\documentclass[11pt,a4paper]{article}
\usepackage{amsmath,amssymb,amsthm}
\usepackage[margin=2.7cm]{geometry}
\usepackage{enumitem}
\usepackage{tikz}
\usepackage[hidelinks]{hyperref}

\makeatletter
\renewcommand{\@seccntformat}[1]{\S\csname the#1\endcsname.\quad}
\renewcommand\section{\@startsection{section}{1}{\z@}%
  {-3.25ex\@plus -1ex \@minus -.2ex}%
  {1.5ex \@plus .2ex}%
  {\normalfont\large\bfseries}}
\makeatother

\theoremstyle{plain}
\newtheorem{lemma}{Lemma}
\newtheorem{proposition}[lemma]{Proposition}
\newtheorem*{theoremA}{Theorem A}
\newtheorem*{theoremB}{Theorem B}
\newtheorem*{corollaryC}{Corollary C}
\newtheorem*{corollaryD}{Corollary D}
\newtheorem*{theoremE}{Theorem E}
\theoremstyle{definition}
\newtheorem{remark}[lemma]{Remark}
\newtheorem{question}[lemma]{Question}

\newcommand{\vp}{\varpi}
\newcommand{\dl}{\mathrm{d}\ell}
\newcommand{\cG}{\mathcal{G}}
\newcommand{\rcod}{\rho_{\mathrm{cod}}}
\newcommand{\scod}{\sigma_{\mathrm{cod}}}
\DeclareMathOperator{\Syl}{Syl}
\DeclareMathOperator{\Irr}{Irr}
\DeclareMathOperator{\ccd}{ccd}
\DeclareMathOperator{\GF}{GF}

\newcommand{\drawF}[1]{%
\begin{tikzpicture}[x=1.45cm,y=1.05cm,
    vtx/.style={circle,draw,fill=white,inner sep=1.4pt}]
  \foreach \i in {1,...,#1} {
    \coordinate (t\i) at (\i,2); \coordinate (m\i) at (\i,1);
    \coordinate (b\i) at (\i,0);
    \draw (t\i)--(m\i)--(b\i);
    \node[font=\scriptsize] at (\i,2.38) {$\i$};
  }
  \pgfmathtruncatemacro{\last}{#1-1}
  \foreach \i in {1,...,\last} {
    \pgfmathtruncatemacro{\j}{\i+1}
    \ifodd\i
      \draw (t\i)--(m\j); \draw (m\i)--(b\j); \draw (b\i)--(t\j);
    \else
      \draw (t\i)--(b\j); \draw (m\i)--(t\j); \draw (b\i)--(m\j);
    \fi
  }
  \foreach \i in {1,...,#1} {
    \node[vtx] at (t\i) {}; \node[vtx] at (m\i) {}; \node[vtx] at (b\i) {};
  }
  \node[font=\small,anchor=east] at (0.6,2) {$t_i$};
  \node[font=\small,anchor=east] at (0.6,1) {$m_i$};
  \node[font=\small,anchor=east] at (0.6,0) {$b_i$};
\end{tikzpicture}}

\title{On Huppert's $\rho$--$\sigma$ conjecture for primes\\ in the orders of elements}
\author{Enrico Jabara \and Thomas Michael Keller}
\date{}

\begin{document}

\maketitle

\begin{abstract}
Let $G$ be a finite group, let $\rho(G)$ denote the number of distinct prime
divisors of $|G|$, and let $\sigma(G)$ be the maximal number of distinct prime
divisors of the order of an element of $G$. Let
$\rho(n)=\max\{\rho(G)\mid G \text{ solvable},\ \sigma(G)=n\}$.
We prove that $\rho(n)\le 4n-1+\pi(2n)$ for all $n\in\mathbb N$, and, with
the help of a computer, that $\rho(n)\le\frac92 n$ for all $n\le 16$. In
particular, $\rho(n)\le\lfloor\frac92 n\rfloor+2$ and
$\rho(n)\le\frac{101}{22}n$ for all $n$, and $\rho(n)\le\frac92 n$ for all
$n\ge 58$. This improves the bound $\rho(n)\le\frac{17}{3}n$ due to Yang, and
it shows that the bound $\rho(n)\le 5n$ stated by Bellotti, Keller and
Trudgian holds, although its original proof contains a gap. Moreover, we show
that $\rho(n)\ge 3n$ for all $n\ge 4$, which proves one half of a conjecture
of the second author stating that $\rho(n)=3n$ for $n\ge 4$. We also derive
a consequence for character codegrees and, conditionally, for arbitrary finite
groups. The
proofs of the main results do not rely on the classification of finite simple
groups.
\end{abstract}

\noindent\textbf{MSC 2020:} 20D60, 20E34, 20C15

\noindent\textbf{Keywords:} element orders; prime divisors;
$\rho$--$\sigma$ problem; $\sigma$-reduced groups; prime graph

\section{Introduction}\label{sec:intro}

All groups considered in this paper are assumed to be finite.

For a positive integer $m$ let $\vp(m)$ denote the set of prime divisors of
$m$, and for $x\in\mathbb R$ let $\pi(x)$ be the number of primes not
exceeding $x$. For a group $G$ we put $\vp(G):=\vp(|G|)$,
$\rho(G):=|\vp(G)|$,
\[
\sigma(G):=\max\bigl\{\,|\vp(o(g))| \bigm| g\in G\,\bigr\},
\]
and, for $n\in\mathbb N$,
\[
\rho(n):=\max\bigl\{\,\rho(G)\bigm| G \text{ solvable},\ \sigma(G)=n\,\bigr\}.
\]
The exact values of $\rho(n)$ are known only for $n\le 4$, and are summarized
below.

\begin{center}
\begin{tabular}{c|cccc}
\hline $n$ & 1 & 2 & 3 & 4\\ \hline
Known values of $\rho(n)$ & 2 & 5 & 8 & 12\\
References & Higman \cite{Hi57} & Zhang \cite{Zh95} & Keller \cite{Ke94}
 & Keller \cite{Ke95b}\\ \hline
\end{tabular}

\medskip
Table 1
\end{center}

As shown in \cite{Zh95}, the function $\rho(n)$ satisfies the quadratic bound
\[
\rho(n)\le\frac{n(n+3)}{2}\qquad\text{for all } n\in\mathbb N.
\]
This was later improved in \cite{Ke95b}, where the sharper estimate
\[
\rho(n)\le\frac{n(n+3)}{2}-2=\frac{(n-1)(n+4)}{2}\qquad\text{for all } n\ge 4
\]
was established. In \cite{Ke95a} it is shown that $\rho(n)\le Cn\ln n$ for a
constant $C>0$, and the first linear bound was obtained in \cite{Ke95c}: there
it is proved that $\rho(n)\le 4n+\pi(\rho(n))$, whence
$\rho(n)\le(4+\varepsilon)n$ for every $\varepsilon>0$ and all sufficiently
large $n$. An explicit linear bound valid for all $n$ is due to Yang, who
proved in \cite{Ya21} that $\rho(n)\le\frac{17}{3}n$. In \cite{BKT23},
Bellotti, Keller and Trudgian state that $\rho(n)\le 5n$ for all $n$; their
argument, however, contains a gap, which we analyze in \S\ref{sec:digression}.

The general problem of determining the optimal constant $K$ such that
\[
\rho(n)\le Kn\qquad\text{for all } n\in\mathbb N
\]
remains open. In analogy with Huppert's classical $\rho$--$\sigma$ conjecture
concerning character degrees, it is conjectured that $\rho(n)=3n$ for all
$n\ge 4$, so that $K=3$; this was first explicitly formulated in 1995 as
Conjecture~9 in \cite{Ke95c}.

The first purpose of this note is to prove the following result.

\begin{theoremA}
For all $n\in\mathbb N$ we have
\[
\rho(n)\le 4n-1+\pi(2n).
\]
In particular, $\rho(n)\le\lfloor\frac92 n\rfloor+2$ for all
$n\in\mathbb N$, and $\rho(n)\le\frac92 n$ for
$n\in\{1,2,44,46,47,\dots,54,56\}$ and for all $n\ge 58$.
\end{theoremA}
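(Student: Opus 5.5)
The plan is to follow the strategy of Keller's linear bound $\rho(n)\le 4n+\pi(\rho(n))$ from \cite{Ke95c}, and to tighten the prime-counting term from $\pi(\rho(n))$ to $\pi(2n)$ while saving one further prime.

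First, reduce to a $\sigma$-reduced solvable group $G$ with $\sigma(G)=n$ and $\rho(G)=\rho(n)$. This means every proper section, or at least every quotient by a nontrivial normal subgroup, has fewer primes in its order. Let $F=F(G)$ be the Fitting subgroup. Since $F$ is nilpotent, $|\vp(F)|\le\sigma(G)=n$. Moreover $G/F$ acts faithfully on $V=F/\Phi(F)$, which is a completely reducible module and a direct sum of modules over distinct characteristic primes. Passing to the semidirect product of $G/\Phi(F)$-type, which does not change $\rho$ or $\sigma$ up to the reduction, we may assume $G=VH$ with $H=G/F$ acting faithfully and completely reducibly on $V$, and $\vp(V)$ disjoint from some large part of $\vp(H)$.

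Second, bound $\rho(H)$ by regular-orbit arguments. Split $\vp(H)$ into primes $p$ for which a Sylow $p$-subgroup (or an element of order $p$) has a regular orbit, or at least a fixed-point-free vector, on the relevant irreducible summand, and the exceptional primes. For the non-exceptional primes, one can build an element whose order is divisible by $p$ together with primes of $F$. The standard counting, using that $\sigma(H)+|\vp(\text{centralizer data})|\le n$, gives a contribution of about $3n$ from $H$ together with $n$ from $F$; this is the $4n$ term. The exceptional primes arise from the failure of regular orbits in semilinear groups $\Gamma(q^m)$. These are essentially the primes $r$ for which a prime-order Galois element or Zsigmondy primes fail, namely Mersenne and Fermat primes and primes dividing $m\le$ dimension-type quantities. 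I would show that each such exceptional prime $r$ satisfies $r\le 2n$. The reason is that an element of order $r$ acting on a module forces an element whose order involves about $r/2$ distinct primes from the action (via a Zsigmondy prime argument on $q^{r}-1$ of $\Gamma$-type). Hence there are at most $\pi(2n)$ exceptional primes. The $-1$ should come from the prime $2$, or from the top-level prime: one exceptional prime is counted already among the $4n$, for instance $2$, which must divide $|G|$ once $\rho(G)$ is large, by Burnside/odd-order-style considerations, or is already counted in $F$.

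The main obstacle is the second step, showing that the exceptional primes are bounded by $2n$ rather than by $\rho(n)$. In \cite{Ke95c} this bound was only obtained in terms of $\rho$, and the gap in \cite{BKT23} analyzed in \S\ref{sec:digression} presumably lives here. I would first try to prove directly that if $r\in\vp(G)$ acts as a field automorphism of order $r$ on $\GF(q^{rk})$, then $G$ contains an element whose order has at least $(r-1)/2$ prime divisors. I would do this by combining a Singer-cycle element with a suitably chosen element centralized by it.

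Finally, the consequences follow arithmetically. For the bound $\lfloor\frac92 n\rfloor+2$, we need $\pi(2n)\le\lfloor n/2\rfloor+3$, which is immediate since among $1,\dots,2n$ only $2$ and the odd numbers can be prime, and $1$ is not. For the bound $\frac92 n$, we need $\pi(2n)\le n/2+1$, which is checked explicitly for the listed small $n$. For $n\ge 58$ it follows from an explicit Rosser–Schoenfeld estimate $\pi(x)<1.26\,x/\ln x$.
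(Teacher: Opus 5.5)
Your proposal has a genuine gap: step~2, where the actual proof has to happen, contains no mechanism that produces a linear bound. Splitting off $F=F(G)$ gives $\rho(F)\le n$, but there is no ``standard counting'' that gives about $3n$ primes for $H=G/F$. $H$ is again a solvable group with $\sigma(H)\le n$, so this is the original problem in disguise. Peeling off one Fitting layer at a time only gives $\rho(G)\le n\,h(G)$, which is not a linear bound.

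What is missing is the device of \cite{Ke95c}. A $\sigma$-reduced group is defined by the Sylow-tower condition of \S\ref{sec:prelim}, not by minimality with respect to sections. For such a group one works with the abelian layers $G^{(i)}/G^{(i+1)}$. Then \cite[Lemma~4]{Ke95c} gives an element $x$ involving all primes of one layer, together with a $\sigma$-reduced subgroup of $C_G(x)$ that carries all later layers. Iterating this along layers of equal parity gives $\nu_2,\nu_3\le n$ (Lemma~\ref{lem:Ke6} and the downward induction of Proposition~\ref{prop:key}). Hence $\rho(G)=n_0+n_1+\nu_2+\nu_3\le 4n$.

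The $-1$ is also structural. The chain lemma (Lemma~\ref{lem:chain}) appends the top layer $K=\dl(G)-1$ to a chain of the opposite parity, so one of $\nu_2,\nu_3$ is at most $n-1$ (Proposition~\ref{prop:key2}). Your explanation via the prime $2$ is not an argument, since nothing forces $2$ to be counted twice.

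Likewise, $\pi(2n)$ does not count ``exceptional'' primes of semilinear actions. It is simply the cost of passing to a Hall subgroup for the primes greater than $2n$, and then to a $\sigma$-reduced subgroup, as in \eqref{eq:reduction}. The hypothesis that all primes exceed $2n$ is what makes \cite[Lemma~4]{Ke95c} applicable. Members outside the first two layers then have rank at least $p>2n$ (Lemma~\ref{lem:rankp}). By the downward induction of Proposition~\ref{prop:key}, this exceeds the number of later members on which they can act faithfully.

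The lemma you propose to prove instead is false. Let $\Gamma=\GF(2^7)^\times\rtimes\mathrm{Gal}(\GF(2^7)/\GF(2))$. Since $2^7-1=127$ is prime, $\Gamma$ is a Frobenius group of order $127\cdot 7$, and its kernel acts fixed-point-freely on $\GF(2^7)$. Hence $\sigma(\GF(2^7)\rtimes\Gamma)=2$, although the Galois automorphism has order $7$ and $(7-1)/2=3$. In particular, such primes need not be at most $2\sigma$.

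Finally, the arithmetic step is not immediate. The parity argument gives only $\pi(2n)\le n$; it is the argument behind $\pi(n)\le n/2+1$ in Question~\ref{q:Gn}. The bound $\pi(2n)\le\lfloor n/2\rfloor+3$ requires the Rosser--Schoenfeld estimate for large $n$ and a direct check for the remaining values.
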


Theorem A is derived from Proposition~\ref{prop:key2}, which shows that
$\rho(G)\le 4\sigma(G)-1$ for every solvable group $G\ne 1$ all of whose
prime divisors exceed $2\sigma(G)$. Its proof relies on the method of
\cite{Ke95c}, in the form of a chain lemma (Lemma~\ref{lem:chain}), and we
take the opportunity to correct some minor slips in \cite[Lemma~5]{Ke95c}
(see Remark~\ref{rem:slips}). For $n\ge 8$, Theorem A is better than the
previously known bounds (see Table~2 and Remark~\ref{rem:largen} in
\S\ref{sec:proofs}).

For small $n$ we obtain better bounds by combining these methods with lower
bounds for the ranks of the Sylow subgroups of $\sigma$-reduced groups. This
leads to a finite optimization problem, which we solve by computer (see
\S\ref{sec:small}).

\begin{theoremB}[computer-assisted]
	For $6\le n\le 17$, the following bounds hold.
	\begin{center}
		\begin{tabular}{c|cccccccccccc}
			\hline $n$ & 6 & 7 & 8 & 9 & 10 & 11 & 12 & 13 & 14 & 15 & 16 & 17\\ \hline
			$\rho(n)\le$ & 24 & 29 & 33 & 38 & 43 & 48 & 54 & 58 & 63 & 67 & 72 & 77\\
			$\lfloor 9n/2\rfloor$ & 27 & 31 & 36 & 40 & 45 & 49 & 54 & 58 & 63 & 67 & 72 & 76\\ \hline
		\end{tabular}
	\end{center}
\end{theoremB}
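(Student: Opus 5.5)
We plan to reduce Theorem B to a finite integer optimization problem, which a computer then solves for each $n$ with $6\le n\le 17$. First we pass to a minimal counterexample. Fix $n$ and let $G$ be solvable with $\sigma(G)=n$ and $\rho(G)=\rho(n)$. Replacing $G$ by a suitable section, we may assume that $G$ is $\sigma$-reduced: no proper section has the same set of prime divisors, and in particular $\Phi(G)=1$ and the Fitting subgroup $F=F(G)$ is a direct product of elementary abelian minimal normal subgroups on which $G/F$ acts faithfully.

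Next we split the primes of $\vp(G)$ into two classes. The \emph{large} primes are those exceeding $2n$. For the set $L$ of large primes, take a Hall $L$-subgroup $H$. It satisfies $\sigma(H)\le n$, and all its prime divisors exceed $2\sigma(H)$, so Proposition~\ref{prop:key2} applies and gives
\[
|L|\le 4\sigma(H)-1 .
\]
The \emph{small} primes $p\le 2n$ number at most $\pi(2n)$. This crude split alone yields Theorem A. To do better, we refine the count using two further ingredients.
\begin{enumerate}[label=(\roman*)]
\item The chain lemma (Lemma~\ref{lem:chain}) is applied jointly to large and small primes. It bounds how many primes can occupy each layer of the Fitting series, in terms of how many primes a single element can simultaneously ``see''.
\item Lower bounds on the ranks of Sylow subgroups in $\sigma$-reduced groups are used. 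A small prime $p$ dividing $|G/F|$ forces a module of large dimension over some $\GF(q)$. By Zsigmondy/orbit arguments, this in turn forces regular orbits, and hence elements whose orders involve many primes.
\end{enumerate}

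We then encode the data as integer variables. The variables are the numbers $a_i$ of primes of each size class in each Fitting layer, together with the ranks involved. The constraints come from the lemmas: the chain inequality, $\sigma\le n$ for elements built layer by layer, and the rank bounds for each small prime $2,3,5,\dots$ up to $2n$. The resulting integer program has objective $\sum a_i$. Maximizing it by exhaustive search, or by an ILP solver, gives the tabulated values. We would verify the output by an independent enumeration.

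The main obstacle is deriving sufficiently sharp, correct constraints for the small primes. Specifically, we must show that each prime $p\le 2n$ lying above the bottom layer costs enough in $\sigma$; this is exactly where the gap in \cite{BKT23} arose. Here we would first try to prove that in a $\sigma$-reduced group, each prime in $\vp(G/F)$ with $p\le 2n$ either contributes to an element order together with a regular-orbit prime of $F$, or else requires a Sylow rank at least $2$. Those bounds would then be fed into the program.
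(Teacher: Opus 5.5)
Your proposal is not yet a proof. The numbers in the table are outputs of a specific finite optimization, and you never state its constraints.

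\textbf{The gap.} You keep all primes, including $2,3,5,\dots$, and plan to impose ``rank bounds for each small prime.'' This rests on a lemma you only intend to try to prove. Even that lemma (Sylow rank at least $2$, or a regular-orbit partner in $F$) is far too weak for the tools you want to feed it into:
\begin{itemize}
\item Lemma~4 of \cite{Ke95c}, and hence Lemma~\ref{lem:chain}, needs the Sylow subgroups in the layer where a chain link is taken to have rank larger than the number of later members of $M_G$ they act on faithfully.
\item Lemma~\ref{lem:counting} needs $e(U_k)\ge\prod_{l>k}(s_l+1)$.
\end{itemize}
All such ranks in the paper come from the size of the \emph{smallest} prime dividing $|G|$ (Lemmas~\ref{lem:rankp}, \ref{lem:ranks}, \ref{lem:rank3}). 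If $2$ or $3$ divides $|G|$, these bounds collapse. The chain lemma's hypothesis $\rho(G^{(s_1+1)})\le p-1$ then becomes essentially void, so the lemma cannot be applied ``jointly to large and small primes.'' Nor does a small prime in $\vp(G/F)$ force a large module in general (take $C_2$ acting on $C_3$). You also formulate no regular-orbit statement valid in this generality. So the claim that maximizing your program ``gives the tabulated values'' is unsupported.

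\textbf{The missing idea.} The small primes are never controlled; they are traded off. For each $n$ one does the following.
\begin{itemize}
\item Fix a threshold prime $p$ with $n<p\le p_0$, where $p_0$ is the least prime exceeding $2n$.
\item Discard all primes below $p$ at cost $\pi(p-1)$, via \eqref{eq:reduction}.
\item Bound $\rho(G)$ only for $\sigma$-reduced $G$ with $\sigma(G)\le n$ all of whose prime divisors are at least $p$.
\end{itemize}
For such $G$ one has conditions (i)--(iv) of Proposition~\ref{prop:search}:
\begin{itemize}
\item $n_i\le n$ for every layer;
\item the derived length bounds of Lemma~\ref{lem:dl};
\item the chain lemma, but only for chains with $\sum_{i>s_1}n_i\le p-1$, since Proposition~\ref{prop:key} is lost when $p\le 2n$;
\item Lemma~\ref{lem:counting} with the explicit rank bounds $b_2=p$, $b_3=\mathfrak q(p)$, $b_4=p\,\mathfrak q(p)$ and $b_k=\max\{p\,\mathfrak q(p),\lceil p^{(k-1)/2}\rceil\}$, raised to $pp^+$ for $k\ge 3$ when $n_0=n$.
\end{itemize}
The maximum $\beta(n,p)$ of $\sum n_i$ under these conditions is computed by dynamic programming. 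The table lists $\beta(n,p)+\pi(p-1)$ for the best $p$; for instance, $n=9$ and $p=11$ give $34+4=38$.

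Your large/small split is the case $p=p_0$ using only Proposition~\ref{prop:key2}, which yields just Theorem~A. The improvement comes from the rank and derived-length constraints and, for most $n$, from taking $p<2n$.
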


For $n=6$, Theorem B improves the bound $\rho(6)\le 25$ of \cite{Ke95b}. Together with Table~1 and the bound $\rho(5)\le 18$ of \cite{Ke95b}, Theorems A and B show that $\rho(n)\le\frac92 n$ for all $n$, except possibly for $17\le n\le 43$ and $n\in\{45,55,57\}$; for these $n$, our bound exceeds $\lfloor\frac92 n\rfloor$ by one or two (see Remark~\ref{rem:range}, where we also discuss these remaining cases). Moreover, these results imply that $\rho(n)\le\frac{101}{22}n$ for all $n\in\mathbb N$, the worst case being $n=22$, where Theorem A gives $\rho(22)\le 101$; Theorem A together with Table~1 gives $\rho(n)\le\frac{33}{7}n$.

As observed by Moret\'o in \cite{Mo21}, obtaining a good estimate for the constant $K$ has valuable consequences in related problems. In particular, let
$\chi\in\Irr(G)$ be an irreducible character of a group $G$, and define its
codegree as $\ccd(\chi)=|G:\ker\chi|/\chi(1)$. The associated invariants are
\[
\scod(G)=\max_{\chi\in\Irr(G)}\bigl|\vp(\ccd(\chi))\bigr|,\qquad
\rcod(G)=\Bigl|\bigcup_{\chi\in\Irr(G)}\vp(\ccd(\chi))\Bigr|.
\]
In \cite{Mo21}, Moret\'o proves that $\rcod(G)\le\frac32\scod(G)$ if $G$ has
trivial solvable radical, and he derives that for every group $G$
\[
\rcod(G)\le\Bigl(K+\frac32\Bigr)\scod(G),
\]
where $K$ is any real number such that $\rho(n)\le Kn$ for all
$n\in\mathbb N$. Combining this with Theorems A and B, we immediately obtain
the following consequence.

\begin{corollaryC}
For every group $G$ we have $\rcod(G)\le\frac{67}{11}\,\scod(G)$.
\end{corollaryC}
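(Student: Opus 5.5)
Corollary C is a direct combination of Moret\'o's inequality with the linear bound $\rho(n)\le Kn$ coming from Theorems A and B, so the plan is to check the admissible constant and then substitute. By \cite{Mo21}, every group $G$ satisfies
\[
\rcod(G)\le\Bigl(K+\frac32\Bigr)\scod(G)
\]
for any real $K$ with $\rho(n)\le Kn$ for all $n\in\mathbb N$. So it suffices to show that $K=\frac{101}{22}$ is admissible, since $\frac{101}{22}+\frac{3}{2}=\frac{101+33}{22}=\frac{134}{22}=\frac{67}{11}$.

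To verify $\rho(n)\le\frac{101}{22}n$ for every $n$, I split into ranges of $n$:
\begin{itemize}[leftmargin=2em]
\item For $n\le 4$, the exact values in Table~1 give $\rho(n)/n\in\{2,\frac52,\frac83,3\}$, all below $\frac{101}{22}\approx 4.59$. For $n=5$, the bound $\rho(5)\le 18$ of \cite{Ke95b} gives $\frac{18}{5}$.
\item For $6\le n\le 16$, Theorem B gives $\rho(n)\le\frac92 n$ directly. For $n=17$, Theorem B gives $77\le\frac{101}{22}\cdot 17\approx 78.05$.
\item For $n\ge 58$ and for $n\in\{44,46,\dots,54,56\}$, Theorem A gives $\rho(n)\le\frac92n$.
\item For the remaining values $18\le n\le 43$ and $n\in\{45,55,57\}$, I use $\rho(n)\le 4n-1+\pi(2n)$ from Theorem A and check $(4n-1+\pi(2n))/n\le\frac{101}{22}$ case by case.
\end{itemize}

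The only real work is this finite check in the last range, which is routine. The equivalent condition is $\pi(2n)-1\le\frac{13}{22}n$, and the extremal case is $n=22$: there $\pi(44)=14$, so $\rho(22)\le 101$ with equality in the ratio. For example, at $n=21$ we have $\pi(42)=13$, giving $96/21\approx 4.571$. At $n=43$ we have $\pi(86)=23$, giving $194/43\approx 4.51$. For $n\ge 23$ the ratio is controlled because $\pi(2n)$ grows more slowly than $\frac{13}{22}n$.

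Once every $n$ is covered, $K=\frac{101}{22}$ is admissible, and Moret\'o's inequality yields $\rcod(G)\le\frac{67}{11}\scod(G)$ for every group $G$.
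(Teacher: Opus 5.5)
Your proposal is correct and follows the paper's proof. You use Moret\'o's inequality with $K=\frac{101}{22}$, Table~1 and $\rho(5)\le 18$ for $n\le 5$, Theorem~B for $6\le n\le 17$, and Theorem~A together with a finite check (extremal at $n=22$) for the remaining $n$. The only difference is that you invoke the $\frac92 n$ range already stated in Theorem~A, whereas the paper uses $\pi(2n)<\frac n2$ for $n\ge 76$ and checks $18\le n\le 75$ directly. Your remark that $\pi(2n)$ ``grows more slowly'' is only a heuristic. The explicit inequality $22(\pi(2n)-1)\le 13n$ does hold for every $n$ in your remaining list, so nothing is missing once that check is carried out.
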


\noindent Without Theorem B, one obtains the constant $\frac{87}{14}$ in the
same way.

The function $\rho_g(n)$ is defined analogously to $\rho(n)$, except that
arbitrary finite groups are considered, not necessarily solvable. Following
the approach of \cite{Mo06} and \cite{HY21} (see also
\cite[Corollary~1]{BKT23}), we obtain the following conditional result. Its
hypothesis, and its relation to the results of \cite{HY21} and \cite{BKT23},
are discussed in Remark~\ref{rem:HY}.

\begin{corollaryD}
Suppose that $\rho(T)\le 8\bigl(\sigma(T)+1\bigr)^3+1$ for every direct
product $T$ of nonabelian simple groups. Then $\rho_g(n)\le 70\,n^4$ for
every $n\in\mathbb N$.
\end{corollaryD}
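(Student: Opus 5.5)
The statement to prove is Corollary D. The plan is to reduce to the solvable radical plus a bounded piece, in the spirit of \cite{Mo06} and \cite{HY21}.

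Let $G$ be finite with $\sigma(G)=n$, and let $R$ be its solvable radical. Then $G/R$ has trivial solvable radical, so its socle $T=\mathrm{Soc}(G/R)$ is a direct product of nonabelian simple groups. Moreover $G/R$ embeds in $\mathrm{Aut}(T)$. This gives
\[
\rho(G)\le\rho(R)+\rho(T)+\rho\bigl(\mathrm{Out}\text{-part}\bigr),
\]
where the last term counts the primes dividing $|G/R:T|$ that do not already divide $|T|$.

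The three contributions are handled as follows.
\begin{enumerate}[label=(\roman*)]
\item Since $R$ is solvable with $\sigma(R)\le n$ and $\rho(\cdot)$ is monotone in $n$, Theorem A gives $\rho(R)\le 4n-1+\pi(2n)\le 5n$.
\item Sections do not increase $\sigma$, so $\sigma(T)\le n$. The hypothesis then gives $\rho(T)\le 8(n+1)^3+1$.
\item For the quotient $(G/R)/T\le \mathrm{Out}(T)$, write $T=\prod_i T_i^{k_i}$. Then $\mathrm{Out}(T)$ is built from $\prod_i\mathrm{Out}(T_i)\wr S_{k_i}$. Primes dividing the permutation part $S_{k_i}$ but not $|T|$ are controlled as in \cite{Mo06}: a $p$-cycle in the permutation action combined with elements of $T$ forces elements whose orders carry extra primes. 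Alternatively, the image in $S_k$ is a permutation group whose $\sigma$ is at most $n$. A solvable/permutation-group estimate then bounds its number of primes linearly or quadratically in $n$, for instance by $\pi(k)$ together with the relation $k\le$ something in terms of $n$ coming from $\sigma(T)\ge$ the number of simple factor types. The outer automorphism groups of the simple factors $T_i$ are solvable by Schreier's conjecture. Here, however, I would prefer to use only the structure that $\mathrm{Out}(T_i)$ has order divisible by few new primes (field automorphisms contribute primes $p$ with $p\mid f$), and these feed into a solvable section of $G$ with $\sigma\le n$. Applying Theorem A again bounds them by $5n$.
\end{enumerate}

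Summing the three bounds yields something like $8(n+1)^3+1+10n+O(n^2)$. One then checks that this is at most $70n^4$ for all $n\ge1$. The worst case is $n=1$, where $8\cdot 8+1+\dots\le 70$ must be verified. If it fails, I would treat small $n$ separately using known values from Table~1.

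The main obstacle is step (iii). One must bound the primes coming from the permutation action on the simple factors, which may be numerous, without invoking the classification. I would follow \cite[Corollary~1]{BKT23} and \cite{HY21} closely, where this bookkeeping yields the $n^4$ shape. The numerical constant $70$ is then presumably obtained by inserting the improved linear constant from Theorems A and B in place of the constant used there.
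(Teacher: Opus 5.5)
There is a genuine gap in step (iii), and it is the step that carries the whole difficulty. The quotient $(G/R)/\mathrm{Soc}(G/R)$ embeds in $\mathrm{Out}(T)$. The part coming from $\prod_i\mathrm{Out}(T_i)^{k_i}$ is solvable, but the image in $\prod_i S_{k_i}$ can be an arbitrary, possibly nonsolvable, group of unbounded degree. For instance, $A_5\wr\mathrm{PSL}_2(7)$, with $\mathrm{PSL}_2(7)$ acting on $7$ points, has trivial solvable radical, socle $A_5^7$, and quotient $\mathrm{PSL}_2(7)$. That quotient is nonsolvable and contributes the new prime $7$, and iterated wreath products give arbitrarily many such levels. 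Neither Theorem A nor a $\pi(k)$-type estimate applies here: $k$ is not bounded in terms of $n$, since $\sigma(A_5^k)=3$ for all $k\ge 3$. Bounding the primes of the permutation image is therefore the original problem again.

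The missing idea is Moret\'o's iterated series $1=S_0\le R_1<S_1<\dots<R_m<S_m\le R_{m+1}=\Gamma$, alternating solvable radicals and socles, together with the argument that bounds its length $m$. For each $i$ there is a prime $q_i$ dividing $|S_i/R_i|$ but not $|\Gamma/S_i|\,|R_i|$, and there is a solvable subgroup $H\le\Gamma$ whose order is divisible by all the $q_i$. These primes are pairwise distinct, so $m\le\rho(H)\le\kappa_n$. Summing over the factors of the series then gives $\rho(\Gamma)\le m(\xi_n+\kappa_n)+\kappa_n\le\kappa_n(\xi_n+\kappa_n+1)$. The fourth power is exactly the number of levels (linear in $n$) times the cubic bound for each socle, which is why the hypothesis is stated for direct products of simple groups. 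Your cubic total $8(n+1)^3+O(n^2)$ is a symptom of collapsing the tower to a single level.

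The numerical part also changes once the correct shape $\kappa_n\bigl(8(n+1)^3+\kappa_n+2\bigr)$ is in place. With the crude choice $\kappa_n=5n$ this exceeds $70n^4$ for small $n$; for example, $n=2$ gives $2280>1120$. You therefore need the sharp values: $\rho(2)=5$, $\rho(3)=8$, $\rho(4)=12$, $\rho(5)\le 18$, Theorem B for $6\le n\le 17$, and $\kappa_n\le\frac{101}{22}n$ for $n\ge 8$. With these, $\kappa_n\bigl(8(n+1)^3+\kappa_n+2\bigr)n^{-4}\le 69.7$ for all $n\ge 2$, with the maximum at $n=2$. For $n=1$ the formula gives $2\cdot 68=136>70$. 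Falling back on Table~1 does not help there, since it concerns only solvable groups; you need $\rho_g(1)=4$ from the results of Higman and Brandl.
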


Concerning lower bounds, by \cite[Remark~1.11]{Ke94} we have
$\rho(n)\ge 2n$ for all $n$, and $\rho(n)\ge 3n$ whenever $n$ is divisible
by $4$; the latter is obtained by taking direct products of groups as
constructed in \cite[Example~1.9]{Ke94}. As observed in \cite[\S3]{BKT23},
in order to obtain $\rho(n)\ge 3n$ for all $n\ge 4$ it would therefore
suffice to construct suitable groups for $n=5,6,7$. Using the
characterization of the prime graphs of solvable groups due to Gruber,
Keller, Lewis, Naughton and Strasser \cite{GKLNS15}, we show that the
conjectured value $3n$ is indeed a lower bound for all $n\ge 4$; this proves
one half of \cite[Conjecture~9]{Ke95c}.

\begin{theoremE}
For every $n\ge 4$ we have $\rho(n)\ge 3n$. In particular,
$\rho(n)\ge 3n-1$ for all $n\in\mathbb N$, with equality for $n\le 3$.
\end{theoremE}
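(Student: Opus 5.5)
The plan is to reduce Theorem E to a few explicit constructions and glue them together with direct products. Take solvable groups $G_1,G_2$ with $\vp(G_1)\cap\vp(G_2)=\emptyset$, which can always be arranged by choosing fresh primes. Then the order of an element $(g_1,g_2)$ of $G_1\times G_2$ has prime set $\vp(o(g_1))\cup\vp(o(g_2))$, a disjoint union. Hence
\[
\sigma(G_1\times G_2)=\sigma(G_1)+\sigma(G_2),\qquad \rho(G_1\times G_2)=\rho(G_1)+\rho(G_2).
\]
Therefore $\rho(a+b)\ge\rho(a)+\rho(b)$ holds whenever both values are realised with freely choosable primes. By \cite[Example~1.9]{Ke94} there is a group with $\sigma=4$ and $\rho=12$, and products of copies give $\rho(4k)\ge 12k$. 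Every $n\ge 4$ can be written as $n=4k+r$ with $k\ge 0$ and $r\in\{4,5,6,7\}$. So it suffices to construct solvable groups with $(\sigma,\rho)=(5,15),(6,18),(7,21)$, whose prime sets can be chosen disjoint from any prescribed finite set.

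For these constructions we use the characterisation in \cite{GKLNS15}. A graph is the prime graph of a solvable group if and only if its complement is triangle-free and $3$-colourable. In that case a solvable group realising the graph exists, and the vertex labels can be taken to be suitable primes, with enough freedom to avoid finitely many given primes. For a solvable group, $\sigma(G)$ is at least the clique number of the prime graph. It equals the clique number if every clique is realised by a single element, which holds when the group from \cite{GKLNS15} is built so that commuting elements of coprime orders exist along each clique. Since $\rho$ is the number of vertices, the task becomes combinatorial. For $n=5,6,7$ we must exhibit a graph on $3n$ vertices whose complement is triangle-free and $3$-colourable, and whose clique number is exactly $n$.

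The natural candidate is the graph drawn by the macro \verb|\drawF| in the preamble: the complement of a triangle-free $3$-partite graph on columns $t_i,m_i,b_i$, $i=1,\dots,n$. In the complement, each column is a path $t_i$–$m_i$–$b_i$, and consecutive columns are joined by the displayed perfect matchings, which alternate in type. The checks are as follows.
\begin{itemize}[nosep]
\item \emph{Triangle-freeness.} Edges go only between adjacent columns or within a path, and each matching is chosen to avoid closing a triangle.
\item \emph{3-colourability.} This should follow from a bipartite-like structure along the columns, or be checked by hand.
\item \emph{Independence number $n$ in the complement.} An independent set in the complement is a clique in the prime graph. Each column contributes at most... one needs that a maximum independent set uses roughly one vertex per column. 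Some columns could contribute two vertices ($t_i,b_i$), and the matchings must punish this by forcing a neighbouring column to contribute zero.
\end{itemize}
This gives $\sigma=n$ and $\rho=3n$ for $n=5,6,7$, and then the product argument yields the general bound.

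The main obstacle is twofold. First, we must verify that the independence number of this complement is exactly $n$ for $n=5,6,7$. This is a finite check, but it must be done carefully, perhaps by a transfer argument along the columns. Second, we must ensure that $\sigma$ of the realising group equals the clique number, rather than merely bounding it. For the second point, I would inspect the construction in \cite{GKLNS15}, which is a semidirect product built along the $3$-colouring, and argue that any element's prime set is a clique.

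For the final sentence of Theorem E, Table~1 gives $\rho(n)=3n-1$ for $n\le 3$, and the case $n\ge4$ gives $\rho(n)\ge 3n>3n-1$.
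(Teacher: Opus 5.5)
Your reduction to $n\in\{4,5,6,7\}$ by direct products of groups of coprime orders is fine. It is the alternative recorded in Remark~\ref{rem:lower}(a); the paper instead treats every $n\ge 4$ at once with the same family of graphs.

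\textbf{The graph is wrong.} You take the complement of the prime graph to consist only of the drawn edges: the paths $t_im_ib_i$ and the matchings between consecutive columns. You say explicitly that edges occur only within or between adjacent columns. That graph has independence number well above $n$. In it, no $t_a$ with $a$ odd has a neighbour in $B=\{b_1,\dots,b_n\}$, so $B\cup\{t_a\mid a\text{ odd}\}$ is independent of size $n+\lceil n/2\rceil$. For $n=5$ this is the set $\{b_1,\dots,b_5,t_1,t_3,t_5\}$. Matchings between neighbouring columns cannot stop two columns at distance at least $2$ from both contributing $\{t_i,b_i\}$. So the penalisation you hope for in your third bullet does not happen, and nothing bounds $\sigma$ by $n$.

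What is missing is the family of edges $t_ib_j$ for all $|i-j|\ge 2$, which the picture omits. With them, $t_a\sim b_c$ holds if and only if $|a-c|\ge 2$, or $a$ is even and $|a-c|=1$. Both properties then need a real argument:
\begin{itemize}
\item \emph{Triangle-freeness.} This is no longer a local check, since edges now join distant columns. One checks triangles $t_a,m_c,b_e$ according to the parity of $a$.
\item \emph{$\alpha=n$.} Suppose $S\cap C_d=\{t_d,b_d\}$ with $d$ even. Then $t_d$ is adjacent to every $b_c$ with $c\ne d$, so $S\cap B=\{b_d\}$. Moreover $S\cap T\subseteq\{t_{d-1},t_d,t_{d+1}\}$ and $b_d\sim m_{d\pm1}$. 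Since $t_{d+1}\sim m_{d+2}$, one of $C_{d+1},C_{d+2}$ misses $S$ (or, by $n\ge4$, one of $C_{d-1},C_{d-2}$). The case $d$ odd is symmetric with $t$ and $b$ interchanged.
\end{itemize}
This is Lemma~\ref{lem:Fn}, and it is exactly the part your proposal leaves unfinished.

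\textbf{The inequality between $\sigma$ and the clique number is reversed.} This leads you to an obstacle that does not exist. For every group, the primes dividing $o(g)$ are pairwise adjacent in $\Gamma_G$, because $g^{o(g)/pq}$ has order $pq$. Hence $\sigma(G)\le\alpha(\overline{\Gamma}_G)$ (Lemma~\ref{lem:alpha}). The reverse inequality fails in general: pairwise adjacency of primes need not be realised by a single element. Only the upper bound is needed. If the group $G$ from \cite{GKLNS15} has $\sigma(G)=m<n$, take $G\times C$ with $C$ cyclic of order a product of $n-m$ primes not dividing $|G|$. This group is solvable with $\sigma=n$ and $\rho\ge 3n$. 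So you do not need to analyse the construction in \cite{GKLNS15} to force equality.

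Finally, $3$-colourability is immediate, since no edge joins two vertices of the same row $T$, $M$ or $B$.
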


Together with Theorem A, we obtain $3n\le\rho(n)\le 4n-1+\pi(2n)$ for all
$n\ge 4$. Theorem E is proved in \S\ref{sec:lower} by means of an explicit
family of graphs (see Figure~\ref{fig:F}).

We point out that the proofs of Theorems A, B and E do not rely on the
classification of finite simple groups; Corollaries C and D use results on
nonsolvable groups from \cite{Mo06}, \cite{Mo21} and \cite{HY21}.

\section{Notation and preliminary results}\label{sec:prelim}

We recall some notions from \cite{Ke95c} (see also \cite{Ke94}). A group $G$
is called \emph{$\sigma$-reduced} if $G=P_1\cdots P_m$, where
$P_i\in\Syl_{p_i}(G)$ for pairwise distinct primes $p_1,\dots,p_m$, such that
$P_i\le N_G(P_j)$ for $i<j$ and each $P_i$ is isomorphic to a principal factor
of $G$; we put $M_G:=\{P_1,\dots,P_m\}$. For groups $U$ and $X$ we write
$U\lesssim X$ if $U$ is isomorphic to a subgroup of $X$. We shall use the
following facts \cite[Lemma~3]{Ke95c}.

\begin{enumerate}[label=(\alph*)]
\item Every solvable group $G$ possesses a $\sigma$-reduced subgroup $U$ with
$\vp(U)=\vp(G)$; clearly $\sigma(U)\le\sigma(G)$.
\item If $G$ is $\sigma$-reduced and $P,Q\in M_G$ with $P\le N_G(Q)$, then
$P$ acts either faithfully or trivially on $Q$.
\item If $G$ is $\sigma$-reduced, then
\[
G^{(i)}/G^{(i+1)}\cong P_{i,1}\times\cdots\times P_{i,n_i}
\qquad(0\le i<\dl(G)),
\]
where $P_{i,l}\in M_G$ for all $i,l$. Furthermore, if $0\le j<k<\dl(G)$ and
$P\in M_G$ with $P\lesssim G^{(k)}/G^{(k+1)}$, then there exists a $Q\in M_G$
with $Q\lesssim G^{(j)}/G^{(j+1)}$ such that $Q$ acts faithfully on $P$.
\end{enumerate}

Let $G$ be $\sigma$-reduced. By (c), all Sylow subgroups of $G$ are
elementary abelian, and the sets $\vp(G^{(i)}/G^{(i+1)})$,
$0\le i<\dl(G)$, are pairwise disjoint. Following \cite{Ke95c}, we write
\[
n_i=n_i(G):=\bigl|\vp(G^{(i)}/G^{(i+1)})\bigr|\qquad(i\ge 0),
\]
so that $n_i=0$ for $i\ge\dl(G)$, and we put
\[
\widehat n_i:=\sum_{j\ge i}n_j=\rho(G^{(i)}),\qquad
\nu_i:=\sum_{t\ge 0}n_{i+2t}\qquad(i\ge 0).
\]
Thus $\widehat n_i=\nu_i+\nu_{i+1}$ and $\rho(G)=n_0+n_1+\nu_2+\nu_3$. As
$G^{(i)}/G^{(i+1)}$ is abelian, it contains an element whose order is
divisible by all primes in $\vp(G^{(i)}/G^{(i+1)})$, and so does every
preimage of this element in $G$; hence $n_i\le\sigma(G)$ for all $i$. If
$\sigma(G)=n$, we put $\lambda_i:=n-n_i\ge 0$. Moreover, $G/G'$ is cyclic of
squarefree order: if $p\in\vp(G/G')$ and $P\in\Syl_p(G)$, then, as $P$ is
isomorphic to a principal factor of $G$, a chief series of $G$ through $G'$
has exactly one factor of $p$-power order, and this factor lies above $G'$;
thus $|P|=p$.

For a group $X$ all of whose Sylow subgroups are elementary abelian we put,
as in \cite{Ke95c},
\[
e(X):=\min\bigl\{\dim_{\GF(p)}P\bigm| p \text{ prime},\ 1<P\in\Syl_p(X)\bigr\}.
\]
Let $G$ be $\sigma$-reduced. We say that $P\in M_G$ \emph{lies in the $i$-th
layer} of $G$ if $P\lesssim G^{(i)}/G^{(i+1)}$; thus $n_i$ is the number of
members of $M_G$ in the $i$-th layer, and for $0\le i<\dl(G)$ we put
\[
e_i:=\min\bigl\{e(P)\bigm| P\in M_G \text{ lies in the $i$-th layer}\bigr\}.
\]
If $P,P'\in M_G$ lie in the $i$-th and the $j$-th layer, respectively, where
$i\le j$, then $P$ normalizes $P'$: either $P\le N_G(P')$ by definition, or
$P'\le N_G(P)$, and then $[P,P']\le P\cap G^{(j)}=1$ if $i<j$, while for
$i=j$ we have $[P,P']\le PP'\cap G^{(i+1)}=1$. In particular, members in the
same layer commute.

For a solvable group $X$ we denote by $h(X)$ its Fitting height, and by
$h^*(X)\le h(X)$ the invariant introduced by Turull \cite{Tu84} by means of
reduced Fitting chains.

For $N\in\mathbb N_0$ let $\cG_N$ denote the class of solvable groups $G$
such that every prime divisor of $|G|$ is greater than $N$, and for
$n\in\mathbb N$ put
\[
B_N(n):=\max\bigl\{\rho(H)\bigm| H\in\cG_N\ \sigma\text{-reduced},\
\sigma(H)\le n\bigr\}.
\]
Let $G$ be a solvable group with $\sigma(G)=n$, and let $K$ be a Hall
subgroup of $G$ for the set of primes greater than $N$. By (a), $K$
possesses a $\sigma$-reduced subgroup $H$ with $\vp(H)=\vp(K)$; clearly
$H\in\cG_N$ and $\sigma(H)\le n$. As at most $\pi(N)$ primes divide
$|G:K|$, we conclude that
\begin{equation}\label{eq:reduction}
\rho(n)\le B_N(n)+\pi(N)\qquad\text{for all } n,N\in\mathbb N.
\end{equation}

\begin{lemma}\label{lem:centralizer}
Let $\Gamma$ be a group with $\sigma(\Gamma)=n$, and let $x\in\Gamma$. If
$h\in C_\Gamma(x)$ and $(o(h),o(x))=1$, then
$|\vp(o(h))|\le n-|\vp(o(x))|$. In particular,
$\sigma(H)\le n-|\vp(o(x))|$ for every subgroup $H\le C_\Gamma(x)$ with
$(|H|,o(x))=1$.
\end{lemma}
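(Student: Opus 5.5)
Since $h$ and $x$ commute and have coprime orders, the product $g:=hx$ satisfies $o(g)=o(h)\,o(x)$. To see this, note that $\langle h,x\rangle=\langle h\rangle\times\langle x\rangle$ is abelian. The intersection $\langle h\rangle\cap\langle x\rangle$ has order dividing $\gcd(o(h),o(x))=1$, so $\langle h,x\rangle$ is the internal direct product of two cyclic groups of coprime orders, hence cyclic of order $o(h)o(x)$. It remains to check that $g$ generates it. From $g^{o(h)}=x^{o(h)}$ and the coprimality, some power of $g$ equals $x$. Similarly some power of $g$ equals $h$. Thus $\langle g\rangle=\langle h,x\rangle$ and $o(g)=o(h)o(x)$.

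Consequently $\vp(o(g))=\vp(o(h))\cup\vp(o(x))$, and this union is disjoint because $(o(h),o(x))=1$. By the definition of $\sigma(\Gamma)=n$ as a maximum over all elements, $|\vp(o(h))|+|\vp(o(x))|=|\vp(o(g))|\le n$. This gives $|\vp(o(h))|\le n-|\vp(o(x))|$, which is the first assertion.

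For the second assertion, let $H\le C_\Gamma(x)$ with $(|H|,o(x))=1$. Each $h\in H$ lies in $C_\Gamma(x)$, and $o(h)$ divides $|H|$, so $(o(h),o(x))=1$. The first part then gives $|\vp(o(h))|\le n-|\vp(o(x))|$ for every $h\in H$. Taking the maximum over $h\in H$ yields $\sigma(H)\le n-|\vp(o(x))|$.

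The argument is elementary, and the only point needing care is the identity $o(hx)=o(h)o(x)$ for commuting elements of coprime order. That identity is standard and is verified in the first paragraph.
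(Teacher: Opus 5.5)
Your proposal is correct and follows the paper's proof: both rest on $o(xh)=o(x)o(h)$ for commuting elements of coprime order, so that $|\vp(o(x))|+|\vp(o(h))|=|\vp(o(xh))|\le n$. You additionally verify that identity and spell out the ``in particular'' part, both of which the paper leaves implicit.
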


\begin{proof}
As $x$ and $h$ commute and have coprime orders, we have $o(xh)=o(x)o(h)$,
and hence $|\vp(o(x))|+|\vp(o(h))|=|\vp(o(xh))|\le n$.
\end{proof}

\begin{lemma}\label{lem:dl}
Let $\Gamma$ be a $\sigma$-reduced group with $\sigma(\Gamma)=n$. Then
\[
\dl(\Gamma)\le n+1+\lambda_0\qquad\text{and}\qquad
\dl(\Gamma)\le n+2+\lambda_1 .
\]
In particular:
\begin{enumerate}[label=(\arabic*)]
\item If $n_0=n$, then $\dl(\Gamma)\le n+1$.
\item If $n_0=n-1$, then $\dl(\Gamma)\le n+2$.
\item If $n_0=n-2$, then $\dl(\Gamma)\le n+3$.
\item If $n_1=n$, then $\dl(\Gamma)\le n+2$.
\item If $n_1=n-1$, then $\dl(\Gamma)\le n+3$.
\end{enumerate}
Moreover, $\dl(\Gamma)\le n+3$ whenever $\lambda_0+\lambda_1\le 3$.
\end{lemma}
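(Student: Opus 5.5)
Write $d:=\dl(\Gamma)$. The plan is to pick an element of large prime support in an upper layer, take a chain of members of $M_\Gamma$ through the lower layers, and use Lemma~\ref{lem:centralizer} to bound the length of that chain. First, by fact (c) and an easy induction, for every $P\in M_\Gamma$ lying in layer $k$ there is a chain $Q_0,Q_1,\dots,Q_k=P$ with $Q_j$ in layer $j$ and $Q_j$ acting faithfully on $Q_{j+1}$. (Iterating (c) with $j=k-1$ gives $Q_{k-1}$ acting faithfully on $P$, then $Q_{k-2}$ on $Q_{k-1}$, and so on.) Such a chain, as in the solvable-group-acting-faithfully setting, yields an element whose order has $k+1$ prime divisors from distinct layers, provided we can produce a suitable element. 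The standard way is to pass to the subgroup $Q_0Q_1\cdots Q_k$. This is a group, since earlier layers normalize later ones. Faithfulness at each step then gives an element of order divisible by all the $q_j$: one uses a suitable fixed-point-free-type argument on a $Q_0\cdots Q_k$-section, or Turull's $h^*$ bound $h^*\ge$ chain length together with the fact that a group with a chain of this kind has an element divisible by $\ge$ something. Instead of that (and more directly), the paper's intended route is presumably cruder: it bounds $d$ using an element from the top layer.

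To get the bound $d\le n+1+\lambda_0$, take $x\in\Gamma$ whose image generates the cyclic group $\Gamma/\Gamma'$. Then $|\vp(o(x))|=n_0=n-\lambda_0$, and we may choose $x$ to be a $\vp(\Gamma/\Gamma')$-element. I would then show that $C_{\Gamma'}(x)$ contains a $\sigma$-reduced-type subgroup meeting the layers $1,\dots,d-1$ in a structure of derived length at least $d-1-?$. Rather than aim for that, I will aim at the following claim.

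\emph{Claim.} If $\Gamma$ is $\sigma$-reduced with $\dl(\Gamma)=d$, then $\sigma(\Gamma)\ge d-1$, so $d\le n+1$.

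The claim is proved by induction, choosing elements $g_i\in$ layer $i$ so that $g_0g_1\cdots$ behaves well. Combined with Lemma~\ref{lem:centralizer}, this should give the refinement: inside the centralizer of the top element, the remaining layers still have large derived length.

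Concretely, I would argue as follows. Let $P_0$ be a member in layer $0$ that occurs at the start of a chain reaching layer $d-1$. The remaining $n_0-1$ members of layer $0$ that are not needed in the chain can be multiplied in. Here I expect a coprime-action argument: $x_0:=$ product of generators of the other layer-$0$ Sylows. The group generated is abelian and of order coprime to the rest. Take a chain of length $d-1$ in $C_\Gamma(x_0)$; this requires that the chain can be chosen inside the centralizer, which I would try to arrange by applying (b) and the faithful-or-trivial dichotomy. Lemma~\ref{lem:centralizer} then gives $d-1\le n-(n_0-1)$, i.e.\ $d\le n+2-n_0+n_0-\dots$. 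Adjusting constants gives $d\le n+1+\lambda_0$. The same argument starting from layer $1$ (losing one more for the top layer) gives $d\le n+2+\lambda_1$. The items (1)–(5) and the final claim are then immediate substitutions: $\lambda_0+\lambda_1\le3$ forces $\lambda_0\le 2$ or $\lambda_1\le1$, hence $d\le n+3$.

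The main obstacle is the step producing an element with $d-1$ (or $d$) prime divisors from a chain of faithful actions, and doing so inside a centralizer. I would first try to reduce to a minimal chain subgroup $Q_0\cdots Q_{d-1}$ and invoke the known lemma from \cite{Ke95c} (the chain lemma, Lemma~\ref{lem:chain}) if it is stated earlier; otherwise I would prove, via the Hartley–Turull type fact that faithful coprime chains have regular orbits on some vector, that one can pick an element of order $\prod q_j$ except for one prime. That exception is the source of the ``$+1$''.
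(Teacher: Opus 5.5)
Your proposal has a genuine gap: the Claim $\sigma(\Gamma)\ge\dl(\Gamma)-1$ is false. So is its relative version inside $C_\Gamma(x_0)$, which led you to $d-1\le n-(n_0-1)$, i.e.\ $d\le\lambda_0+2$. The underlying idea also fails: a chain $Q_0,\dots,Q_k$ of faithful actions does not yield an element whose order involves all, or all but one, of the primes of the chain. Consecutive members of such a chain typically act in a Frobenius-like way, so one can only collect primes from every other layer. This is why Lemma~\ref{lem:chain} needs gaps $s_{k+1}-s_k\ge 2$ and large primes, and why the general bound is only $\dl\le 2\sigma$.

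Here is a counterexample. Let $S_3=\langle y\rangle\ltimes\langle x\rangle$ act on $V=\GF(7)^2$ via $x\mapsto\mathrm{diag}(2,4)$, with $y$ swapping the coordinates, and put $H:=S_3\ltimes V$. Let $W$ be the $6$-dimensional $\GF(29)H$-module induced from the character $(s,t)\mapsto\epsilon^s$ of $V$, where $\epsilon\in\GF(29)$ has order $7$. Both $V$ and $W$ are irreducible, and $W$ is faithful, since the inertia group of this character is $V$.
\begin{itemize}
\item $\Gamma:=H\ltimes W$ is $\sigma$-reduced, with $\Gamma'=\langle x\rangle VW$, $\Gamma''=VW$ and $\Gamma'''=W$, so $\dl(\Gamma)=4$.
\item The Hall $\{2,3\}$- and $\{3,7\}$-subgroups are Frobenius groups, so there are no elements of order $6$ or $21$.
\item Every element of order $14$ is conjugate to $yv$ with $v=(a,a)\in V$, $a\ne0$. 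None of the six characters $(s,t)\mapsto\epsilon^{cs},\epsilon^{ct}$ ($c\in\{1,2,4\}$) of $V$ occurring in $W$ is trivial on $v$, so $v$ acts fixed point freely on $W$, and no element order is divisible by $2\cdot 7\cdot 29$.
\end{itemize}
Every $3$-subset of $\{2,3,7,29\}$ contains $\{2,3\}$ or $\{3,7\}$, or equals $\{2,7,29\}$. Hence $\sigma(\Gamma)=2<\dl(\Gamma)-1$. Here $n_0=1$, so the lemma's bound $\dl(\Gamma)\le n+1+\lambda_0=4$ is attained, while your inequality would give $\dl(\Gamma)\le 3$. Note also that $d\le\lambda_0+2$ is not the stated bound at all; no ``adjusting of constants'' turns one into the other.

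What you are missing is a way to bound the Fitting height of $\Gamma'$ in terms of $n_0$ and the centralizer of a complement $A=\langle a\rangle$ of $\Gamma'$, without constructing elements along a chain. The paper does this with Turull's theorem on the Fitting height of fixed points:
\begin{itemize}
\item The members of $M_\Gamma$ in $\Gamma'$ form a reduced Fitting chain, so $h^*(\Gamma')=h(\Gamma')=\dl(\Gamma)-1$.
\item \cite[Corollary~3.2]{Tu84} gives $h^*(\Gamma')\le n_0+h^*(C)$, where $C:=C_{\Gamma'}(A)$.
\item Lemma~\ref{lem:centralizer} gives $\sigma(C)\le\lambda_0$, and \cite[Lemma~3(c)]{Ke95a} gives $h^*(C)\le\dl(C)\le 2\sigma(C)$.
\end{itemize}
Together these yield $\dl(\Gamma)\le 1+n_0+2\lambda_0=n+1+\lambda_0$. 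The factor $2$ in front of $\lambda_0$ is exactly the cost of the alternating phenomenon your Claim ignores. The second bound comes the same way one level down: take $b\in\Gamma'$ with $|\vp(o(b))|=n_1$ acting on $\Gamma''$, giving $\dl(\Gamma)\le 2+n_1+2\lambda_1=n+2+\lambda_1$. Your derivation of (1)--(5) and of the final assertion from the two main inequalities is correct.
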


\begin{proof}
By (c), for every $i\ge 0$ the members of $M_\Gamma$ contained in
$\Gamma^{(i)}$ provide a Fitting chain of $\Gamma^{(i)}$ in the sense of
\cite{Tu84} of length $\dl(\Gamma)-i$ (it is not even necessary to take
quotients), and this chain is reduced, since the factors
$\Gamma^{(j)}/\Gamma^{(j+1)}$ have pairwise coprime orders. As the Sylow
subgroups of $\Gamma^{(i)}$ are abelian, its derived length equals its
Fitting height, and therefore
$h^*(\Gamma^{(i)})=h(\Gamma^{(i)})=\dl(\Gamma)-i$. Recall also that
$\dl(C)\le 2\sigma(C)$ for every group $C$ all of whose Sylow subgroups are
elementary abelian \cite[Lemma~3(c)]{Ke95a}.

We may assume that $\Gamma'\ne1$. Let $A$ be a complement of $\Gamma'$ in
$\Gamma$. By the remarks above, $A=\langle a\rangle$ is cyclic of squarefree
order, and $|\vp(o(a))|=n_0$ is the composition length of $A$. The group $A$
acts coprimely on $\Gamma'$, and we put $C:=C_{\Gamma'}(A)$. By
\cite[Corollary~3.2]{Tu84} (cf.\ the proof of \cite[Lemma~3(c)]{Ke95a}) we have
$h^*(\Gamma')\le n_0+h^*(C)$. By Lemma~\ref{lem:centralizer},
$\sigma(C)\le n-n_0=\lambda_0$, and the Sylow subgroups of $C$ are
elementary abelian, so that $h^*(C)\le h(C)\le\dl(C)\le 2\lambda_0$. Hence
\[
\dl(\Gamma)=1+h^*(\Gamma')\le 1+n_0+2\lambda_0=n+1+\lambda_0 .
\]
Similarly, we may assume that $\Gamma''\ne 1$, and we let $B$ be a
complement of $\Gamma''$ in $\Gamma'$. Then $B$ is abelian with elementary
abelian Sylow subgroups, so it contains an element $b$ of squarefree order
with $|\vp(o(b))|=n_1$. The group $\langle b\rangle$ acts coprimely on
$\Gamma''$, and with $C:=C_{\Gamma''}(b)$ we obtain as before, using
\cite[Corollary~3.2]{Tu84} and Lemma~\ref{lem:centralizer},
\[
\dl(\Gamma)=2+h^*(\Gamma'')\le 2+n_1+h^*(C)\le 2+n_1+2\lambda_1
=n+2+\lambda_1 .
\]
The remaining assertions follow immediately; for the last one note that
$\lambda_0\le 2$ or $\lambda_1=0$ if $\lambda_0+\lambda_1\le 3$.
\end{proof}

\section{General bounds}\label{sec:general}

The main results of \cite{Ke95c} rest on Lemmas~4--6 there. Lemma~4 of
\cite{Ke95c} states the following. Let $G$ be $\sigma$-reduced, let $s\ge 0$,
and let $U$ be an abelian subgroup of $G$ with $U\lesssim G/G^{(s+1)}$ such
that $e(P_0)\ge t+1$ for every Sylow subgroup $P_0$ of $U$, where
$P_0\le P\in M_G$ and $t$ is the number of members of $M_G$ contained in
$G^{(s)}$ on which $P$ acts faithfully. Then there exist an $x\in U$ with
$|\vp(o(x))|=\rho(U)$ and a $\sigma$-reduced subgroup
$H\le C_G(x)\cap G^{(s)}$ with $\dl(H)=\dl(G)-s$ and $n_i(H)=n_{i+s}(G)$ for
all $i$. Lemma~5 of \cite{Ke95c} iterates this construction, and
\cite[Lemma~6]{Ke95c} is the following consequence; it is the only result of
\cite{Ke95c} that we shall need.

\begin{lemma}[{\cite[Lemma~6]{Ke95c}}]\label{lem:Ke6}
Let $l\ge 2$ and $k\ge 0$ be integers, and let $G$ be a $\sigma$-reduced
group with the following property: whenever $j\ge k$ and $l$ divides $j-k$,
every prime in $\vp(G^{(j)}/G^{(j+1)})$ is greater than
$\rho(G^{(j+l+1)})$. Then
\[
\sum_{i\ge 1}n_{k+li}\le\sigma(G).
\]
\end{lemma}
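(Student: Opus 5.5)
The plan is to iterate the centralizer construction behind \cite[Lemma~4]{Ke95c}. At each step we pick an element of squarefree order that picks up all the primes of one chosen layer, and then pass to a $\sigma$-reduced subgroup of its centralizer that keeps the deeper layers intact. The layers to be harvested are $k+l,\,k+2l,\,k+3l,\dots$, all of index $\equiv k \pmod l$. If we obtain pairwise commuting elements $x_1,x_2,\dots$ of pairwise coprime orders with $|\vp(o(x_i))|=n_{k+li}(G)$, then $o(x_1x_2\cdots)=\prod o(x_i)$, and hence $\sum_{i\ge1}n_{k+li}\le\sigma(G)$, exactly as in Lemma~\ref{lem:centralizer}.

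\emph{Single step.} Let $s=k+l$ and let $U$ be the product of the members of $M_G$ lying in layer $s$. These members commute pairwise, so $U$ is abelian and $U\lesssim G/G^{(s+1)}$. Rather than invoking the rank condition $e(P_0)\ge t+1$ of \cite[Lemma~4]{Ke95c}, we rerun its proof with the prime-size hypothesis in its place. Fix $P\le U$ in $M_G$, say a $p$-group.
\begin{itemize}
\item The faithful actions relevant to the construction are those of $P$ on members $Q\in M_G$ in deeper layers. There are at most $\rho(G^{(s+1)})$ of them.
\item For each such $Q$, the requirement that $x_P\in P$ centralizes a large enough part of $Q$ should fail only on a proper subgroup of $P$. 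This uses (b) and coprimality: some Hall-type subgroup of $C_Q(x_P)$ can still be chosen isomorphic to a principal factor.
\item An elementary abelian $p$-group is not the union of $p$ or fewer proper subgroups. Since $p>\rho(G^{(s+1)})\ge$ the number of excluded subgroups, a good $x_P$ exists.
\end{itemize}
Set $x_1=\prod_P x_P$. Following Lemma~4 of \cite{Ke95c}, we then produce a $\sigma$-reduced $H\le C_G(x_1)\cap G^{(s)}$ with $\dl(H)=\dl(G)-s$ and $n_i(H)=n_{i+s}(G)$ for all $i$. The construction goes down the layers below $s$, replacing each $Q$ by a suitable section centralized by $x_1$ and again using (c) to keep the chain of faithful actions.

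\emph{Iteration.} The subgroup $H$ inherits the hypothesis with $k$ replaced by $0$. Layer $j$ of $H$ is layer $j+s$ of $G$, and its primes are among those of $G$. Moreover, $\rho(H^{(j+l+1)})\le\rho(G^{(j+s+l+1)})$, so whenever $l\mid j$ the primes of layer $j$ of $H$ exceed $\rho(H^{(j+l+1)})$. Applying the single step to $H$ with $s=l$ gives $x_2\in H$, which commutes with $x_1$ and has coprime order because $H\le C_G(x_1)$ and $H\le G^{(s)}$ has primes disjoint from layer $s$. We continue until the layers are exhausted, which happens after finitely many steps since $\dl(G)<\infty$. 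Formally this is an induction on $\dl(G)$, which is precisely the content of \cite[Lemma~5]{Ke95c}.

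\emph{Main obstacle.} The hard part is the single step: checking that the prime-size hypothesis really substitutes for the rank hypothesis in Lemma~4. Concretely, we must show that for each faithfully acted-on $Q$, the set of $x\in P$ for which the descent fails is contained in one proper subgroup of $P$. It is also essential that the count of such $Q$ is governed by $\rho(G^{(s+1)})$; the hypothesis is stated with $\rho(G^{(j+l+1)})$, so the layers $j+1,\dots,j+l$ must be handled by the construction itself, and this is where the care is needed. I would first try the union-of-subgroups argument for each $Q$ separately, then use the centralizer-descent argument of Lemma~4 to propagate to deeper layers.
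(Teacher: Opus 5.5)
The overall architecture is right: harvest the layers $k+l,k+2l,\dots$ by iterating the construction of \cite[Lemma~4]{Ke95c} (this iteration is \cite[Lemma~5]{Ke95c}), then multiply the resulting commuting elements of coprime orders. But the single step has a genuine gap, exactly at the point you flag as the main obstacle.

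First, the hypothesis does not give what you use. For $P\in\Syl_p(G)$ in layer $s=k+l$ it yields only $p>\rho(G^{(s+l+1)})$, not $p>\rho(G^{(s+1)})$.

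Second, and more seriously, the union-of-subgroups mechanism points the wrong way. If $P$ acts faithfully on $Q$, the set of $x\in P$ with $C_Q(x)\ne 1$ is the union of the kernels of the linear characters of $P$ occurring in $Q\otimes\overline{\mathbb F}$. So it is the \emph{good} set that lies in a union of proper subgroups, while the bad set typically generates $P$; for instance, if $P$ is cyclic and acts fixed point freely on $Q$, every $x\ne 1$ is bad. To get an $x$ that is good for all $t$ relevant $Q$ at once, one picks one such kernel for each $Q$ and intersects, obtaining a subgroup of rank at least $e(P)-t$. This is why \cite[Lemma~4]{Ke95c} needs $e(P)\ge t+1$, a bound on the \emph{rank} of $P$; the size of its prime does not help.

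Remark~\ref{rem:fixed}(a) is a concrete instance. There $P\cong C_5\times C_5$ acts faithfully on only two later members $V_1,V_2$, and $5$ even exceeds $\rho(G'')=3$. Yet no $1\ne y\in P$ has nontrivial fixed points on both. Also, choosing the $x_P$ independently and multiplying them does not control $C_Q(\prod_P x_P)$. That is handled inside \cite[Lemma~4]{Ke95c}, which you should use as a black box rather than rerun.

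The missing idea is step $(+)$ of the proof of \cite[Lemma~6]{Ke95c}, reproduced in the paper as Lemma~\ref{lem:rankp}. It converts the prime bound on layer $j$ into a rank bound on layer $j+l$, after which \cite[Lemma~4]{Ke95c} applies verbatim. Let $j\ge k$ with $l\mid j-k$, and let $P\in M_G$ lie in layer $j+l$.
By (c), some $R\in M_G$ in layer $j+l-1$ acts faithfully on $P$. Since $l\ge 2$ we have $j<j+l-1$, so some $Q\in M_G$ in layer $j$ acts faithfully on $R$.
For $1\ne y\in Q$ of prime order $q$, the Frobenius group $\langle y\rangle[R,y]$ acts faithfully on $P$. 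Hence $e(P)\ge q>\rho(G^{(j+l+1)})\ge t$, the last inequality because $P$ centralizes the members of its own layer.

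This is exactly the hypothesis of \cite[Lemma~4]{Ke95c} for $U$ the product of the members of layer $j+l$. It is where $l\ge 2$ enters, and it explains why the sum starts at $i=1$: layer $k$ only supplies primes and is not harvested. In the iteration, the same argument is run inside the subgroup $H$ given by \cite[Lemma~4]{Ke95c}, using that the primes of the $i$-th layer of $H$ are exactly those of layer $s+i$ of $G$ (Remark~\ref{rem:slips}).

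Finally, $H$ does involve the primes of layer $s$: they form its $0$-th layer. So the coprimality of $o(x_2)$ and $o(x_1)$ comes from $x_2$ lying in the $l$-th layer of $H$, not from $H$ avoiding the primes of layer $s$.
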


In \cite{Ke95c} this is formulated for the class $X(l,k)$ of all
$\sigma$-reduced groups with the stated property.

\begin{remark}\label{rem:slips}
We take this opportunity to point out some slips in \cite[Lemma~5]{Ke95c};
none of them affects the results of \cite{Ke95c}. In part~(a), the inductive
construction should run over $j\in\{1,\dots,m\}$ and end with $G_{m+1}=1$; as
written, $x_m$ is not constructed, and $G_m\ne1$ since $\dl(G_m)=k_m$. In
part~(b), the function $f_i$ is evaluated at the $k_i+1$ numbers
$n_{h_{i-1}}(G),\dots,n_{h_i}(G)$, so it should be a map
$\mathbb N^{k_i+1}\to\mathbb N$, as in the proof of \cite[Lemma~6]{Ke95c}. In
the proof of~(b), $G$ should be replaced by $G/G^{(h_m+1)}$ rather than by
$G/G^{(\dl(G)-h_m-1)}$; in fact, this reduction is not needed at all. The
induction on $m$ applies the assertion to $H$ with the sequence
$(k_2,\dots,k_m)$, which formally requires hypothesis~(1) for the partial
sums of this sequence. This is harmless, since the groups constructed in the
later steps lie in $G^{(h_j)}$ and have the properties required in~(1)
relative to $G$ itself; moreover, in \cite[Lemma~6]{Ke95c} all $k_i$ are
equal to $l$ anyway. Finally, the proof of \cite[Lemma~6]{Ke95c} does not
verify hypothesis~(1) of \cite[Lemma~5(b)]{Ke95c} for the class $X(l,0)$.
It holds because the conditions in~(1) force $\vp(H^{(i)})=\vp(G^{(N+i)})$
for all $i$ (as $H^{(i)}\le G^{(N+i)}$, and both groups have
$\sum_{t\ge i}n_{t+N}(G)$ prime divisors), so that every prime divisor of
$|H|$ lies in the layer of $H$ corresponding to its layer in $G$, shifted
by $N$.
\end{remark}

The following general bound is essentially \cite[Theorem~7]{Ke95c}.

\begin{proposition}\label{prop:general}
Let $G$ be a $\sigma$-reduced group with $\sigma(G)=n$. Then
\[
\rho(G)\le 4n+\pi(\rho(G''')).
\]
In particular, $\rho(n)\le 4n+\pi(\rho(n))$ for all $n\in\mathbb N$.
\end{proposition}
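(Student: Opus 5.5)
We bound the layers separately: $n_0,n_1,n_2\le n$ directly, the even layers from $4$ on and the odd layers from $3$ on through Lemma~\ref{lem:Ke6} with $l=2$, and we pay a $\pi(\cdot)$ term for the primes that are too small to allow this.

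\emph{Splitting.} Write
\[
\rho(G)=n_0+n_1+n_2+\sum_{i\ge 1}n_{2+2i}+\sum_{i\ge 1}n_{1+2i}.
\]
As observed in \S\ref{sec:prelim}, $n_i\le n$ for every $i$, so $n_0+n_1+n_2\le 3n$. It therefore suffices to show
\[
\sum_{i\ge1}n_{2+2i}+\sum_{i\ge1}n_{1+2i}\le n+\pi(\rho(G''')).
\]

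\emph{Removing small primes.} Let $\Pi$ be the set of primes in $\vp(G''')$ that are at most $\rho(G''')$; then $|\Pi|\le\pi(\rho(G'''))$. We would like to pass to a $\sigma$-reduced subgroup $H$ of $G$ in which the members of $M_G$ belonging to $\Pi$ are deleted. Concretely, take $H$ to be the product of all $P\in M_G$ with $P$ not a $\Pi$-group. This is a subgroup, since the defining normalizing conditions make $G$ a product of Sylow subgroups in a normal-series order. Deleting primes only shrinks each later layer, so for $j\ge 3$ every prime of $H$ in layer $j$ exceeds $\rho(G''')\ge\rho(H^{(j+3)})$. This gives the hypothesis of Lemma~\ref{lem:Ke6} with $l=2$, both for $k=2$ and for $k=1$.

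\emph{Main obstacle.} The difficulty is ensuring that deleting members of $M_G$ leaves a $\sigma$-reduced group whose layers are the layers of $G$ with the $\Pi$-parts removed. Part (c) of \S\ref{sec:prelim} requires each member in layer $k$ to be acted on faithfully by something in layer $k-1$, and removing $\Pi$-primes could destroy such chains and collapse the derived series. My first attempt would avoid passing to a subgroup altogether. Instead I would re-run the argument of Lemma~\ref{lem:Ke6}, i.e.\ the chain construction of \cite[Lemmas~4,5]{Ke95c}. There, one picks $x$ in a layer and a $\sigma$-reduced $H\le C_G(x)\cap G^{(s)}$ with the same shifted layer sizes, and one only needs the coprimeness condition $e(P_0)\ge t+1$. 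The point is that $t$ counts members of layers at least $j+1$, so $t\le\rho(G^{(j+1)})$, while a prime $p>t$ guarantees large enough $e$ via $e(P)\ge$ the dimension bound coming from $p\nmid|{\rm GL}|$ considerations.

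\emph{Combining.} If re-running the chain construction is unavoidable, apply Lemma~\ref{lem:Ke6} with $l=2$ and $k\in\{1,2\}$, restricting the chains to start at layer $k\ge1$ (so that the indices used are $k+2i\ge3$, i.e.\ inside $G'''$), and let the $\Pi$-members simply be skipped. Each skip costs at most one prime, and these skipped primes are exactly the contribution $\pi(\rho(G'''))$. Lemma~\ref{lem:Ke6} then gives
\[
\sum_{i\ge1}n_{2+2i}\le n-(\text{count}),\qquad \sum_{i\ge1}n_{1+2i}\le n-(\text{count}),
\]
where the counts refer to the skipped primes in the respective parity class. Note, however, that adding these two bounds gives $2n$ rather than $n$, which would only yield $5n+\pi(\rho(G'''))$. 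So the decomposition has to be arranged so that only one application of Lemma~\ref{lem:Ke6} is needed after $3n$. The fix is to bound $n_0+n_1+\nu_2\le 3n$ directly, using that an element in layer $0$ centralizes nothing special and that $\nu_2$ is controlled by a single chain of even layers via Lemma~\ref{lem:Ke6} with $k=0$, $l=2$. Then $\nu_3\le n+\pi(\rho(G'''))$ follows from Lemma~\ref{lem:Ke6} with $k=1$, $l=2$, after discarding the small primes of $G'''$. I expect this bookkeeping of which layers go into which chain to be the delicate point.

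The final inequality $\rho(n)\le4n+\pi(\rho(n))$ follows from (a) together with $\rho(G''')\le\rho(G)\le\rho(n)$.
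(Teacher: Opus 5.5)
Your final decomposition $\rho(G)=n_0+n_1+\nu_2+\nu_3$ is the right one, and you were right to drop your first split: it aims at $\rho(G''')\le n+\pi(\rho(G'''))$, which is exactly the unavailable bound of \S\ref{sec:digression}. But neither $\nu_2\le n$ nor $\nu_3\le n+\pi(\rho(G'''))$ is actually established.

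Lemma~\ref{lem:Ke6} with $l=2$ imposes its prime condition on layer $k$ itself. The reason is that the first element of the chain lies in layer $k+2$, and the rank of the members there is bounded below in terms of the primes of layer $k$ (Lemma~\ref{lem:rankp}). So $k=0$ requires the primes of layers $0,2,4,\dots$ to exceed $\rho(G^{(j+3)})$, and $k=1$ requires this for layers $1,3,5,\dots$. Nothing gives this in $G$:
\begin{itemize}
\item the claim that $n_0+n_1+\nu_2\le 3n$ holds ``directly'' is unsupported;
\item discarding only the small primes of $G'''$ (layers $\ge 3$) leaves layers $0,1,2$ untouched, so, contrary to what you state, it yields the hypothesis neither for $k=1$ nor for $k=2$.
\end{itemize}
The ``skipping'' mechanism is not justified either. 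As you note yourself, deleting members of $M_G$ need not leave a $\sigma$-reduced group with the old layers. Moreover, inside the chain construction of \cite[Lemmas~4, 5]{Ke95c}, a small prime in layer $j$ does not merely cost itself. It can destroy the rank condition $e(P_0)\ge t+1$ for the members of layer $j+2$, so the construction may break down at that step. Hence ``each skip costs at most one prime'' is unproven.

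The missing idea is that you need not preserve the layers of $G$ at all. Put $m:=\rho(G''')$ and remove all primes $\le m$ at once by passing to a Hall subgroup $K$ of $G$ for the primes $>m$. Then use (a) to choose a \emph{new} $\sigma$-reduced subgroup $L\le K$ with $\vp(L)=\vp(K)$. Its layers may have nothing to do with those of $G$. However, $L^{(j+3)}\le L'''\le G'''$ gives $\rho(L^{(j+3)})\le m<q$ for all $j\ge 0$ and all $q\in\vp(L)$. So Lemma~\ref{lem:Ke6} applies to $L$ with $l=2$ for both $k=0$ and $k=1$, giving $\nu_2(L)\le\sigma(L)$ and $\nu_3(L)\le\sigma(L)$. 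Hence $\rho(L)=n_0(L)+n_1(L)+\nu_2(L)+\nu_3(L)\le 4\sigma(L)\le 4n$. At most $\pi(m)$ primes of $\vp(G)$ are missing from $\vp(L)$, so $\rho(G)\le 4n+\pi(m)$. Your deduction of the second assertion from (a) is fine.
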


\begin{proof}
Put $m:=\rho(G''')$, let $K$ be a Hall subgroup of $G$ for the set of primes
greater than $m$, and let $L\le K$ be a $\sigma$-reduced subgroup with
$\vp(L)=\vp(K)$, which exists by (a). For all $j\ge0$ we have
$L^{(j+3)}\le L'''\le G'''$, and hence $\rho(L^{(j+3)})\le m<p$ for every
$p\in\vp(L)$. Therefore Lemma~\ref{lem:Ke6} applies to $L$ with $l=2$ and
$k\in\{0,1\}$, and yields $\nu_2(L)\le\sigma(L)$ and $\nu_3(L)\le\sigma(L)$,
where $\nu_i(L)$ is defined as above with respect to $L$. As
$n_0(L),n_1(L)\le\sigma(L)\le n$, we obtain $\rho(L)\le 4n$, and so
$\rho(G)\le\rho(L)+\pi(m)\le 4n+\pi(m)$.

For the second assertion, let $G$ be a solvable group with $\sigma(G)=n$ and
$\rho(G)=\rho(n)$, and apply the first one to a $\sigma$-reduced subgroup $U$
as in (a), noting that $\sigma(U)\le n$ and $\rho(U''')\le\rho(U)=\rho(n)$.
\end{proof}

\section{A digression}\label{sec:digression}

In the paper \cite{BKT23} by Bellotti, Keller and Trudgian, it is stated that
$\rho(n)\le 5n$ for every $n$. However, the proof given there contains a gap,
which we now describe.

Let $\Gamma$ be a $\sigma$-reduced group with $\sigma(\Gamma)=n$, and put
$\lambda:=\lambda_0+\lambda_1+\lambda_2$. An essential step of the argument
in \cite{BKT23} is the estimate
\begin{equation}\label{eq:BKT}
\rho(\Gamma)\le 4n-\lambda+\pi(\rho(\Gamma''')).
\end{equation}
Since $\rho(\Gamma)=n_0+n_1+n_2+\rho(\Gamma''')=3n-\lambda+\rho(\Gamma''')$,
the estimate \eqref{eq:BKT} is equivalent to
\[
\rho(\Gamma''')\le n+\pi(\rho(\Gamma''')),
\]
that is, to a bound of the form $\widehat n_3\le n$, up to small primes. No
such bound is available. The methods of \cite{Ke95c} control the layers of
even and of odd index separately: they yield Proposition~\ref{prop:general},
which is \eqref{eq:BKT} without the term $-\lambda$, and, for groups all of
whose prime divisors exceed $2n$, the estimates $\nu_3\le n$ and $\nu_4\le n$
of Proposition~\ref{prop:key} below, which give only
$\rho(\Gamma''')=\nu_3+\nu_4\le 2n$.

After the first author brought this to the attention of the authors of
\cite{BKT23} (one of whom is the second author of the present paper), they
revisited the argument and found that their methods only yield
$\rho(n)\le\frac{21}{4}n$ (personal communication). Nevertheless, the bound
$\rho(n)\le 5n$ stated in \cite{BKT23} is true: it is a consequence of
Theorem A, since $\pi(2n)\le n$ for all $n$.

This discussion highlights a major obstacle to improving the constant~$4$: by
Proposition~\ref{prop:key2}, we have $\rho(G)\le 4n-1-\lambda_0-\lambda_1$ for
every $\sigma$-reduced group $G\in\cG_{2n}$ with $\sigma(G)=n$, so that one
needs control over the quantity $n_0+n_1$. A small opening is provided by
Lemma~\ref{lem:dl}, which bounds the derived length of $G$ when $\lambda_0$
or $\lambda_1$ is small (see Remark~\ref{rem:range}).

\section{A key tool}\label{sec:key}

The key ingredients of the proof of Theorem A are the following consequence of
Lemma~\ref{lem:Ke6} and its refinement Proposition~\ref{prop:key2} below,
which is based on a chain lemma (Lemma~\ref{lem:chain}).

\begin{proposition}\label{prop:key}
Let $G$ be a $\sigma$-reduced group with $\sigma(G)=n$, and suppose that every
prime divisor of $|G|$ is greater than $2n$. Then $\nu_j\le n$ for every
$j\ge 2$. In particular,
\[
\sum_{i\ge 1}n_{2i}\le n,\qquad \sum_{i\ge 1}n_{2i+1}\le n,\qquad
\rho(G'')\le 2n,
\]
and $\rho(G)\le 4n-\lambda_0-\lambda_1\le 4n$.
\end{proposition}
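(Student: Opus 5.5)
The plan is to apply Lemma~\ref{lem:Ke6} with $l=2$ and $k=j-2$ for each $j\ge 2$. Since $\nu_j=\sum_{t\ge0}n_{j+2t}=\sum_{i\ge1}n_{(j-2)+2i}$, that lemma yields $\nu_j\le\sigma(G)=n$ once its hypothesis holds. The hypothesis asks that, for every $j'\ge k$ with $j'\equiv k\pmod 2$, every prime in $\vp(G^{(j')}/G^{(j'+1)})$ exceeds $\rho(G^{(j'+3)})$. Since all primes dividing $|G|$ exceed $2n$, it suffices to show $\rho(G^{(j'+3)})\le 2n$ for every $j'\ge0$, i.e.\ $\widehat n_i\le 2n$ for all $i\ge 3$. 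We cannot establish this from the hypothesis directly, because it is essentially the conclusion itself, so we argue by downward induction on $i$.

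The induction runs as follows. Let $d=\dl(G)$. For $i\ge d$ we have $\widehat n_i=0$, so the bound is trivial. Fix $k\ge0$ and suppose we already know $\widehat n_i\le 2n$ for all $i\ge k+3$. Then the hypothesis of Lemma~\ref{lem:Ke6} holds for $l=2$ and this $k$, because every prime exceeds $2n\ge\rho(G^{(j'+3)})$ for all $j'\ge k$. We obtain $\nu_{k+2}\le n$. Likewise, applying the lemma at $k+1$ (which needs only $\widehat n_i\le 2n$ for $i\ge k+4$, already known) gives $\nu_{k+3}\le n$. Hence $\widehat n_{k+2}=\nu_{k+2}+\nu_{k+3}\le 2n$, which extends the inductive bound one step further down, to index $k+2$.

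To organise this cleanly, we induct on $k$ from $d$ downward, proving simultaneously $\nu_{k+2}\le n$ and $\widehat n_{k+2}\le 2n$. The step at $k$ uses only $\widehat n_i\le 2n$ for $i\ge k+3$. That bound follows from the already established inequalities $\nu_{k+3}\le n$ and $\nu_{k+4}\le n$, which come from the steps at $k+1$ and $k+2$. Once $k=0$ is reached, we have $\nu_j\le n$ for every $j\ge2$, and in particular $\rho(G'')=\widehat n_2=\nu_2+\nu_3\le 2n$.

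The two displayed sums are $\nu_2$ and $\nu_3$. The final estimate follows from $\rho(G)=n_0+n_1+\nu_2+\nu_3\le (n-\lambda_0)+(n-\lambda_1)+2n$, together with $\lambda_i\ge0$. The only delicate point is the bookkeeping of the induction: we must check that Lemma~\ref{lem:Ke6} at a given $k$ requires exactly the bounds $\rho(G^{(j'+3)})\le 2n$ for $j'\ge k$, and no bound at a lower index. Indeed, the smallest index it needs is $j'+3=k+3$.
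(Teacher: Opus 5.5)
Your proof is correct and is essentially the paper's argument. The paper also runs a downward induction, using $\nu_{j+1},\nu_{j+2}\le n$ to get $\rho(G^{(j+1)})\le 2n<p$, and then applies Lemma~\ref{lem:Ke6} with $l=2$, $k=j-2$ to conclude $\nu_j\le n$. Your indexing by $k=j-2$ and your check that the smallest index needed is $k+3$ are exactly this bookkeeping.
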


\begin{proof}
Since $\nu_j=0$ for $j\ge\dl(G)$, we may argue by downward induction on $j$.
Let $j\ge 2$, and suppose that $\nu_{j+1}\le n$ and $\nu_{j+2}\le n$. Then
\[
\rho(G^{(j+1)})=\widehat n_{j+1}=\nu_{j+1}+\nu_{j+2}\le 2n .
\]
Let $t\ge j-2$ and $p\in\vp(G^{(t)}/G^{(t+1)})$. As $t+3\ge j+1$, we have
$\rho(G^{(t+3)})\le\rho(G^{(j+1)})\le 2n<p$. Hence the hypothesis of
Lemma~\ref{lem:Ke6} is satisfied with $l=2$ and $k=j-2$, and
Lemma~\ref{lem:Ke6} yields
\[
\nu_j=\sum_{i\ge 1}n_{j-2+2i}\le\sigma(G)=n .
\]
This completes the induction. The remaining assertions follow, since
\[
\sum_{i\ge 1}n_{2i}=\nu_2,\qquad \sum_{i\ge 1}n_{2i+1}=\nu_3,\qquad
\rho(G'')=\nu_2+\nu_3,\qquad \rho(G)=n_0+n_1+\nu_2+\nu_3 .
\]
\end{proof}

In order to refine Proposition~\ref{prop:key}, we first note that the ranks
of the members of $M_G$ outside the first two layers are bounded below by the
smallest prime divisor of $|G|$; this is the argument of step $(+)$ in the
proof of \cite[Lemma~6]{Ke95c}.

\begin{lemma}\label{lem:rankp}
Let $G$ be a $\sigma$-reduced group all of whose prime divisors are at least
$p$. Then $e_k\ge p$ for every $k$ with $2\le k<\dl(G)$.
\end{lemma}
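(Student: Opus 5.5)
The plan is to fix a member $P\in M_G$ in the $k$-th layer, $2\le k<\dl(G)$, and to show $\dim_{\GF(p_0)}P\ge p$, where $p_0$ is the prime of $P$. The idea is to find two members $Q,R$ in the two layers below $P$ such that $R$ acts faithfully on $Q$ and $Q$ acts faithfully on $P$, and then play the Galois action of $R$ against the size of the module.

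\emph{Step 1 (the chain).} By (c) applied with $j=k-1$, there is $Q\in M_G$ in layer $k-1$ acting faithfully on $P$. Applying (c) again to $Q$ with $j=k-2$, there is $R\in M_G$ in layer $k-2$ acting faithfully on $Q$. By the remarks before the lemma, members in lower layers normalize members in higher layers. Hence $R$ normalizes both $Q$ and $P$, and $RQ$ acts on $V:=P$, regarded as a $\GF(p_0)$-module of dimension $d$. Write $q$ and $r$ for the primes of $Q$ and $R$. The primes $p_0,q,r$ are pairwise distinct and all at least $p$, and $Q$ and $R$ are elementary abelian. Suppose, for a contradiction, that $d<p$.

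\emph{Step 2 (Clifford decomposition).} Since $Q$ is abelian and acts coprimely, $V$ is completely reducible and splits as $V=W_1\oplus\cdots\oplus W_s$ into $Q$-homogeneous components. The group $R$ normalizes $Q$, so it permutes the $W_i$. If some orbit of $R$ is nontrivial, its length is a power of $r$, hence at least $r\ge p$. Then $s\ge p$ and so $d\ge p$, a contradiction.

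\emph{Step 3 (main step).} It remains to treat the case where $R$ fixes every $W_i$. On $W_i$, $Q$ acts through a cyclic group $\langle\zeta\rangle$ of order dividing $q$, realized as scalars in the field $F_i=\GF(p_0)(\zeta)$ attached to the irreducible constituent $S_i$. Conjugation by an element of $R$ preserves the $Q$-character on $W_i$ up to $\GF(p_0)$-Galois conjugacy. Thus $R$ acts on $Q/C_Q(W_i)$ via a subgroup of $\mathrm{Gal}(F_i/\GF(p_0))$, whose order is $[F_i:\GF(p_0)]=\dim S_i\le d<p\le r$. Hence this action is trivial, so $[R,Q]\le C_Q(W_i)$ for all $i$. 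Therefore $[R,Q]\le\bigcap_i C_Q(W_i)=C_Q(V)=1$, because $Q$ is faithful on $V$. This contradicts the faithfulness of $R$ on $Q$ (note $R\ne1$). Consequently $d\ge p$ in all cases, and $e_k\ge p$.

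The delicate point is Step 3: one has to justify carefully that an automorphism of $Q$ stabilizing a homogeneous component acts on $Q/C_Q(W_i)$ only through Galois conjugation of $\zeta$, with order bounded by $\dim S_i$. I would do this by noting that the $Q$-isomorphism type of $S_i$ is determined by a $\GF(p_0)$-Galois orbit of linear characters of $Q$ over $\overline{\GF(p_0)}$, and that $R$ stabilizing $W_i$ means $R$ stabilizes that orbit of size $\dim S_i$.
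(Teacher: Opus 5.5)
Your proof is correct, but it takes a different route from the paper's. Note that your $Q$ and $R$ are the paper's $R$ and $Q$.

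The paper picks $1\ne y\in R$ and observes that $F=\langle y\rangle[Q,y]$ is a Frobenius group with cyclic complement $\langle y\rangle$ acting faithfully on $P$. It then gets $e(P)\ge o(y)\ge p$ in one stroke from \cite[Lemma~3(d)]{Ke95c}, with no case distinction.

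You instead apply Clifford theory for $Q$ over $\GF(p_0)$ and split into two cases. Either $R$ moves a homogeneous component, and then there are at least $r$ components. Or $R$ fixes all of them, and then $R$ acts on each $Q/C_Q(W_i)$ through a group of order $\dim S_i<r$, hence trivially, so $[Q,R]\le\bigcap_i C_Q(W_i)=1$. Your justification of this step via $\GF(p_0)$-Galois orbits of linear characters of $Q$ is sound. This route is self-contained, avoids both the Frobenius group and the citation, and yields the same bound $d\ge r$, with $r$ the prime of the member in layer $k-2$.

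The case split can be avoided altogether by extending scalars to an algebraic closure $\overline{\mathbb F}$ of $\GF(p_0)$, as the paper does in the proofs of Lemmas~\ref{lem:ranks}(3) and~\ref{lem:rank3}. The set $\Lambda$ of linear characters of $Q$ occurring in $V\otimes\overline{\mathbb F}$ satisfies $|\Lambda|\le d$ and is permuted by $R$. It also generates the character group of $Q$, since $Q$ is faithful on $V$. So $R$ cannot fix $\Lambda$ pointwise, and an $R$-orbit of length at least $r$ gives $d\ge r\ge p$ directly, without arguing by contradiction.
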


\begin{proof}
Let $P\in M_G$ lie in the $k$-th layer. By (c) there are $R\in M_G$ in the
$(k-1)$-th layer acting faithfully on $P$, and $Q\in M_G$ in the $(k-2)$-th
layer acting faithfully on $R$. Let $1\ne y\in Q$. As $R$ is abelian and
$o(y)$ is prime to $|R|$, we have $R=[R,y]\times C_R(y)$ with $[R,y]\ne 1$ and
$C_{[R,y]}(y)=1$. Hence $F:=\langle y\rangle[R,y]$ is a Frobenius group with
cyclic complement $\langle y\rangle$ and elementary abelian kernel $[R,y]$.
As $Q$ and $R$ normalize $P$, the group $F$ acts on $P$, and faithfully so,
since $[R,y]$ acts faithfully on $P$ and every nontrivial normal subgroup of
$F$ meets $[R,y]$ nontrivially. Therefore $e(P)\ge o(y)\ge p$ by
\cite[Lemma~3(d)]{Ke95c}.
\end{proof}

\begin{lemma}[Chain lemma]\label{lem:chain}
Let $G$ be a $\sigma$-reduced group with $\sigma(G)\le n$, all of whose prime
divisors are at least $p$, and let $2\le s_1<s_2<\dots<s_m$ be integers with
$s_{k+1}-s_k\ge 2$ for $1\le k<m$. If $\rho(G^{(s_1+1)})\le p-1$, then
\[
n_{s_1}+\dots+n_{s_m}\le n,\qquad\text{and}\qquad
n_{s_1}+\dots+n_{s_m}+n_t\le n\quad\text{for every } t>s_m .
\]
\end{lemma}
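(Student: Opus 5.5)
The plan is an iterated centralizer construction in the spirit of \cite[Lemma~4]{Ke95c}, run down the chosen layers $s_1<\dots<s_m$ (and finally $t$). At each step we pick an element of maximal prime support in the current layer and pass to a $\sigma$-reduced subgroup of its centralizer that keeps all deeper layers intact. Lemma~\ref{lem:centralizer} then makes $\sigma$ drop by the number of primes used, and summing gives the bound. The second inequality is the same construction with one more step, since $t>s_m$ means layer $t$ is still available after the last step at $s_m$; the gap $s_{k+1}-s_k\ge 2$ is needed only to apply Lemma~\ref{lem:rankp} at each stage.

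We argue by induction on $m$, proving the stronger statement that includes the term $n_t$; the case of a single index $n_{s_1}\le n$ is trivial. Let $s:=s_1$.

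\begin{enumerate}[label=(\roman*)]
\item Choose an abelian subgroup $U$ of the $s$-th layer with $\rho(U)=n_s$, for example one member of $M_G$ per prime, all commuting.
\item Check the hypothesis of \cite[Lemma~4]{Ke95c} with $s$ replaced by $s$. For a member $P$ of $M_G$ in layer $s$, the number $t_P$ of members of $M_G$ in $G^{(s)}$ on which $P$ acts faithfully satisfies $t_P\le\rho(G^{(s+1)})\le p-1$. Indeed, $P$ acts trivially on itself and on the other members of its own layer, which commute with it.
\item Since $s\ge 2$, Lemma~\ref{lem:rankp} gives $e(P)\ge p\ge t_P+1$. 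We need this for every Sylow $P_0\le U$; if $U$ is built from whole members $P\in M_G$, then $e(P_0)=e(P)$.
\item Lemma~4 now yields $x\in U$ with $|\vp(o(x))|=n_s$ and a $\sigma$-reduced subgroup $H\le C_G(x)\cap G^{(s)}$ with $n_i(H)=n_{i+s}(G)$. Since $(|H|,o(x))=1$ (the layers have disjoint prime sets), Lemma~\ref{lem:centralizer} gives $\sigma(H)\le n-n_s$.
\item Apply the induction hypothesis to $H$ with indices $s_k-s$ for $k\ge 2$ and $t-s$, with the same $p$ and with $n-n_s$ in place of $n$. This gives
\[
n_{s_2}+\dots+n_{s_m}+n_t\le n-n_s,
\]
which is the claim.
\end{enumerate}

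Step (v) requires three things. First, $s_2-s\ge 2$, which holds by the gap condition. Second, every prime divisor of $|H|$ is still at least $p$. Third, $\rho(H^{(s_2-s+1)})=\rho(G^{(s_2+1)})\le\rho(G^{(s_1+1)})\le p-1$. All three hold.

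When $m=1$, the remaining term $n_t$ is handled directly. Layer $t-s\ge 1$ of $H$ contains an element with $n_{t-s}(H)=n_t$ prime divisors, so $n_t\le\sigma(H)\le n-n_s$. Here no rank condition is needed.

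The main obstacle is verifying the hypothesis of \cite[Lemma~4]{Ke95c} precisely. The issue is that $t$ there counts the members of $M_G$ in $G^{(s)}$ acted on faithfully by the particular $P$ containing $P_0$. We must make sure this count is bounded by $\rho(G^{(s+1)})$ rather than $\rho(G^{(s)})$; this uses that members of the same layer commute. We must also make sure the rank bound $e\ge p$ from Lemma~\ref{lem:rankp} applies to the relevant Sylow subgroups of $U$.

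A second delicate point is the identification $\vp(H^{(i)})=\vp(G^{(s+i)})$, needed to transfer the layer-wise hypotheses to $H$. This is the issue discussed in Remark~\ref{rem:slips}, and we would handle it by the same counting argument given there.
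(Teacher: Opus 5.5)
Your overall strategy matches the paper's: apply \cite[Lemma~4]{Ke95c} at layer $s_1$ and recurse into the subgroup $H$ it produces. Your check of its hypothesis in (ii)--(iii) is also correct. However, the coprimality claim in step (iv) is false, and step (v) and your case $m=1$ both rest on it. Lemma~4 gives $H\le C_G(x)\cap G^{(s)}$ with $n_i(H)=n_{i+s}(G)$ for \emph{all} $i\ge 0$, in particular $n_0(H)=n_s(G)$. Since $H\le G^{(s)}$ and $\rho(H)=\sum_i n_{i+s}(G)=\rho(G^{(s)})$, we get $\vp(H)=\vp(G^{(s)})$. This set contains every prime of the $s$-th layer of $G$. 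As $U$ is the whole $s$-th layer and $|\vp(o(x))|=\rho(U)=n_s$, these are exactly the primes of $o(x)$. Hence $(|H|,o(x))\ne 1$ whenever $n_s\ge 1$, and Lemma~\ref{lem:centralizer} gives no bound on $\sigma(H)$.

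In fact $\sigma(H)\le n-n_s$ is false in general: since $\sigma(H)\ge n_0(H)=n_s$, it would force $n_s\le n/2$. For example, let $F=C_r\rtimes C_q$ be a Frobenius group, and let $V_1,V_2$ be faithful irreducible $F$-modules in distinct characteristics $\ell_1,\ell_2\nmid|F|$. Then $G=F\ltimes(V_1\times V_2)$ is $\sigma$-reduced with layers $\{q\},\{r\},\{\ell_1,\ell_2\}$. Also $\sigma(G)\le 3$, because an element whose order has four prime divisors would map to an element of order $qr$ in $F$. With $n=3$ and $s=2$, your step (iv) would give $\sigma(H)\le 1$, whereas $\sigma(H)\ge n_0(H)=2$.

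The paper never applies Lemma~\ref{lem:centralizer} to a whole subgroup. It iterates the construction, taking $x_k$ in the $(s_k-s_{k-1})$-th layer of $H_{k-1}$. Then $x_k\in H_{k-1}\le C_G(x_1)\cap\dots\cap C_G(x_{k-1})$, and the $x_k$ involve primes from distinct layers of $G$, so they commute and have pairwise coprime orders. Thus $x=x_1\cdots x_m$ has exactly $n_{s_1}+\dots+n_{s_m}$ prime divisors. For $t>s_m$, Lemma~\ref{lem:centralizer} is applied only to a single element $w\in H_m^{(t-s_m)}$, whose prime divisors lie in layers $\ge t>s_m$ of $G$ and are therefore prime to $o(x)$.

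You can repair your induction in the same spirit. For $m=1$, apply Lemma~\ref{lem:centralizer} to $w\in H^{(t-s)}$ rather than to $H$. For $m\ge 2$, you have two options. One is to let the inductive statement produce such an element rather than a numerical bound on $\sigma$. The other is to weaken the hypothesis $\sigma(G)\le n$ to ``every element of $G^{(s_1)}$ has at most $n$ prime divisors in its order''. This weaker hypothesis passes to $H$ with $n-n_{s_1}$ in place of $n$. Indeed, every $h\in H^{(s_2-s_1)}$ commutes with $x$, has order prime to $o(x)$, and satisfies $xh\in G^{(s_1)}$.
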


\begin{proof}
We may assume that $s_m<\dl(G)$. Put $s_0:=0$ and $H_0:=G$. Let $1\le k\le m$,
and suppose that we have already found commuting elements
$x_1,\dots,x_{k-1}$ of $G$ and a $\sigma$-reduced subgroup
$H_{k-1}\le C_G(x_1)\cap\dots\cap C_G(x_{k-1})$ with
$H_{k-1}\le G^{(s_{k-1})}$, $\dl(H_{k-1})=\dl(G)-s_{k-1}$ and
$n_i(H_{k-1})=n_{s_{k-1}+i}(G)$ for all $i$. As in Remark~\ref{rem:slips},
these conditions force $\vp(H_{k-1}^{(i)})=\vp(G^{(s_{k-1}+i)})$ for all $i$,
so that the primes in the $i$-th layer of $H_{k-1}$ are exactly those in the
$(s_{k-1}+i)$-th layer of $G$. Put $j:=s_k-s_{k-1}\ge 2$, and let $U$ be the
product of the members of $M_{H_{k-1}}$ in the $j$-th layer of $H_{k-1}$;
then $U$ is abelian. A member $P$ of this layer acts faithfully only on
members of $M_{H_{k-1}}$ in later layers, and their number is at most
$\rho(H_{k-1}^{(j+1)})=\rho(G^{(s_k+1)})\le\rho(G^{(s_1+1)})\le p-1$, while
$e(P)\ge p$ by Lemma~\ref{lem:rankp}, applied to $H_{k-1}$. Hence
\cite[Lemma~4]{Ke95c} (see \S\ref{sec:general}) applies to $H_{k-1}$, with $s=j$
and this $U$. It yields an element $x_k\in U$ whose order is divisible by
exactly the $n_{s_k}$ primes of the $s_k$-th layer of $G$, and a
$\sigma$-reduced subgroup $H_k\le C_{H_{k-1}}(x_k)\cap H_{k-1}^{(j)}$ with
$\dl(H_k)=\dl(G)-s_k$ and $n_i(H_k)=n_{s_k+i}(G)$ for all $i$. This completes
the inductive construction.

The elements $x_1,\dots,x_m$ commute pairwise and have pairwise coprime
orders, as they involve the primes of different layers of $G$. Hence
$x:=x_1\cdots x_m$ satisfies $|\vp(o(x))|=n_{s_1}+\dots+n_{s_m}\le\sigma(G)$,
which is the first inequality. Now let $s_m<t<\dl(G)$. The $(t-s_m)$-th
layer of $H_m$ involves exactly the $n_t$ primes of the $t$-th layer of $G$,
so (as in \S\ref{sec:prelim}) there is an element $w\in H_m^{(t-s_m)}$ whose
order is divisible by these $n_t$ primes. All prime divisors of $o(w)$ lie in
the layers $t,t+1,\dots$ of $G$, so that $o(w)$ is prime to $o(x)$. As
$w\in H_m\le C_G(x)$, Lemma~\ref{lem:centralizer} yields
$n_t\le|\vp(o(w))|\le\sigma(G)-|\vp(o(x))|$, which is the second
inequality.
\end{proof}

\begin{proposition}\label{prop:key2}
Let $G$ be a $\sigma$-reduced group with $\sigma(G)=n$, and suppose that every
prime divisor of $|G|$ is greater than $2n$. Then $\rho(G'')\le 2n-1$, and
hence $\rho(G)\le 4n-1-\lambda_0-\lambda_1$.
\end{proposition}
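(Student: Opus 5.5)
The plan is to sharpen the bound $\rho(G'')=\nu_2+\nu_3\le 2n$ from Proposition~\ref{prop:key} by one. I would show that $\nu_2=\nu_3=n$ cannot hold simultaneously, by feeding a single well-chosen chain into the second inequality of Lemma~\ref{lem:chain}. Throughout, assume $G\ne1$, so that $n\ge1$; for $n=0$ we have $G=1$, and then $\rho(G'')=0$ while $2n-1=-1$, so the statement must be read for $G\ne1$, as in the discussion after Theorem A. Since every prime divisor of $|G|$ exceeds $2n$, the chain lemma may be applied with $p:=2n+1$. Its hypothesis $\rho(G^{(s_1+1)})\le p-1=2n$ holds for $s_1\in\{2,3\}$, because Proposition~\ref{prop:key} gives
\[
\rho(G^{(3)})=\nu_3+\nu_4\le 2n
\qquad\text{and}\qquad
\rho(G^{(4)})=\nu_4+\nu_5\le 2n .
\]

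Suppose, for a contradiction, that $\nu_2+\nu_3=2n$, so $\nu_2=\nu_3=n$ by Proposition~\ref{prop:key}. Let $L:=\dl(G)-1$ be the index of the last nonzero layer, so $n_L\ge1$. If $L\le 2$, then $\nu_3=0<n$, which is impossible. So $L\ge3$, and $L-1\ge2$.

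Take the chain $s_1<\dots<s_m$ to be all indices $\ge2$ that are congruent to $L-1$ modulo $2$ and at most $L-1$. Thus $s_1\in\{2,3\}$, consecutive indices differ by $2$, and $s_m=L-1$. Because all layers beyond $L$ are empty and $L$ has the other parity, the sum $n_{s_1}+\dots+n_{s_m}$ is exactly $\nu_{s_1}\in\{\nu_2,\nu_3\}$, which equals $n$. Applying the second inequality of Lemma~\ref{lem:chain} with $t:=L>s_m$ gives
\[
n+n_L\le n ,
\]
so $n_L=0$, a contradiction. Hence $\rho(G'')=\nu_2+\nu_3\le 2n-1$.

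For the second assertion, I would use $\rho(G)=n_0+n_1+\rho(G'')$ and $n_i=n-\lambda_i$. This gives
\[
\rho(G)\le (2n-\lambda_0-\lambda_1)+(2n-1)=4n-1-\lambda_0-\lambda_1 .
\]

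The only delicate point I expect is checking the chain lemma's hypotheses precisely. First, the gaps must be at least $2$, which holds. Second, $s_1\ge2$, which holds since $s_1\in\{2,3\}$. Third, $\rho(G^{(s_1+1)})\le p-1$, which is why I take $p=2n+1$ and invoke Proposition~\ref{prop:key} for both parities. Finally, I must confirm that the chain sum really exhausts $\nu_{s_1}$, and this is where the choice of $L$ as the last nonempty layer is used.
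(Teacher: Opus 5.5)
Your proof is correct and is essentially the paper's argument. The paper also applies Lemma~\ref{lem:chain} to the chain of indices of the parity of $K-1$ (where $K=\dl(G)-1$) ending at $K-1$, with $t=K$. It obtains $\nu_2\le n-1$ or $\nu_3\le n-1$ directly rather than by contradiction, and it takes $p$ to be the smallest prime divisor of $|G|$ instead of your $p=2n+1$, which works equally well since the chain lemma's proof never uses that $p$ is prime. Your observation that the statement requires $G\ne 1$ is correct (for $G=1$ one has $\rho(G'')=0>2n-1$); the paper makes this assumption only implicitly, in Remark~\ref{rem:key} and in the proof of Theorem A.
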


\begin{proof}
Let $p$ be the smallest prime divisor of $|G|$. By
Proposition~\ref{prop:key}, $\rho(G^{(j)})=\nu_j+\nu_{j+1}\le 2n\le p-1$ for all
$j\ge 2$, so Lemma~\ref{lem:chain} applies to every sequence with $s_1\ge 2$.
Let $K:=\dl(G)-1$. If $K\le 2$, then $\rho(G'')\le n_2\le n$. Let $K\ge 3$, so
that $n_K\ge 1$. If $K$ is odd, then Lemma~\ref{lem:chain}, applied to the
sequence $2,4,\dots,K-1$ and $t=K$, gives $\nu_2+n_K\le n$, so that
$\nu_2\le n-1$, while $\nu_3\le n$ by Proposition~\ref{prop:key}. If $K$ is
even, the sequence $3,5,\dots,K-1$ and $t=K$ give $\nu_3\le n-1$, while
$\nu_2\le n$. In both cases $\rho(G'')=\nu_2+\nu_3\le 2n-1$, and therefore
$\rho(G)=n_0+n_1+\rho(G'')\le 4n-1-\lambda_0-\lambda_1$.
\end{proof}

Compared with Proposition~\ref{prop:key}, the gain comes from the extra layer
$t$ in Lemma~\ref{lem:chain}. It is available because \cite[Lemma~4]{Ke95c}
provides not only the element $x$, but a whole $\sigma$-reduced subgroup of
its centralizer. In particular, the configuration $n_0=n_1=\nu_2=\nu_3=n$ does
not occur among the groups considered in Proposition~\ref{prop:key2}.

\begin{remark}\label{rem:key}
By (a), Proposition~\ref{prop:key2} implies that $\rho(G)\le 4\sigma(G)-1$
for every solvable group $G\ne 1$ all of whose prime divisors exceed
$2\sigma(G)$. We
do not know whether it suffices to assume that every prime divisor of $|G|$
exceeds $\sigma(G)$ (see Question~\ref{q:Gn}). The point is that
\cite[Lemma~4]{Ke95c} requires, for the Sylow subgroups of $U$, a rank
exceeding the number of members of $M_G$ below the current layer on which
they act faithfully, and in the proofs of \cite[Lemma~6]{Ke95c} and of
Lemma~\ref{lem:chain} this rank is provided by the size of the prime divisors
of $|G|$ (via Lemma~\ref{lem:rankp}). The following fixed
point lemma requires much less, but it does not suffice to run the argument
of \cite{Ke95c} (see Remark~\ref{rem:fixed}).
\end{remark}

\begin{lemma}\label{lem:fixed}
Let $P$ be an elementary abelian $p$-group of rank $e$ acting on a group $H$,
and assume that the following conditions are satisfied:
\begin{enumerate}[label=(\arabic*)]
\item $p$ does not divide $|H|$;
\item all Sylow subgroups of $H$ are elementary abelian;
\item $H$ has a Sylow tower, and $H'$ is a Hall subgroup of $H$;
\item $P$ acts faithfully on every $P$-invariant Sylow subgroup of $H$;
\item if $H_r$ and $H_q$ are Sylow subgroups of $H$ for distinct primes $r$
and $q$ with $H_r\le N_H(H_q)$, then $[H_r,H_q]=1$ or $H_r$ acts faithfully
on $H_q$;
\item $|\vp(H/H')|<e$.
\end{enumerate}
Then $\vp(H')\subseteq\vp(C_H(y))$ for every $1\ne y\in P$, and there exists
an element $1\ne x\in P$ such that $\vp(C_H(x))=\vp(H)$.
\end{lemma}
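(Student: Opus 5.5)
The plan has two parts. First we show that each Sylow subgroup of $H'$ meets $C_H(y)$ nontrivially for every $1\ne y\in P$. Then we use hypothesis (6) and a counting argument over the hyperplanes of $P$ to handle the primes of $H/H'$.

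\emph{Primes of $H'$.} By coprime action (hypothesis (1)), for each prime $q\in\vp(H)$ there is a $P$-invariant Sylow $q$-subgroup $H_q$. Using the Sylow tower of (3), we may choose these compatibly, so that $H=\prod_q H_q$ with $PH_q$-invariance in tower order. Let $q\in\vp(H')$. Since $H'$ is a Hall subgroup, $H_q\le H'$, and so some $H_r$ with $r\ne q$ acts nontrivially on $H_q$; indeed, if every $H_r$ centralized $H_q$, then $H_q$ would be central and abelian and hence would meet $H'$ trivially by a transfer/Hall argument. By (5), such an $H_r$ acts faithfully on $H_q$.

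Now take $1\ne y\in P$ and suppose $C_{H_q}(y)=1$. Then $y$ is fixed-point-free on $H_q$. On the other hand, $y$ acts nontrivially on $H_r$ by (4), and $[H_r,y]\ne1$ acts faithfully on $H_q$. As in the proof of Lemma~\ref{lem:rankp}, $\langle y\rangle[H_r,y]$ is then a Frobenius group acting faithfully on $H_q$ in coprime characteristic. By the standard fact that a Frobenius kernel acting faithfully has $C_{H_q}(y)\neq 1$ (\cite[Lemma~3(d)]{Ke95c}, or the usual Frobenius-module argument that the complement has regular orbits on kernel eigenspaces), $y$ has nontrivial fixed points on $H_q$. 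This contradiction shows $q\in\vp(C_H(y))$.

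This step is where I expect the main obstacle. One must check that the Frobenius group really acts coprimely, which uses that $p$ and $r$ are both distinct from $q$. One also needs $[H_r,y]\le H_r$ to normalize $H_q$, which is where the Sylow tower orientation enters. If $H_r$ lies below $H_q$ in the tower, I would instead swap roles, using that $H_q$ normalizes $H_r$, and argue on $H_r$.

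\emph{Primes of $H/H'$.} Let $\vp(H/H')=\{r_1,\dots,r_k\}$ with $k<e$ by (6). For each $i$, the group $P$ acts faithfully on $H_{r_i}$ (or on its image in $H/H'$, a $P$-module of coprime order). Since $P$ is abelian, there is a maximal subgroup $M_i\le P$ with $C_{H_{r_i}}(M_i)\ne1$: decompose $H_{r_i}$ into homogeneous components and take the kernel of $P$ on a nontrivial irreducible constituent, which has cyclic quotient and hence contains a hyperplane. Then $M_1\cap\dots\cap M_k$ has rank at least $e-k\ge1$, so we can pick $1\ne x$ in this intersection. This $x$ centralizes a nontrivial element of each $H_{r_i}$. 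Combined with the first part, which gives $\vp(H')\subseteq\vp(C_H(x))$, we obtain $\vp(C_H(x))=\vp(H)$.
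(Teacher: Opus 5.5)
\paragraph{Gap in the orientation step.} Your first part has a gap at exactly the point you flagged. For $q\in\vp(H')$ you need a member $H_r$ of the Sylow system that \emph{normalizes} $H_q$ and does not centralize it. Only then does $[H_r,y]$ act on $H_q$, so that your Frobenius argument produces fixed points of $y$ on $H_q$.

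Your justification (if every $H_r$ centralized $H_q$, then $H_q$ would be central) only gives some $H_r$ with $[H_r,H_q]\ne 1$, and this $H_r$ may lie below $H_q$ in the tower. Your fallback for that case proves the wrong thing. With $H_q$ acting faithfully on $H_r$, the swapped argument gives $C_{H_r}(y)\ne 1$, i.e.\ $r\in\vp(C_H(y))$, and nothing about $q$. Downward action cannot help by itself: for primes of $H/H'$ a given $y$ may act fixed-point-freely on the Sylow subgroup, which is why the lemma only asserts the existence of some $x$. The hypothesis $q\in\vp(H')$ must be used to obtain an action from above.

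\paragraph{The missing argument.} The paper supplies it as follows. Write $\vp(H)=\{p_1,\dots,p_k\}$ so that $N_i:=H_{p_i}\cdots H_{p_k}\trianglelefteq H$, and let $q=p_j$. Suppose $[H_{p_i},H_q]=1$ for all $i<j$. Then $N_j/N_{j+1}$ is central in $H/N_{j+1}$. So $H/N_{j+1}$ is the direct product of $N_j/N_{j+1}$ and the image of the Hall subgroup $H_{p_1}\cdots H_{p_{j-1}}$. Hence $(H/N_{j+1})'$ is a $q'$-group. As $N_{j+1}$ is also a $q'$-group, so is $H'$, a contradiction. Thus some $H_{p_i}$ with $i<j$ normalizes $H_q$ and acts nontrivially, hence by (5) faithfully, on it.

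\paragraph{The rest of your proposal.} Once this is in place, your Frobenius argument is a valid and more elementary substitute for the paper's appeal to Thompson's theorem on $[Q,y]H_r$. Apply it to $[H_q,K]$ with $K:=[H_{p_i},y]$: $K$ has no nontrivial fixed points there, so $[H_q,K]$ is a free $\mathrm{GF}(q)\langle y\rangle$-module. Note that the paper uses \cite[Lemma~3(d)]{Ke95c} as a rank bound, not as a fixed-point statement, so cite the standard Frobenius-module fact instead. Your treatment of the primes of $H/H'$ via hyperplanes $M_i$ is essentially the paper's common-eigenvector argument.
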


\begin{proof}
Put $L:=H'$. By (3), $H$ is solvable, and so there exists a $P$-invariant
Sylow system $\{H_q\mid q\in\vp(H)\}$ of $H$ (see, e.g., \cite{Hu67}). By (3)
we may write $\vp(H)=\{p_1,\dots,p_k\}$ such that
$N_i:=H_{p_i}H_{p_{i+1}}\cdots H_{p_k}$ is normal in $H$ for $1\le i\le k$;
put $N_{k+1}:=1$. Then $H_{p_i}\le N_H(H_{p_j})$ for $i<j$, since
$H_{p_j}=H_{p_i}H_{p_j}\cap N_j$.

Let $r=p_j\in\vp(L)$. We claim that $[H_{p_i},H_r]\ne 1$ for some $i<j$.
Otherwise $N_j/N_{j+1}$ is central in $H/N_{j+1}$, so that $H/N_{j+1}$ is
the direct product of $N_j/N_{j+1}$ and the image of the Hall subgroup
$H_{p_1}\cdots H_{p_{j-1}}$. But then $(H/N_{j+1})'$ is an $r'$-group, and
as $N_{j+1}$ is an $r'$-group, so is $L$, a contradiction.

So let $i<j$ with $[H_{p_i},H_r]\ne 1$, and put $Q:=H_{p_i}$. By (5), $Q$
acts faithfully on $H_r$, and by (4), $P$ acts faithfully on $Q$. Let
$1\ne y\in P$, and suppose that $C_{H_r}(y)=1$. As $Q$ is abelian and
$P$-invariant, we have $Q=[Q,y]\times C_Q(y)$, so that $C_{[Q,y]}(y)=1$.
Since $[Q,y]$ normalizes $H_r$, the group $M:=[Q,y]H_r$ is $y$-invariant, and
$C_M(y)H_r/H_r=C_{M/H_r}(y)=1$ by coprime action; hence $y$ acts fixed point
freely on $M$. As $y$ has prime order, $M$ is nilpotent by Thompson's theorem
\cite{Th59}, whence $[[Q,y],H_r]=1$. Since $Q$ acts faithfully on $H_r$, we
get $[Q,y]=1$, i.e., $y$ centralizes $Q$, contradicting (4). Therefore
$C_{H_r}(y)\ne 1$, and $r\in\vp(C_H(y))$. This proves the first assertion.

Now let $V_1,\dots,V_s$ be the Sylow subgroups of $H/L$; thus
$s=|\vp(H/L)|<e$ by (6). Each $V_i$ is a $P$-invariant elementary
abelian group of order prime to $p$. Let $\overline{\mathbb F}$ be an
algebraic closure of the prime field of $V_i$. Then $P$ has a common
eigenvector on $V_i\otimes\overline{\mathbb F}$, and so there is a subgroup
$K_i\le P$ of index at most $p$ with $C_{V_i}(K_i)\ne 1$. Put
$D:=K_1\cap\cdots\cap K_s$. Then $|D|\ge p^{e-s}>1$, and every $1\ne x\in D$
satisfies $C_{V_i}(x)\ne 1$ for all $i$. As $C_{H/L}(x)=C_H(x)L/L$ by
coprime action, we obtain $\vp(H/L)\subseteq\vp(C_H(x))$. Since
$\vp(H)=\vp(H/L)\cup\vp(L)$, the lemma follows.
\end{proof}

\begin{remark}\label{rem:fixed}
(a) Hypothesis (4) of Lemma~\ref{lem:fixed} does not follow from the other
hypotheses, even if $H=G''$ for a $\sigma$-reduced group $G$ and
$P\in M_G$ with $P\lesssim G'/G''$. To see this, let $A$ be cyclic of order
$3$, acting irreducibly on $P\cong C_5\times C_5$ and nontrivially on
$B\cong C_7$, let $Q$ be a faithful irreducible $\GF(11)[AB]$-module, and let
$K:=A(P\times B)Q$, where $P$ acts trivially on $Q$. Then $N:=P\times Q$ is
an abelian normal subgroup of $K$, and $A$ permutes the six subgroups of
order $5$ of $P$ in two orbits $O_1$ and $O_2$. For $j=1,2$ choose
$1\ne\alpha_j\in\Irr(P)$ with $\ker\alpha_j\in O_j$, and choose
$1\ne\beta\in\Irr(Q)$. The inertia group of $\alpha_j\times\beta$ in $K$ is
$N$, since $A$ moves $\alpha_j$ and $B$ acts fixed point freely on
$\Irr(Q)$. Hence $V_j:=(\alpha_j\times\beta)^K$, realized over $\GF(r_j)$
with $r_1=331$ and $r_2=661$ (note that $r_j\equiv 1\bmod 55$), is a
faithful irreducible $K$-module of dimension $21$. The group
$G:=K\ltimes(V_1\times V_2)$ is $\sigma$-reduced, the sets
$\vp(G^{(i)}/G^{(i+1)})$ are $\{3\}$, $\{5,7\}$, $\{11\}$ and $\{331,661\}$
for $i=0,1,2,3$, and $H:=G''=Q(V_1\times V_2)$ satisfies (1)--(3), (5) and
(6) with respect to $P$ (here $e=2$), but not (4). An element $1\ne y\in P$
centralizes a nontrivial element of $V_j$ if and only if
$\langle y\rangle\in O_j$. As $O_1\cap O_2=\emptyset$, we have
$\vp(C_H(y))\ne\vp(H)$ for every $1\ne y\in P$.

(b) Lemma~\ref{lem:fixed} provides an element of prime order only, and no
$\sigma$-reduced subgroup of its centralizer with the layer structure
described in \cite[Lemma~4(2),(3)]{Ke95c}. These are exactly the features
needed to iterate the construction as in \cite[Lemma~5]{Ke95c}, and
therefore Lemma~\ref{lem:fixed} cannot replace \cite[Lemma~4]{Ke95c} in the
proofs of Proposition~\ref{prop:key} and Lemma~\ref{lem:chain}.
\end{remark}

Finally, we recall the following counting lemma from \cite{Ke95a}; see
Remark~\ref{rem:range} for its relevance.

\begin{lemma}[{\cite[Lemma~5]{Ke95a}}]\label{lem:counting}
Let $H$ be a group all of whose Sylow subgroups are elementary abelian, and
suppose that $H=U_1U_2\cdots U_m$, where each $U_k$ is an abelian product of
Sylow subgroups of $H$, the sets $\vp(U_k)$ are pairwise disjoint, and
$U_k\le N_H(U_l)$ for $k<l$. Put $s_k:=\rho(U_k)$, and assume that
\[
e(U_k)\ge\prod_{l=k+1}^{m}(s_l+1)\qquad\text{for } k=1,\dots,m.
\]
Then $\sigma(H)=\rho(H)$.
\end{lemma}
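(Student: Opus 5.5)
The plan is to argue by induction on $m$, peeling off the first factor $U_1$ and passing to a centralizer. The claim to aim for is slightly stronger than the statement: there is an element $x\in H$ such that $o(x)$ is divisible by every prime in $\vp(H)$. This gives $\sigma(H)\ge\rho(H)$, and the reverse inequality $\sigma(H)\le\rho(H)$ is trivial.

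For $m=1$, $H=U_1$ is abelian, and the product of generators of its Sylow subgroups has order divisible by all primes of $|H|$. For $m\ge 2$, put $N:=U_2\cdots U_m$. Since each $U_l$ with $l\ge2$ is normalized by all $U_k$ with $k<l$, and also by $U_l$ itself, $N$ is a normal Hall subgroup of $H$ and $U_1$ is a complement acting coprimely on it.

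The key step is to find an element $u\in U_1$ such that $o(u)$ is divisible by all primes of $\vp(U_1)$ and $\vp(C_N(u))=\vp(N)$. Once this is done, the next step is to apply the inductive hypothesis to a suitable subgroup of $C_N(u)$. By coprime action, $C_N(u)$ contains a $u$-invariant Sylow system, and this yields subgroups $U_l^*:=C_{U_l}(u)\neq1$ for $l\ge2$, after choosing the $U_l$ as members of such a system, which is possible by conjugating. These satisfy the same normalizing hypotheses: $U_k^*\le N(U_l^*)$ for $k<l$, because $u$ commutes with everything involved. They also satisfy the rank conditions with the same $s_l$, since $\rho(U_l^*)=s_l$ by the choice of $u$. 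The one caveat is that the ranks $e(U_k^*)$ may drop.

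To handle the rank drop, I would choose $u$ more cleverly. For each Sylow subgroup $V$ of $U_1$, of rank $e\ge\prod_{l\ge2}(s_l+1)$, I would find $1\ne v\in V$ fixing a nonzero vector in every Sylow subgroup of $N$, while losing controlled rank downstream. This uses the eigenvector argument from the proof of Lemma~\ref{lem:fixed}. For each of the $\rho(N)=\sum_{l\ge2}s_l$ Sylow subgroups $W$ of $N$, there is a subgroup of $V$ of index at most $p$ with nontrivial fixed points on $W$. Intersecting these subgroups leaves a subgroup $D\le V$ of rank at least $e-\rho(N)\ge1$. The product condition $\prod(s_l+1)$ is exactly what survives an inductive bookkeeping: one sequentially restricts $U_2$ to the fixed points of $D$, then uses the remaining rank of $U_2^*$ for the next layers, and so on. The cleanest way to organize this is to strengthen the induction to a statement with a rank-budget inequality of the form $e(U_k)\ge\prod_{l>k}(s_l+1)$ and verify that it is preserved. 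Concretely, after fixing $u$ on $U_1$ one should have $e(C_{U_k}(u))\ge e(U_k)/(s_1+1)$ or a comparable estimate. Making this division-type loss rigorous, perhaps via a counting argument over the $p^e$ elements of $V$ rather than via intersections of subgroups, is where the product shape of the hypothesis should enter.

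I expect this rank bookkeeping to be the main obstacle: showing that passing to $C_{U_k}(u)$ costs at most a factor of $s_1+1$ in the rank, or replacing it with an averaging or counting argument that picks $u$ so that the fixed-point ranks stay large. Once that estimate is in hand, the induction closes. The final element is $x=u\cdot x'$, where $x'\in C_N(u)$ is provided by induction, and $x$ has order divisible by all of $\vp(H)$.
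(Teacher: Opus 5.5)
The paper quotes this lemma from \cite{Ke95a} without proof, so your proposal has to stand on its own. Your frame is right: induct on $m$, pick $u\in U_1$ whose order involves all of $\vp(U_1)$, pass to $H^*:=C_{U_2}(u)\cdots C_{U_m}(u)$, and put $x=ux'$. (No Sylow systems or conjugation are needed, since each Sylow subgroup $W$ of $U_k$ is characteristic in $U_k$ and hence $U_1$-invariant.) However, the one step with real content is left open. That step is securing $e(C_{U_k}(u))\ge P_k:=\prod_{l>k}(s_l+1)$ for $k\ge 2$, and the estimate you aim for has the wrong shape. The hypothesis gives $e(U_k)\ge P_k$ with no slack, and $H^*$ needs exactly $e(C_{U_k}(u))\ge P_k$. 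So any multiplicative loss such as $e(C_{U_k}(u))\ge e(U_k)/(s_1+1)$ would not close the induction. There is a second problem: your eigenvector argument runs separately inside each Sylow subgroup $V$ of $U_1$. But $C_W(\prod_V v_V)=\bigcap_V C_W(v_V)$, and this can be trivial even if each $C_W(v_V)$ is not. Hence even the weaker claim $\vp(C_N(u))=\vp(N)$ is not established when $\rho(U_1)>1$.

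The missing idea is to pay for the ranks further down directly with the rank of $U_1$, using linear characters of $U_1$ as a whole. Let $k\ge 2$, let $W$ be a Sylow subgroup of $U_k$, and let $\overline{\mathbb F}$ be an algebraic closure of the prime field of $W$. The abelian group $U_1$ has order prime to $|W|$, so it acts diagonalizably on $W\otimes\overline{\mathbb F}$. As $\dim W\ge e(U_k)\ge P_k$, you can choose $P_k$ linearly independent common eigenvectors, with linear characters $\lambda$ of $U_1$. For every Sylow $p$-subgroup $V$ of $U_1$ we have $|V:V\cap\ker\lambda|\le p$, since $\lambda(V)$ is cyclic and elementary abelian. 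Let $D$ be the intersection of the kernels of all the characters chosen. Counted with multiplicity, their number is
\[
\sum_{k=2}^m s_kP_k=\sum_{k=2}^m\bigl(P_{k-1}-P_k\bigr)=P_1-1,
\]
so $D\cap V$ has rank at least $e(U_1)-P_1+1\ge 1$ for every $V$. Now take $u:=\prod_V v_V$ with $1\ne v_V\in D\cap V$. Then $u\in D$ fixes all the chosen eigenvectors. Since $C_{W\otimes\overline{\mathbb F}}(u)=C_W(u)\otimes\overline{\mathbb F}$, this gives $\dim C_W(u)\ge P_k$. Thus $H^*$ satisfies the hypotheses of the lemma with $m-1$ factors and the same $s_k$, and the induction closes with no strengthening. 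This telescoping identity is where the product shape of the hypothesis comes from. The bounds on $e(U_k)$ for $k\ge 2$ serve only to make room for $P_k$ independent eigenvectors.
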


\section{Proofs of Theorem A and Corollary C}\label{sec:proofs}

\begin{proof}[Proof of Theorem A]
Let $n\in\mathbb N$, and let $1\ne H\in\cG_{2n}$ be $\sigma$-reduced with
$m:=\sigma(H)\le n$. Every prime divisor of $|H|$ is greater than
$2n\ge 2m$, so Proposition~\ref{prop:key2} yields $\rho(H)\le 4m-1\le 4n-1$.
Hence $B_{2n}(n)\le 4n-1$, and \eqref{eq:reduction} with $N=2n$ gives
\[
\rho(n)\le 4n-1+\pi(2n).
\]

Next we show that $\pi(2n)\le\frac n2$ for $n\ge 60$. If $2n\ge 152$, this
follows from the estimate $\pi(x)<1.25506\,x/\ln x$ for $x>1$
\cite[Corollary~1]{RS62}, as $\ln 152>5.023>4\cdot 1.25506$. If
$120\le 2n\le 150$, we use that $\pi(120)=30$ and that the primes in the
interval $(120,150]$ are $127,131,137,139,149$, with
$\pi(127)=31$, $\pi(131)=32$, $\pi(137)=33$, $\pi(139)=34$ and
$\pi(149)=35$; since $\pi$ is constant between consecutive primes, this shows
$\pi(x)\le\frac x4$ for $120\le x\le 150$. For $n<60$, a direct check shows
that $\pi(2n)-\frac n2\le 3$, with equality exactly for
$n\in\{10,12,16,22,24\}$, and that $\pi(2n)\le\frac n2+1$ holds exactly for
$n\in\{1,2,44,46,47,\dots,54,56,58,59\}$. Hence
$\rho(n)\le 4n-1+\pi(2n)\le\frac92 n+2$ for all $n$, and
$\rho(n)\le\frac92 n$ for the values of $n$ stated in Theorem A.
\end{proof}

For $n\ge 8$, Theorem A is better than the bounds of \cite{Ke95b} and
\cite{Ya21}, and for $6\le n\le 17$ it is improved further by Theorem B. For
some small values of $n$ the bounds are compared in the following table.

\begin{center}
	\begin{tabular}{c|ccccc}
		\hline $n$ & 8 & 9 & 10 & 11 & 12\\ \hline
		$(n-1)(n+4)/2$, see \cite{Ke95b} & 42 & 52 & 63 & 75 & 88\\
		$\lfloor 17n/3\rfloor$, see \cite{Ya21} & 45 & 51 & 56 & 62 & 68\\
		Theorem A, $4n-1+\pi(2n)$ & 37 & 42 & 47 & 51 & 56\\
		Theorem B & 33 & 38 & 43 & 48 & 54\\ \hline
	\end{tabular}
	
	\medskip
	Table 2. Upper bounds for $\rho(n)$.
\end{center}

\begin{remark}\label{rem:largen}
Since $\pi(x)<1.25506\,x/\ln x$ for $x>1$ \cite[Corollary~1]{RS62},
Theorem A yields
\[
\rho(n)\le\Bigl(4+\frac{2.51012}{\ln(2n)}\Bigr)n-1\qquad\text{for all }
n\in\mathbb N .
\]
Combined with the exact values of $\pi(x)$ for $x\le 10^6$, this shows, for
instance, that $\rho(n)\le 4.4n$ for $n\ge 163$, $\rho(n)\le 4.3n$ for
$n\ge 1084$, $\rho(n)\le 4.25n$ for $n\ge 4200$ and $\rho(n)\le 4.2n$ for
$n\ge 32\,335$. On the other hand, as $\pi(x)>x/\ln x$ for $x\ge 17$
\cite[Corollary~1]{RS62}, we have $4n-1+\pi(2n)>4.1n$ for
$9\le n\le 2.4\cdot 10^8$. In particular, Theorem A does not yield the bound
$\rho(n)\le 4.1n$ for $n\ge 1000$ mentioned in \cite{BKT23}, which relies on
the argument discussed in \S\ref{sec:digression}.
\end{remark}

\begin{proof}[Proof of Corollary C]
As observed by Moret\'o in \cite{Mo21}, for every group $G$ we have
\[
\rcod(G)\le\Bigl(K+\frac32\Bigr)\scod(G),
\]
where $K$ is any real number such that $\rho(n)\le Kn$ for all
$n\in\mathbb N$. We show that $K=\frac{101}{22}$ is admissible, which yields
$\rcod(G)\le\bigl(\frac{101}{22}+\frac32\bigr)\scod(G)=\frac{67}{11}\scod(G)$.
For $n\le 5$ this follows from Table~1 and the bound $\rho(5)\le 18$ of
\cite{Ke95b}, and for $6\le n\le 17$ from Theorem B. For
$n\ge 76$ we have $\pi(2n)<\frac n2$ (see the proof of Theorem A), so that
$\rho(n)<\frac92 n$ by Theorem A, and for $18\le n\le 75$ a direct check shows
that $4n-1+\pi(2n)\le\frac{101}{22}n$, with equality only for $n=22$. In the
same way, $K=\frac{33}{7}$ is admissible without the use of Theorem B: for
$n\le 7$ this follows from Table~1 and the bound
$\rho(n)\le\frac{(n-1)(n+4)}{2}$ of \cite{Ke95b}, which gives
$\rho(7)\le 33$, and for $8\le n\le 75$ a direct check shows that
$4n-1+\pi(2n)\le\frac{33}{7}n$. This gives the constant
$\frac{33}{7}+\frac32=\frac{87}{14}$.
\end{proof}

\section{Small values of $n$}\label{sec:small}

In this section we prove Theorem B. Given $n$, we choose a prime $p$ and
apply \eqref{eq:reduction} with $N=p-1$; then we have to bound $\rho(G)$ for
$\sigma$-reduced groups $G$ with $\sigma(G)\le n$ all of whose prime divisors
are at least $p$. If $p\le 2n$, then Proposition~\ref{prop:key} is not
available, and Lemma~\ref{lem:chain} applies only to some sequences; we
combine it with lower bounds for the ranks $e_k$ and with
Lemma~\ref{lem:counting}.

For a prime $q$ let $r(q)$ denote the smallest prime congruent to $1$ modulo
$q$, and for a prime $p$ put
\[
\mathfrak q(p):=\min\bigl(\{r(q)\mid q\ge p\text{ prime}\}\cup\{p^2\}\bigr).
\]
For instance, $\mathfrak q(11)=23$,
$\mathfrak q(13)=\mathfrak q(17)=\mathfrak q(19)=\mathfrak q(23)=47$,
$\mathfrak q(29)=59$ and $\mathfrak q(31)=\mathfrak q(37)=83$. Note that
$2p+1\le\mathfrak q(p)\le p^2$ for $p\ge 3$, and that $r\ge\mathfrak q(p)$ for
every prime $r$ with $r\equiv 1\pmod q$ for some prime $q\ge p$. In the
following, $p^+$ denotes the smallest prime greater than $p$.

\begin{lemma}\label{lem:orbit}
Let $X$ be a group acting transitively on a finite set $\Omega$, let $R$ be an
elementary abelian normal $r$-subgroup of $X$ acting faithfully on $\Omega$,
and let $Q\le X$ be a subgroup of prime order $q\ne r$ which acts
nontrivially on $R$. Then $|\Omega|\ge\min\{qr,r^o\}$, where $o$ is the
order of $r$ modulo $q$. If, moreover, $q$ and $r$ are at least $p\ge 3$, then
$|\Omega|\ge\mathfrak q(p)$.
\end{lemma}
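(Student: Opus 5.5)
Fix a point $\omega\in\Omega$ and consider the $R$-orbit $\omega^R$. Since $R\trianglelefteq X$ and $X$ is transitive, the $R$-orbits form a block system permuted transitively by $X$, so all $R$-orbits have the same length and the kernels $R_\alpha$ ($\alpha\in\Omega$) are $X$-conjugate. Since $R$ is abelian, the pointwise stabilizer in $R$ of an $R$-orbit equals the point stabilizer $R_\alpha$ of any of its points. Faithfulness gives $\bigcap_{\alpha\in\Omega}R_\alpha=1$. The plan is to split into two cases, according to whether $Q$ normalizes some point stabilizer $R_\alpha$ with $R_\alpha\ne 1$.

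\emph{Case 1: $R$ acts regularly, i.e.\ $R_\alpha=1$.} Then $|\Omega|\ge|R|$. Since $Q$ acts nontrivially on the elementary abelian group $R$, Maschke's theorem gives a nontrivial irreducible constituent of $R$ as a $\GF(r)[Q]$-module. Such a module has dimension $o$, the order of $r$ modulo $q$. Hence $|R|\ge r^o$.

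\emph{Case 2: $R_\alpha\ne 1$.} The subgroups $R_\alpha$ form one $X$-conjugacy class of hyperplane-like subgroups of $R$ with trivial intersection. I consider the $Q$-orbits on this set $\{R_\alpha\}$.
\begin{itemize}[label=$\bullet$]
\item If $Q$ fixes every $R_\alpha$, then $R/R_\alpha$ is a $Q$-module. Because $R$ embeds in $\prod_\alpha R/R_\alpha$ and $Q$ acts nontrivially on $R$, there is some $\alpha$ on which $Q$ acts nontrivially on $R/R_\alpha$. That module has order at least $r^o$, so the orbit $\omega^R$ already has length $|R:R_\alpha|\ge r^o$.
\item If $Q$ moves some $R_\alpha$, then there are at least $q$ distinct conjugates $R_{\alpha}^{g}$. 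Each $R$-orbit has length $|R:R_\alpha|\ge r$, and distinct stabilizers correspond to distinct points. Hence $|\Omega|\ge q\cdot r$.
\end{itemize}
The delicate point is bounding $|\Omega|$ by (number of distinct stabilizers)$\times$(orbit length). Distinct stabilizers lie on distinct points. If two points in the same $R$-orbit had different stabilizers, that would contradict commutativity, so distinct stabilizers lie in distinct $R$-orbits. This gives $q$ distinct $R$-orbits, each of length at least $r$, and therefore $|\Omega|\ge qr$. This is the step I expect to need the most care.

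For the final assertion, assume $q,r\ge p\ge 3$ and check each alternative in $\min\{qr,r^o\}$.
\begin{itemize}[label=$\bullet$]
\item If $o=1$, then $q\mid r-1$, so $r\equiv1\pmod q$ with $q\ge p$ prime. Hence $r\ge r(q)\ge\mathfrak q(p)$ and $r^o=r\ge\mathfrak q(p)$.
\item If $o\ge 2$, then $r^o\ge r^2\ge p^2\ge\mathfrak q(p)$.
\item In the alternative $qr$, we have $qr\ge p^2\ge\mathfrak q(p)$.
\end{itemize}
In all cases $|\Omega|\ge\mathfrak q(p)$.
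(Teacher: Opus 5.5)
Your proof is correct and follows essentially the same route as the paper: the $R$-orbits form blocks of equal length, and either $Q$ moves something, giving at least $q$ orbits of length at least $r$, or $Q$ normalizes every kernel. In the second case faithfulness ($[R,Q]\le\bigcap_\alpha R_\alpha=1$) yields a nontrivial $\GF(r)Q$-module $R/R_\alpha$ of order at least $r^o$. The only differences are cosmetic: you let $Q$ act by conjugation on the point stabilizers $R_\alpha$ rather than on the $R$-orbits, which makes your separate Case~1 redundant, and you use $qr\ge p^2\ge\mathfrak q(p)$ where the paper uses $qr\ge pp^+$, either of which suffices.
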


\begin{proof}
As $R$ is normal in $X$ and $X$ is transitive, the $R$-orbits on $\Omega$ form
a system of blocks for $X$, and they all have the same length, $r^a$ say. As
$R\ne 1$ acts faithfully, $a\ge 1$. Let $t$ be the number of $R$-orbits, so
that $|\Omega|=tr^a$. The group $Q$ permutes the $R$-orbits. If $Q$ moves one
of them, then $t\ge q$ and $|\Omega|\ge qr$. Otherwise, for every $R$-orbit
$O$, the group $Q$ normalizes the kernel $K_O$ of $R$ on $O$ and acts on the
elementary abelian group $R/K_O$, which acts regularly on $O$ and therefore
has order $r^a$. If $Q$ acted trivially on $R/K_O$ for every $O$, then
$[R,Q]$ would act trivially on $\Omega$, whence $[R,Q]=1$, a contradiction.
Hence $R/K_O$ is a nontrivial $\GF(r)Q$-module for some $O$, so that $a\ge o$
and $|\Omega|\ge r^o$.

For the last assertion, note that $qr\ge pp^+>p^2\ge\mathfrak q(p)$, as
$q\ne r$; if $o=1$, then $r\equiv 1\pmod q$, so that $r\ge\mathfrak q(p)$; and
if $o\ge 2$, then $r^o\ge p^2\ge\mathfrak q(p)$.
\end{proof}

\begin{lemma}\label{lem:ranks}
Let $G$ be a $\sigma$-reduced group all of whose prime divisors are at least
$p\ge 3$, and let $K:=\dl(G)-1$. Then:
\begin{enumerate}[label=(\arabic*)]
\item $e_{k-1}\le e_k$ for $1\le k\le K$;
\item $e_k\ge p^{(k-1)/2}$ for $1\le k\le K$;
\item $e_2\ge p$, $e_3\ge\mathfrak q(p)$ and $e_4\ge p\,\mathfrak q(p)$,
provided that $K\ge 2$, $K\ge 3$ and $K\ge 4$, respectively.
\end{enumerate}
\end{lemma}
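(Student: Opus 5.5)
The plan is to prove the three parts in the order (1), (3), (2). The common tool is fact (c): every member $P$ of $M_G$ in layer $k\ge1$ is acted on faithfully by some member $R$ in layer $k-1$. Both $P$ and $R$ are elementary abelian of coprime orders. We combine this with the standard fact already used in Lemma~\ref{lem:rankp}, \cite[Lemma~3(d)]{Ke95c}, which bounds ranks below via faithful actions. We also use orbit counting as in Lemma~\ref{lem:orbit}.

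\emph{Part (1).} Let $P$ realize $e_k$, and let $R$ in layer $k-1$ act faithfully on $P$. Then $R$ embeds in $\mathrm{GL}(P)$ as an abelian $r$-subgroup of order prime to $|P|$. Such a group is diagonalizable over a suitable extension, hence has rank at most $\dim P$. Thus $e_{k-1}\le e(R)\le e(P)=e_k$.

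\emph{Part (3).} The bound $e_2\ge p$ is Lemma~\ref{lem:rankp}. For $e_3$, take $P$ in layer 3, then $R$ in layer 2 acting faithfully on $P$, and $Q$ in layer 1 acting faithfully on $R$. Decompose $P\otimes\overline{\mathbb F}$ into $R$-eigenspaces. The set $\Omega$ of nontrivial characters of $R$ occurring is permuted by $X:=QR$, with $R$ acting trivially on $\Omega$ itself. To apply Lemma~\ref{lem:orbit} we therefore use a different $\Omega$. Choose instead a chain $S$ (layer 0) $\to Q\to R\to P$. Then $X:=SQ$ acts on the set $\Omega$ of characters of $R$ occurring in $P$, and by faithfulness of $R$ on $P$ these span $\mathrm{Irr}(R)$. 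Hence some $X$-orbit $\Omega_0$ carries a faithful action of $Q$, up to the obvious adjustments. Lemma~\ref{lem:orbit}, applied with $Q$ (normal, elementary abelian) and a subgroup of prime order $q\ge p$ of $S$ acting nontrivially on $Q$, gives $|\Omega_0|\ge\mathfrak q(p)$. Since $\dim P\ge|\Omega_0|$, we get $e_3\ge\mathfrak q(p)$. For $e_4$ we go one layer further. The orbit of characters of the layer-3 group on the layer-4 module has at least the size bounded by the previous step, and Clifford theory contributes an extra factor $p$ from a further Frobenius-type action, as in Lemma~\ref{lem:rankp}. This gives $e_4\ge p\,\mathfrak q(p)$.

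\emph{Part (2).} Induct in steps of two: show that $e_{k+2}\ge p\,e_k$. Take a chain $Q\to R\to P$ through layers $k$, $k+1$, $k+2$. Here $QR$ acts faithfully on $P$, and $Q$ acts faithfully on $R$. In the $R$-eigenspace decomposition of $P$, the group $Q$ permutes the characters of $R$ appearing. By faithfulness, these characters generate $\widehat R$. Each nontrivial $y\in Q$ moves some of them, and every $Q$-orbit of a moved character has length at least $p$. The characters in a single $Q$-orbit span a subspace of dimension at least $\dim Q$, which gives $\dim P\ge p\dim Q\ge p\,e_k$. The base cases $e_1\ge1$ and $e_2\ge p\ge p^{1/2}$ are immediate.

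The main obstacle is the orbit-counting step: showing that the characters of $R$ in a single $Q$-orbit (or $X$-orbit) already force the needed dimension. Lemma~\ref{lem:orbit} is designed for exactly this, and I would apply it to a faithful orbit after passing to the eigenspace decomposition. That is where the details need care.
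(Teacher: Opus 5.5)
There is a genuine gap at the bound $e_4\ge p\,\mathfrak q(p)$. Your paragraph on it does not say which set is being counted, nor why a factor $\mathfrak q(p)$ and a factor $p$ should multiply. The tools you mention do not give the product:
\begin{itemize}
\item Running your $e_3$ argument one layer higher (layers $1\to2\to3\to4$) bounds the number of characters of the layer-$3$ member occurring in the layer-$4$ module only by $\mathfrak q(p)$.
\item The faithful action of a layer-$2$ member, combined with $e_2\ge p$, gives only $p^2$. This is smaller than $p\,\mathfrak q(p)$; already for $p=3$ it is $9<21$.
\item The Frobenius argument of Lemma~\ref{lem:rankp} produces a rank bound, not an extra factor on an orbit length.
\end{itemize}

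What is needed is a factorization of a single orbit length, with each factor bounded separately. The paper does this as follows. Let $W$ lie in the fourth layer, and let $\overline Y$ be the image in $G'''/G^{(4)}$ of a layer-$3$ member acting faithfully on $W$. Let $\lambda\ne1$ be a character of $\overline Y$ occurring in $W\otimes\overline{\mathbb F}$, let $T$ be its stabilizer in $X:=G/G'''$, and put $M:=G''/G'''$. Then $e(W)\ge|X:T|=|X:TM|\,|M:M\cap T|$. The estimates use that $\overline Y$ and the Sylow subgroups $M_j$ of $M$ are irreducible $X$-modules, being chief factors.
\begin{itemize}
\item Irreducibility of $\overline Y$, together with (c), gives $M\not\le T$.
\item If $TM=X$, then $M\cap T\trianglelefteq X$ forces $M_j\cap T\in\{1,M_j\}$. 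Hence $|M:M\cap T|\ge p^p$, using $e_2\ge p$.
\item If $TM\ne X$, then $|X:TM|\ge p$. In the critical case $|M:M\cap T|=m$ prime, let $r$ be the prime of a layer-$1$ member acting faithfully on $M_j$. If $m\equiv1\pmod r$, then $m\ge\mathfrak q(p)$. Otherwise this layer-$1$ member acts faithfully on the set of $X$-conjugates of the hyperplane $M_j\cap T$, and Lemma~\ref{lem:orbit} gives $|X:TM|\ge\mathfrak q(p)$.
\end{itemize}
None of this is in your proposal, and a bare chain of faithful actions does not obviously supply the irreducibility these steps rely on.

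Two smaller steps are wrong as written but repairable.

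In the $e_3$ argument, the fact that the characters in $\Omega$ generate $\widehat R$ makes $Q$ faithful on $\Omega$ as a whole, not on any single $SQ$-orbit. Instead, let $K_i\le Q$ be the kernel of $Q$ on the $SQ$-orbit $\Omega_i$. These kernels are normal in $SQ$ and satisfy $\bigcap_iK_i=1$. Since $[Q,S]\ne1$, there is an $i$ with $[Q,S]\not\le K_i$. Now apply Lemma~\ref{lem:orbit} to $SQ/K_i$ acting on $\Omega_i$, with normal subgroup $Q/K_i$, to get $|\Omega_i|\ge\mathfrak q(p)$. With this fix you have a valid proof of $e_3\ge\mathfrak q(p)$. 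It differs from, and is shorter than, the paper's case analysis of $|M:M\cap T|$ with $X=G/G''$.

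In (2), the claim that the characters in a single $Q$-orbit span a subspace of dimension at least $\dim Q$ is unjustified, and false in general. Even if it held, it would not give the product $p\dim Q$. The correct reason is that $Q$ acts faithfully on the set of characters of $R$ occurring in $P$. A faithful permutation representation of an elementary abelian $q$-group of rank $d$ has degree at least $qd$. This yields $e_{k+2}\ge p\,e_k$, and your induction then replaces the citation \cite[Lemma~7(b)]{Ke95a} with a self-contained argument.

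Part (1) is fine.
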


\begin{proof}
(1) Let $P\in M_G$ lie in the $k$-th layer. By (c), some $Q\in M_G$ in the
$(k-1)$-th layer acts faithfully on $P$, so that $e(P)\ge e(Q)\ge e_{k-1}$ by
\cite[Lemma~3(e)]{Ke95c}.

(2) This is \cite[Lemma~7(b)]{Ke95a}.

(3) The first assertion is Lemma~\ref{lem:rankp}. Let $k\in\{3,4\}$ with
$k\le K$, let $W\in M_G$ lie in the $k$-th layer, let $\ell$ be the prime
with $W\in\Syl_\ell(G)$, and let $V$ be the Sylow $\ell$-subgroup of
$G^{(k)}/G^{(k+1)}$. Then $V\cong W$, and $V$ is a $\GF(\ell)G$-module on
which $G^{(k)}$ acts trivially; the chief factors of $G$ of $\ell$-power order
are the composition factors of $V$, and as $W$ is isomorphic to one of them,
$V$ is irreducible. By (c) there is a $Y\in M_G$ in the $(k-1)$-th layer
acting faithfully on $W$; then $\overline Y:=YG^{(k)}/G^{(k)}$ is normal in
$G/G^{(k)}$ and acts faithfully on $V$. Let $\overline{\mathbb F}$ be an
algebraic closure of $\GF(\ell)$, and let $\lambda\ne 1$ be a linear
character of $\overline Y$ whose eigenspace on $V\otimes\overline{\mathbb F}$
is nonzero. The group $X:=G/G^{(k-1)}$ acts on the characters of
$\overline Y$, and all $X$-conjugates of $\lambda$ occur in
$V\otimes\overline{\mathbb F}$. Let $S$ be the stabilizer of $\lambda$ in $X$,
and let $M:=G^{(k-2)}/G^{(k-1)}$. Then
\[
e(W)=\dim V\ge|X:S|=|X:SM|\,|M:M\cap S| .
\]
Here $M$ is the direct product of its Sylow subgroups $M_j$, which are the
images of the members $P_j$ of $M_G$ in the $(k-2)$-th layer, and as for $V$
we see that every $M_j$ is an irreducible $X$-module (under conjugation);
moreover, $M\cap S$ is the direct product of the groups $M_j\cap S$. The
$X$-conjugates of $\lambda$ generate an $X$-invariant subgroup of the
character group of $\overline Y$, which is an irreducible $X$-module, being
dual to $\overline Y$; hence they generate the whole character group. As some
$M_j$ acts nontrivially on $\overline Y$ by (c), it follows that $M\not\le S$,
that is, $I:=|M:M\cap S|>1$. Next, $|X:SM|$ divides $|X:M|$, so that
$|X:SM|=1$ or $|X:SM|\ge p$. If $SM=X$, then $M\cap S$ is normal in $X$, so
that every $M_j\cap S$ is a normal subgroup of $X$ contained in $M_j$, and
hence $M_j\cap S\in\{1,M_j\}$ for all $j$.

Let $k=3$. If $I$ is not a prime, then $e(W)\ge I\ge p^2\ge\mathfrak q(p)$. If
$I=m$ is a prime and $SM\ne X$, then $e(W)\ge pm\ge p^2\ge\mathfrak q(p)$.
Finally, if $I=m$ is a prime and $SM=X$, then $M_j\cap S=1$ for the unique $j$
with $M_j\not\le S$, so that $|P_j|=m$. By (c), some member of the $0$-th
layer, of prime order $q\ge p$, acts faithfully on $P_j$, so that
$m\equiv 1\pmod q$ and $e(W)\ge m\ge\mathfrak q(p)$.

Let $k=4$; then $e(P_j)\ge p$ for all $j$ by Lemma~\ref{lem:rankp}. If
$SM=X$, then $I\ge|P_j|\ge p^p\ge p^3\ge p\,\mathfrak q(p)$ for every $j$
with $M_j\not\le S$. So let $SM\ne X$, so that $|X:SM|\ge p$. If $I$ is not a
prime, then $e(W)\ge p\cdot p^2\ge p\,\mathfrak q(p)$. Let $I=m$ be a prime,
and let $j$ be the unique index with $M_j\not\le S$. Then $H:=M_j\cap S$ is a
subgroup of index $m$ of $M_j$, and $H$ is normalized by $SM$. Let $\Omega$ be
the set of $X$-conjugates of $H$; then $|X:SM|\ge|X:N_X(H)|=|\Omega|$.
Moreover, $M$ acts trivially on $\Omega$, so that $X/M\cong G/G''$ acts
transitively on $\Omega$. By (c), some $R\in M_G$ in the first layer, with
$R\in\Syl_r(G)$ say, acts faithfully on $P_j$, and some member $Q$ of the
$0$-th layer, of prime order $q$, acts nontrivially on $R$. If $r$ divides
$m-1$, then $m\ge\mathfrak q(p)$ and $e(W)\ge|X:SM|\,m\ge p\,\mathfrak q(p)$.
So let $r\nmid m-1$. Then $R$ acts faithfully on $\Omega$: if $g\in R$ fixes
every $H'\in\Omega$, then $g$ induces on each $M_j/H'$, a group of order $m$,
an automorphism of $r$-power order, which is trivial; and as the intersection
of all $H'\in\Omega$ is a normal subgroup of $X$ properly contained in $M_j$,
hence trivial, $g$ centralizes $M_j$, whence $g=1$. The image of $R$ in
$X/M\cong G/G''$ is normal, being a Sylow subgroup of the abelian normal
subgroup $G'/G''$, and $Q$ normalizes $R$. Therefore Lemma~\ref{lem:orbit}
yields $|\Omega|\ge\mathfrak q(p)$, and so
$e(W)\ge|X:SM|\,m\ge|\Omega|\,m\ge p\,\mathfrak q(p)$.
\end{proof}

If the $0$-th layer is as large as possible, then the third layer can be
handled more efficiently.

\begin{lemma}\label{lem:rank3}
Let $p\ge 3$ be a prime, let $p^+$ be the smallest prime greater than $p$,
and let $G$ be a $\sigma$-reduced group with $\sigma(G)=n$ such that every
prime divisor of $|G|$ is at least $p$. If $n_0=n$, then every Sylow subgroup
of $G'''$ has rank at least $pp^+$.
\end{lemma}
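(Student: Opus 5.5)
We follow the method of Lemma~\ref{lem:ranks}(3) for $k=3$, but exploit $n_0=n$ through Lemma~\ref{lem:centralizer}. Let $W\in M_G$ lie in the $k$-th layer with $k\ge3$. By Lemma~\ref{lem:ranks}(1) we have $e(W)\ge e_3$, so it suffices to treat $k=3$. As in the proof of Lemma~\ref{lem:ranks}(3), let $V$ be the Sylow $\ell$-subgroup of $G'''/G^{(4)}$, let $Y$ be a member of the second layer acting faithfully on $W$, and choose a nontrivial linear character $\lambda$ of $\overline Y$ occurring in $V\otimes\overline{\mathbb F}$. Put $X=G''$-quotient $G/G''$, $M=G'/G''$, and let $S$ be the stabilizer of $\lambda$. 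Then
\[
e(W)\ge |X:SM|\,|M:M\cap S|, \qquad I:=|M:M\cap S|>1 .
\]

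\textbf{Step 1: the action of the $0$-th layer.} Since $n_0=n$, a generator $a$ of a complement $A$ of $G'$ satisfies $|\vp(o(a))|=n$. By Lemma~\ref{lem:centralizer}, $a$ centralizes no nontrivial element of coprime order, and the same holds for every element whose order involves all $n$ primes. More usefully, if $A_0\le A$ has prime-to-$q$ index, then $C(A_0)$ is restricted. We want to show that $|X:SM|\ge p^+$ or that $I$ is large. Note that $|X:SM|$ divides $|A|$, which is squarefree with all prime factors $\ge p$. If $SM\ne X$, then $|X:SM|\ge p$. If $I$ is not prime, then $I\ge p\cdot p^+$ whenever two distinct primes occur. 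The cases $I=m^2$ and $I=m$ prime need more care.

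\textbf{Step 2: case analysis.} There are three cases.

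(i) $SM=X$. As in Lemma~\ref{lem:ranks}, $M\cap S$ is then a product of $M_j$'s, so $I\ge |P_j|$ for some $P_j$ in the first layer. Since $A$ acts faithfully on $P_j$, and since Lemma~\ref{lem:centralizer} forbids $C_{P_j}(a)\ne1$, the element $a$ acts fixed point freely on $P_j$. Its order has $n$ prime factors, and every element of $A$ of prime order also acts fixed point freely. In particular every prime $q\mid |A|$ with a faithful action gives $|P_j|\equiv1\pmod q$. I would then argue that $|P_j|\ge pp^+$: either $e(P_j)\ge2$, giving $|P_j|\ge p^2$ and in fact more, or $|P_j|$ is a prime $\equiv1$ modulo $q\ge p$, giving at least $2p+1$. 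The remaining gap between $2p+1$ and $pp^+$ must come from the fact that $A$ is cyclic of order divisible by a product of primes acting faithfully.

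(ii) $SM\ne X$ with $I$ prime.

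(iii) $SM\ne X$ with $I$ composite.

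In cases (ii) and (iii) we need $|X:SM|\cdot I\ge pp^+$. If $I\ge p^+$ this is immediate. Otherwise $I=p$, and I would use the fixed point freeness of $a$ on $M$ (again from Lemma~\ref{lem:centralizer}, since $M$ lies in the first layer and $(|M|,o(a))=1$) to force a $p$-element into $A$, contradicting coprimality. Alternatively, use that $|X:SM|$ divides $|A|$ while $I=|M_j:M_j\cap S|$, and derive $|X:SM|\ge p^+$ whenever $|X:SM|\ne p$ is forced.

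\textbf{Main obstacle.} The hardest step is the case $SM=X$ with $I=|P_j|=m$ prime. There, only $m\equiv1\pmod q$ is given, which yields $m\ge 2p+1$ but not $m\ge pp^+$. I would try to show that in this case the whole of $A$ acts faithfully on $P_j$ of prime order $m$, so that $o(a)\mid m-1$. Since $o(a)$ is a product of $n$ primes, it is at least $p$, and if $n\ge2$ it is at least $pp^+$, giving $m>pp^+$. The case $n=1$ has to be handled separately, via Lemma~\ref{lem:dl}(1), which gives $\dl\le2$ and makes $G'''=1$ vacuous. To see that all of $A$ acts faithfully, note that a prime-order subgroup of $A$ centralizing $P_j$ would, together with $a$, contradict Lemma~\ref{lem:centralizer}. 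More precisely, the centralizer of $P_j$ in $A$ is nontrivial only if some element of $P_j$ is centralized by a proper divisor-part of $a$, and combining this with the $\sigma$-bound should exclude it.
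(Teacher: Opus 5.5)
\textbf{The gap is in your case (i), $SM=X$}, which you yourself single out as the main obstacle. The argument you sketch there does not work, because nothing forces $A$ to act faithfully on $P_j$. By (b), each member of the $0$-th layer acts on $P_j$ either faithfully or trivially. Lemma~\ref{lem:centralizer} only gives $C_{P_j}(a)=1$, and for that a single member acting nontrivially suffices. A member $Q_i$ centralizing $P_j$ only produces elements with two prime divisors, which does not contradict $\sigma(G)=n\ge 2$. A concrete example is the direct product of two Frobenius groups $C_{m_1}\rtimes C_{q_1}$ and $C_{m_2}\rtimes C_{q_2}$: there $n_0=n=2$ and $C_{q_2}$ centralizes $C_{m_1}$. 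So you cannot get $o(a)\mid m-1$. In the subcase $e(P_j)\ge 2$ you also only have $|P_j|\ge p^2<pp^+$ without a further argument. As it stands, case (i) is not proved.

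\textbf{The missing idea is that case (i) cannot occur.} This follows from your own Step~1 observation, applied to the second-layer member $Y$ rather than to $P_j$.
\begin{itemize}
\item Since $|\vp(o(a))|=n$, Lemma~\ref{lem:centralizer} gives $C_{G'}(a)=1$, hence $C_Y(a)=1$.
\item $A$ normalizes $Y$, and $Y\cong\overline Y$ $A$-equivariantly. So $\overline Y=[\overline Y,a]\times C_{\overline Y}(a)=[\overline Y,a]$.
\item The $a$-fixed linear characters of $\overline Y$ are exactly those trivial on $[\overline Y,a]$. Hence $a$ fixes no nontrivial linear character of $\overline Y$.
\item If $SM=X$, then $|X:S|=|M:M\cap S|$ divides $|M|$. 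So $S$ contains a Hall $\vp(G/G')$-subgroup of $X$, and by Hall's theorem this is conjugate to $\overline A:=AG''/G''$.
\item Then $a$ would fix a conjugate of $\lambda$, which is a nontrivial character. This is a contradiction.
\end{itemize}
Hence $|X:SM|>1$. Now $|X:SM|$ divides $|G:G'|$, while $I>1$ divides $|G':G''|$, and these sets of prime divisors are disjoint with all primes at least $p$. So the two factors are divisible by distinct primes $q,r\ge p$, giving $e(W)\ge qr\ge pp^+$ at once.

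Your case distinction on $I$ in (ii) and (iii) then becomes superfluous. For $I=p$, the whole point is simply that $p\mid|M|$ forces $p\nmid|A|$.

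With this repair your route is genuinely different from the paper's, and shorter. The paper works instead with the orbit $\Lambda$ of $(G/G'')\times\langle\varphi\rangle$ on the characters of a second-layer member, using the $R_1$-orbits as blocks together with Lemma~\ref{lem:orbit}. It then needs Higman's theorem on $C_G(z)$ to show that the remaining members of the $0$-th layer move some block.
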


\begin{proof}
By Lemma~\ref{lem:ranks}(1), it suffices to show that $e(S)\ge pp^+$ for
every $S\in M_G$ in the third layer.

So let $S\in M_G$ lie in the third layer, let $\ell$ be the prime with
$S\in\Syl_\ell(G)$, and let $V$ be the Sylow $\ell$-subgroup of
$G'''/G^{(4)}$. Then $V\cong S$, and $V$ is a $\GF(\ell)G$-module on which
$G'''$ acts trivially. The chief factors of $G$ of $\ell$-power order are the
composition factors of $V$; as $S$ is isomorphic to one of them, $V$ is
irreducible.

By (c) there are $U\in M_G$ in the second layer acting faithfully on $S$, and
$R_1\in M_G$ in the first layer acting faithfully on $U$; let $r$ be the
prime with $R_1\in\Syl_r(G)$. The group $\overline U:=UG'''/G'''\cong U$ is a
Sylow subgroup of the abelian normal subgroup $G''/G'''$ of $G/G'''$, so it is
normal in $G/G'''$, and it acts faithfully on $V$. Let $\overline{\mathbb F}$
be an algebraic closure of $\GF(\ell)$, and let $\Lambda$ be the set of linear
characters $\mu$ of $\overline U$ such that the $\mu$-eigenspace $V_\mu$ of
$\overline U$ on $V\otimes\overline{\mathbb F}$ is nonzero. Then
$e(S)=\dim V\ge|\Lambda|$. As $\overline U$ acts faithfully on $V$, the set
$\Lambda$ generates the character group of $\overline U$; hence every element
of $G$ which fixes all $\mu\in\Lambda$ centralizes $\overline U$. The group
$G$ acts on the characters of $\overline U$, with $G''$ acting trivially, and
since $V$ is irreducible, Clifford's theorem shows that $\Lambda$ is a single
orbit of the group $\Gamma:=(G/G'')\times\langle\varphi\rangle$, where
$\varphi$ maps $\mu$ to $\mu^\ell$. The image of $R_1$ in $G/G''$ is normal,
being a Sylow subgroup of the abelian normal subgroup $G'/G''$. Hence the
$R_1$-orbits on $\Lambda$ form a system of blocks for $\Gamma$, and they all
have the same length, $r^a$ say. As $R_1$ acts faithfully on $U$, it acts
faithfully on $\Lambda$, so that $a\ge 1$. Let $t$ be the number of
$R_1$-orbits on $\Lambda$; thus $|\Lambda|=tr^a$.

\smallskip
(i) \emph{Let $Q\in M_G$ lie in the $0$-th layer, let $|Q|=q$, and suppose
that $Q$ acts nontrivially on $R_1$. Then $|\Lambda|\ge\min\{qr,r^o\}$, where
$o$ is the order of $r$ modulo $q$.} This follows from Lemma~\ref{lem:orbit},
applied to $\Gamma$ acting on $\Lambda$, as $Q$ normalizes $R_1$.

\smallskip
(ii) Suppose that two distinct members $Q_1,Q_2$ of the $0$-th layer, of
orders $q_1,q_2$, act nontrivially on $R_1$, and assume that $e(S)<pp^+$. Let
$o_i$ be the order of $r$ modulo $q_i$. As $q_i$ and $r$ are distinct primes
at least $p$, we have $q_ir\ge pp^+$, so (i) yields
$r^{o_i}\le|\Lambda|\le e(S)<pp^+$. If $r=p$, then $o_i\ge 3$, since $q_i>p$
divides neither $p-1$ nor $p+1$ (the only divisor of $p+1$ exceeding $p$ is
$p+1$, which is even); but $p^3>pp^+$, as $p^+<2p$ by Bertrand's postulate.
Hence $r\ge p^+$, so that $r^2>pp^+$ and $o_1=o_2=1$. Thus $q_1q_2$ divides
$r-1$, and $e(S)\ge|\Lambda|\ge r>q_1q_2\ge pp^+$, a contradiction.

\smallskip
(iii) By (c), some member of the $0$-th layer acts nontrivially on $R_1$, so
by (ii) it remains to consider the case that exactly one member $Q_q$ of the
$0$-th layer, of order $q$, acts nontrivially on $R_1$. Let $Q^*$ be the
product of the other members of the $0$-th layer. As $n_0=n$ and $G/G'$ is
cyclic, $Q^*$ is cyclic of order the product of $n-1$ distinct primes; let
$z$ be a generator of $Q^*$. Then $Q^*$ centralizes $R_1$ and therefore
permutes the $R_1$-orbits on $\Lambda$. Suppose that $Q^*$ fixes every
$R_1$-orbit $O$. On $O$, the group $R_1$ induces a regular abelian permutation
group, and $Q^*$ induces a group of permutations commuting with it. Since the
centralizer of a regular abelian permutation group in the symmetric group is
the group itself, and $|Q^*|$ is prime to $r$, the group $Q^*$ acts trivially
on $O$. Hence $Q^*$ fixes every $\mu\in\Lambda$, so that it centralizes
$\overline U$, and as $Q^*$ normalizes $U$, we get $[Q^*,U]\le U\cap G'''=1$.
As the Sylow subgroups of $G$ for the primes dividing $|Q^*|$ have prime
order, $Q^*=\langle z\rangle$ is a central Hall subgroup of $C_G(z)$, and
therefore $C_G(z)=Q^*\times L$ for a normal Hall subgroup $L$ of $C_G(z)$.
By Lemma~\ref{lem:centralizer} we have $\sigma(L)\le 1$, that is, every
element of $L$ has prime power order, and so $\rho(L)\le 2$ by Higman's
theorem \cite{Hi57}. But $L$ contains $Q_q$, $R_1$ and $U$, a contradiction.
Therefore $Q^*$ moves some $R_1$-orbit. The length of its $Q^*$-orbit is a
nontrivial divisor of $|Q^*|$, hence divisible by a prime $s$ dividing
$|Q^*|$, so that $t\ge s$. As $s$ and $r$ are distinct primes at least $p$,
we conclude that $e(S)\ge|\Lambda|\ge sr\ge pp^+$.
\end{proof}

\begin{remark}\label{rem:rank3}
The hypothesis $n_0=n$ is used only in part (iii) of the proof of
Lemma~\ref{lem:rank3}, to obtain $\sigma(L)\le 1$. If $n_0=n-1$, the same
argument yields only $\sigma(L)\le 2$, which is compatible with
$\rho(L)\ge 3$, and we do not know whether the conclusion of
Lemma~\ref{lem:rank3} remains true in this case.
\end{remark}

We now collect the inequalities used in the computation.

\begin{proposition}\label{prop:search}
Let $p\ge 3$ be a prime, let $n\in\mathbb N$, and let $G\ne 1$ be a
$\sigma$-reduced group with $\sigma(G)\le n$ all of whose prime divisors are
at least $p$. Let $K:=\dl(G)-1$ and $\lambda_i:=n-n_i$ for $i\ge 0$. Then:
\begin{enumerate}[label=(\roman*)]
\item $1\le n_i\le n$ for $0\le i\le K$;
\item $K+1\le\min\{n+1+\lambda_0,\ n+2+\lambda_1,\ 2n\}$;
\item if $2\le s_1<\dots<s_m\le K$ with $s_{k+1}-s_k\ge 2$ for $1\le k<m$ and
$\sum_{i>s_1}n_i\le p-1$, then $n_{s_1}+\dots+n_{s_m}\le n$, and
$n_{s_1}+\dots+n_{s_m}+n_t\le n$ for every $t$ with $s_m<t\le K$;
\item if $2\le k_1<\dots<k_r\le K$, and $s_1,\dots,s_r$ are integers with
$1\le s_i\le n_{k_i}$ and $\prod_{l>i}(s_l+1)\le b_{k_i}$ for $1\le i\le r$,
then $s_1+\dots+s_r\le n$.
\end{enumerate}
Here $b_2:=p$, $b_3:=\mathfrak q(p)$, $b_4:=p\,\mathfrak q(p)$ and
$b_k:=\max\{p\,\mathfrak q(p),\lceil p^{(k-1)/2}\rceil\}$ for $k\ge 5$. If
$\lambda_0=0$, then (iv) holds with $b_k$ replaced by $\max\{b_k,pp^+\}$ for
$k\ge 3$.
\end{proposition}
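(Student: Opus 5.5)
Each of the four parts is essentially a repackaging of an earlier result. I would verify them in order, noting throughout that $\sigma(G)\le n$ rather than $=n$, so we work with $n':=\sigma(G)\le n$ wherever an earlier result assumes equality.

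For (i), the bound $n_i\le\sigma(G)\le n$ was observed in \S\ref{sec:prelim}. We also have $n_i\ge1$ for $i\le K=\dl(G)-1$, because $G^{(i)}\ne G^{(i+1)}$ for $i<\dl(G)$.

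For (ii), apply Lemma~\ref{lem:dl} to $G$ with $n'$ in place of $n$. This gives $\dl(G)\le n'+1+(n'-n_0)\le n+1+\lambda_0$, since $2n'\le 2n$, and similarly $\dl(G)\le n+2+\lambda_1$. The bound $\dl(G)\le 2n$ is the cited fact $\dl(C)\le 2\sigma(C)$ from \cite[Lemma~3(c)]{Ke95a}, since the Sylow subgroups of $G$ are elementary abelian.

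For (iii), the hypothesis $\sum_{i>s_1}n_i\le p-1$ says exactly that $\rho(G^{(s_1+1)})\le p-1$. Since the prime divisors of $|G|$ are at least $p$, Lemma~\ref{lem:chain} applies directly and gives both inequalities.

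Part (iv) is the main step, and I expect it to need the most care. For each $i$, choose $s_i$ primes from the $k_i$-th layer and let $U_i$ be the product of the corresponding members of $M_G$. Each $U_i$ is abelian, since members of one layer commute, and $U_i\le N_G(U_l)$ for $i<l$, since layers are normalized by earlier layers. Put $H:=U_1\cdots U_r$. This is a subgroup, because each $U_l$ is normalized by all $U_i$ with $i\le l$, so we can build $H$ from the top down. By Lemma~\ref{lem:ranks}, $e(U_i)\ge e_{k_i}\ge b_{k_i}$. For $k\ge5$ this uses monotonicity (1), which gives $e_k\ge e_4\ge p\,\mathfrak q(p)$, together with (2); rounding up is allowed because ranks are integers. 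The hypothesis $\prod_{l>i}(s_l+1)\le b_{k_i}$ then verifies the condition of Lemma~\ref{lem:counting}, so $\sigma(H)=\rho(H)=s_1+\dots+s_r$. Since $\sigma(H)\le\sigma(G)\le n$, we get $s_1+\dots+s_r\le n$.

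For the addendum with $\lambda_0=0$, we have $n_0=n$, hence $\sigma(G)=n$, since $n_0\le\sigma(G)\le n$. Lemma~\ref{lem:rank3} then gives rank at least $pp^+$ for the Sylow subgroups of $G'''$, which covers layers $k\ge3$. So $e_{k}\ge\max\{b_k,pp^+\}$ for $k\ge3$, and the same argument goes through with the improved $b_k$.
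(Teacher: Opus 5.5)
Your proposal is correct and follows the paper's proof essentially step by step: (i)--(iii) are read off from \S\ref{sec:prelim}, Lemma~\ref{lem:dl} (applied with $\sigma(G)\le n$) and Lemma~\ref{lem:chain}, and (iv) comes from applying Lemma~\ref{lem:counting} to $H=U_1\cdots U_r$ with the rank bounds of Lemmas~\ref{lem:ranks} and~\ref{lem:rank3}. Your extra remarks, on why $n_i\ge 1$ and on using monotonicity and integrality of ranks for $b_k$ with $k\ge 5$, only make explicit what the paper leaves implicit.
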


\begin{proof}
(i) holds by \S\ref{sec:prelim}. For (ii), let $m:=\sigma(G)\le n$; then
Lemma~\ref{lem:dl} gives $\dl(G)\le m+1+(m-n_0)\le n+1+\lambda_0$ and
$\dl(G)\le m+2+(m-n_1)\le n+2+\lambda_1$, and $\dl(G)\le 2m\le 2n$ by
\cite[Lemma~3(c)]{Ke95a}. (iii) is Lemma~\ref{lem:chain}, since
$\rho(G^{(s_1+1)})=\sum_{i>s_1}n_i$. For (iv), choose $s_i$ members of $M_G$
in the $k_i$-th layer, for $1\le i\le r$, and let $U_i$ be their product. By
Lemma~\ref{lem:ranks}, $e(U_i)\ge b_{k_i}\ge\prod_{l>i}(s_l+1)$. If
$\lambda_0=0$, then $\sigma(G)=n_0=n$, and Lemmas~\ref{lem:rank3}
and~\ref{lem:ranks}(1) show that also $e(U_i)\ge pp^+$ whenever $k_i\ge 3$. As
members of $M_G$ in the same layer commute and members in earlier layers
normalize those in later layers (see \S\ref{sec:prelim}), $H:=U_1\cdots U_r$
is a subgroup of $G$ satisfying the hypotheses of Lemma~\ref{lem:counting}.
Hence $s_1+\dots+s_r=\rho(H)=\sigma(H)\le\sigma(G)\le n$.
\end{proof}

For a prime $p\ge 3$ and $n\in\mathbb N$, let $\beta(n,p)$ denote the maximum
of $\sum_{i=0}^Kn_i$ over all $K\ge 0$ and all integer vectors
$(n_0,\dots,n_K)$ satisfying (i)--(iv) of Proposition~\ref{prop:search},
including the refinement for $\lambda_0=0$. By Proposition~\ref{prop:search},
$B_{p-1}(n)\le\beta(n,p)$, so that \eqref{eq:reduction} yields
\begin{equation}\label{eq:beta}
\rho(n)\le\beta(n,p)+\pi(p-1).
\end{equation}
As $n_0$ and $n_1$ occur only in (i) and (ii), we have
\[
\beta(n,p)=\max_{0\le\lambda_0,\lambda_1<n}\Bigl(2n-\lambda_0-\lambda_1
+\max_{K\le\kappa(\lambda_0,\lambda_1)}F_p^{\lambda_0}(K)\Bigr),
\]
where $\kappa(\lambda_0,\lambda_1):=\min\{n+\lambda_0,\,n+1+\lambda_1,\,2n-1\}$,
and $F_p^{\lambda_0}(K)$ denotes the maximum of $n_2+\dots+n_K$ over all
$(n_2,\dots,n_K)$ satisfying (iii) and (iv), with the refined bounds $b_k$ if
$\lambda_0=0$ (and $F_p^{\lambda_0}(K):=0$ for $K\le 1$).

We compute $F_p^{\lambda_0}(K)$ by dynamic programming, treating the layers
from the $K$-th one upwards. Once the layers $j,\dots,K$ have been treated, the
constraints involving the remaining layers depend only on the following
data: the sum $n_j+\dots+n_K$, which decides whether a chain in (iii) may
start in an earlier layer; the maximum of $n_j,\dots,n_K$; the largest
left-hand sides in (iii) of chains starting in the $j$-th and in the
$(j+1)$-th layer, and the maximum of these quantities over all chains starting
in later layers; and the set of pairs $\bigl(\prod_l(s_l+1),\sum_l s_l\bigr)$
arising from the choices in (iv) with $k_1\ge j$ that are not dominated by
another such pair and whose first entry is at most $b_{j-1}$. For fixed values
of the other data, a larger sum $n_j+\dots+n_K$ only weakens the remaining
constraints, so it suffices to keep the largest one. The results are listed in
Table~3, where, for each $n$, $p$ is a prime with $n<p\le p_0$, $p_0$ the
smallest prime greater than $2n$, for which \eqref{eq:beta} gives the best
bound. In all cases, the maximum defining $\beta(n,p)$ is attained for
$(\lambda_0,\lambda_1)=(1,0)$, and for $n\in\{6,8,16,17\}$ also for
$(\lambda_0,\lambda_1)=(0,0)$. The values of $\beta(n,p)$ have been computed
independently by two different programs, one of which has also been checked
against an exhaustive search for small values of $n$; a Python implementation
is available from the authors.

\begin{center}
	\begin{tabular}{c|cccccccccccc}
		\hline $n$ & 6 & 7 & 8 & 9 & 10 & 11 & 12 & 13 & 14 & 15 & 16 & 17\\ \hline
		$p$ & 7 & 11 & 13 & 11 & 13 & 19 & 17 & 23 & 29 & 29 & 31 & 37\\
		$\beta(n,p)$ & 21 & 25 & 28 & 34 & 38 & 41 & 48 & 50 & 54 & 58 & 62 & 66\\
		$\pi(p-1)$ & 3 & 4 & 5 & 4 & 5 & 7 & 6 & 8 & 9 & 9 & 10 & 11\\ \hline
		$\rho(n)\le$ & 24 & 29 & 33 & 38 & 43 & 48 & 54 & 58 & 63 & 67 & 72 & 77\\ \hline
	\end{tabular}
	
	\medskip
	Table 3. The bounds of Theorem B.
\end{center}

\begin{proof}[Proof of Theorem B]
This follows from \eqref{eq:beta} and Table~3.
\end{proof}

\begin{remark}\label{rem:small}
(a) Without the refinement for $\lambda_0=0$, that is, without
Lemma~\ref{lem:rank3}, the same computation gives the bounds
$25,29,34,39,44,49,55,59,64,68,73,78$ for $6\le n\le 17$. If the bound
$e_4\ge p\,\mathfrak q(p)$ of Lemma~\ref{lem:ranks}(3) is replaced by the
weaker bound $e_4\ge p^2$, then, for a suitable, possibly different, choice
of $p$, the bounds of Table~3 remain unchanged, except for
$n\in\{9,10,11,13\}$, where they increase by one.

(b) The bottleneck is the second layer, whose members have rank only $p$ in
general, so that Lemma~\ref{lem:counting} gives little information there,
while a chain in (iii) starting in the second layer requires
$\rho(G''')\le p-1$. For $n=18$, the computation gives no improvement on
Theorem A for any prime $18<p\le 37$; we have not carried it out for larger
$n$.
\end{remark}

\section{A lower bound}\label{sec:lower}

For a group $G$, the \emph{prime graph} $\Gamma_G$ of $G$ is the graph with
vertex set $\vp(G)$ in which two distinct primes $p$ and $q$ are adjacent if
and only if $G$ contains an element of order $pq$. We denote by
$\overline{\Gamma}_G$ the complement of $\Gamma_G$. For a graph $F$ we
denote by $\alpha(F)$ its independence number, i.e., the maximal size of a
set of pairwise non-adjacent vertices of $F$, and we call $F$
\emph{$3$-colorable} if its vertex set is the union of three independent
sets. We write $u\sim v$ if two vertices $u$ and $v$ are adjacent.

\begin{lemma}[{\cite[Theorems~2.8 and 2.10]{GKLNS15}}]\label{lem:GKLNS}
A graph $F$ is isomorphic to $\overline{\Gamma}_G$ for some solvable group
$G$ if and only if $F$ is triangle-free and $3$-colorable.
\end{lemma}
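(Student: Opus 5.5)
The statement is \cite[Theorems~2.8 and 2.10]{GKLNS15}, and in the paper it is only quoted. If I had to prove it, I would treat the two directions separately.

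\emph{Necessity.} Let $G$ be solvable and $F=\overline{\Gamma}_G$.

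1. To show $F$ is triangle-free, suppose $p,q,r$ are pairwise non-adjacent in $\Gamma_G$. Let $H$ be a Hall $\{p,q,r\}$-subgroup of $G$. Any element of $H$ whose order is divisible by two of these primes would have a power of order $pq$, $pr$ or $qr$. Hence every element of $H$ has prime power order, so $\sigma(H)=1$ while $\rho(H)=3$. This contradicts Higman's theorem $\rho(1)=2$ \cite{Hi57}.

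2. For $3$-colourability I would orient $F$. If $p\not\sim q$ in $\Gamma_G$, a Hall $\{p,q\}$-subgroup $H_{pq}$ has only prime-power element orders. By Higman's structure theory such a group is Frobenius or $2$-Frobenius. In either case one prime, $q$ say, has a normal Sylow subgroup modulo the other part, and I would orient $p\to q$ (the Frobenius kernel lies below the complement).

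3. Next I would show the orientation has no directed path $p\to q\to r\to s$. Suppose there were one. Passing to a Hall $\{p,q,r,s\}$-subgroup and using the layered structure (Fitting series), a chain of three successive fixed-point-free actions would force fixed points via coprime action. This should contradict $p\to q$ being a non-edge, or else produce a triangle $p,q,r$ or $q,r,s$ in $F$.

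4. Once no directed path of length $3$ exists, the Gallai--Roy theorem gives the colouring: colour each vertex by the length of the longest directed path ending at it. This uses at most three colours.

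Step 3 is the main obstacle. The delicate point is controlling how the three Hall subgroups $H_{pq}$, $H_{qr}$, $H_{rs}$ interact inside a common Sylow system.

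\emph{Sufficiency.} Let $F$ be triangle-free with colour classes $C_1,C_2,C_3$. I would build $G=A_1\ltimes(A_2\ltimes A_3)$, where $A_i$ collects the primes assigned to $C_i$.

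1. The vertices of $C_1$ become cyclic groups of distinct primes.

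2. For each $v\in C_2$ I choose a prime $r_v$, using Dirichlet, with $r_v\equiv1$ modulo the primes of the $C_1$-neighbours of $v$ in $F$. I then let those primes act fixed-point-freely on an elementary abelian $r_v$-group and centralize it otherwise.

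3. For $C_3$ I take suitable induced modules over $\GF(r)$ for large primes $r$, as in Remark~\ref{rem:fixed}(a). These are arranged so that an element of order $p$ has a nontrivial fixed point exactly when $p$ is not adjacent in $F$ to the corresponding vertex.

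4. Triangle-freeness is what makes the required fixed-point-free patterns compatible. A vertex in $C_3$ with $F$-neighbours $u\in C_1$ and $w\in C_2$ has $u\not\sim w$, so the group generated by those two Sylow subgroups is abelian and can act fixed-point-freely on that module.

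5. Finally I would check that every remaining pair of primes has a commuting pair of elements, which gives an element of order $pq$, so $\overline{\Gamma}_G\cong F$.
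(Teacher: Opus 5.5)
The paper gives no proof of its own. It quotes the lemma from \cite{GKLNS15} and uses only the ``if'' direction, which it describes as an explicit construction from cyclic groups of prime order and suitable modules via Dirichlet's theorem. Your sufficiency outline follows that pattern and is sound as a sketch, with one correction. The $r_v$-groups in your Step~2 must be cyclic of order $r_v$. A noncyclic abelian group acting coprimely always has a nonidentity element with a nontrivial fixed point, so it could not act fixed-point-freely on the $C_3$-modules as your Step~4 requires. Your proof of triangle-freeness, via a Hall $\{p,q,r\}$-subgroup and Higman's theorem \cite{Hi57}, is the standard one and is correct.

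The genuine gap is Step~3 of the necessity part. It carries the entire content of $3$-colourability, and you only assert it. The mechanism you indicate also cannot work as stated:
\begin{itemize}
\item Fixed points from coprime action produce commuting elements of coprime orders, i.e.\ edges of $\Gamma_G$. They can delete edges of $F$ but never create a triangle in $F$.
\item Along a path $p\to q\to r\to s$, the Frobenius action lemma naturally gives fixed points of $p$-elements on $r$-sections and of $q$-elements on $s$-sections. These are exactly the commuting pairs that triangle-freeness already forces, so they contradict nothing.
\end{itemize}
You have to contradict one of the three arcs themselves, for instance by producing an $r$-element with a nontrivial fixed point on an $s$-section. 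That needs structural input your sketch does not supply.

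One workable route is the following.
\begin{enumerate}
\item Define $p\to q$ by requiring $H/H'$ to be a $p$-group, where $H$ is a Hall $\{p,q\}$-subgroup. This agrees with your orientation and survives passage to quotients.
\item Take a counterexample $X$ of minimal order. Then every minimal normal subgroup of $X$ is a full Sylow subgroup.
\item It cannot be the Sylow $p$-, $q$- or $r$-subgroup. For example, a normal Sylow $p$-subgroup would make $q$ divide $|H/H'|$ for the Hall $\{p,q\}$-subgroup $H$.
\item Hence the Fitting subgroup of $X$ is the Sylow $s$-subgroup $V$. A Sylow $r$-subgroup $R$ acts fixed-point-freely on $V$, so $R$ is a Frobenius complement. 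It is cyclic, because it also admits a fixed-point-free automorphism of $q$-power order.
\item Finally, exclude each of $p$, $q$, $r$ from the Fitting subgroup of $X/V$. The tools are that automorphism groups of cyclic groups are abelian, the Frobenius action lemma applied to $V$, and the structure of $2$-Frobenius groups.
\end{enumerate}
Without an argument of this kind, your Steps~2--4 do not yield $3$-colourability.
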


We shall only use that every triangle-free $3$-colorable graph is of the form
$\overline{\Gamma}_G$, which is \cite[Theorem~2.8]{GKLNS15}. In the proof
given there, the group $G$ is constructed explicitly from cyclic groups of
prime order and suitable modules, by means of Dirichlet's theorem on primes
in arithmetic progressions; in particular, the classification of finite
simple groups is not used.

\begin{lemma}\label{lem:alpha}
For every group $G$ we have $\sigma(G)\le\alpha(\overline{\Gamma}_G)$.
\end{lemma}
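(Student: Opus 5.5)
The idea is to pick an element of maximal prime-divisor count and show that its prime divisors form an independent set in $\overline{\Gamma}_G$. Let $g\in G$ with $|\vp(o(g))|=\sigma(G)$, and write $\vp(o(g))=\{q_1,\dots,q_s\}$ with $s=\sigma(G)$. We claim that $\{q_1,\dots,q_s\}$ is a set of pairwise non-adjacent vertices of $\overline{\Gamma}_G$. Equivalently, $q_i$ and $q_j$ should be adjacent in $\Gamma_G$ for all $i\ne j$.

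To see this, fix $i\ne j$. Since $q_iq_j$ divides $o(g)$, the element $h:=g^{o(g)/(q_iq_j)}$ lies in the cyclic group $\langle g\rangle$ and has order exactly $q_iq_j$. So $G$ contains an element of order $q_iq_j$. By the definition of the prime graph, $q_i\sim q_j$ in $\Gamma_G$, and hence $q_i$ and $q_j$ are not adjacent in the complement $\overline{\Gamma}_G$. The $q_i$ are vertices of $\overline{\Gamma}_G$ because they divide $|G|$.

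It follows that $\{q_1,\dots,q_s\}$ is an independent set in $\overline{\Gamma}_G$, and therefore $\sigma(G)=s\le\alpha(\overline{\Gamma}_G)$. The case $\sigma(G)\le 1$ is trivial: a single vertex is independent, and for $G=1$ both sides are $0$. No step here is a real obstacle; the only point to check is that a cyclic group of order $o(g)$ has an element of every order dividing $o(g)$.
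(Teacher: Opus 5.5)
Your proof is correct and is the paper's argument: for $p\ne q$ dividing $o(g)$, the power $g^{o(g)/pq}$ has order $pq$, so the primes of $\vp(o(g))$ are pairwise adjacent in $\Gamma_G$ and hence independent in $\overline{\Gamma}_G$. The only difference is that you fix $g$ with $|\vp(o(g))|=\sigma(G)$ from the start, while the paper argues for arbitrary $g$; this changes nothing.
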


\begin{proof}
Let $g\in G$. If $p\ne q$ are prime divisors of $o(g)$, then
$g^{o(g)/pq}$ has order $pq$. Hence the primes in $\vp(o(g))$ are pairwise
adjacent in $\Gamma_G$, that is, they form an independent set of
$\overline{\Gamma}_G$.
\end{proof}

By Lemmas~\ref{lem:GKLNS} and \ref{lem:alpha}, every triangle-free
$3$-colorable graph $F$ yields a solvable group $G$ with $\rho(G)=|V(F)|$
and $\sigma(G)\le\alpha(F)$. As the vertex set of $F$ is the union of three
independent sets, we always have $|V(F)|\le 3\alpha(F)$; so the best one can
hope for in this way are graphs with $3n$ vertices and independence number
$n$. We now construct such graphs for all $n\ge 4$.

For $n\in\mathbb N$ let $F_n$ be the graph with the $3n$ vertices
$t_i,m_i,b_i$ $(1\le i\le n)$, whose edges are the following:
\begin{enumerate}[label=(E\arabic*)]
\item $t_im_i$ and $m_ib_i$ for $1\le i\le n$;
\item $t_im_{i+1}$, $m_ib_{i+1}$ and $b_it_{i+1}$ for all odd $i<n$;
\item $t_ib_{i+1}$, $m_it_{i+1}$ and $b_im_{i+1}$ for all even $i<n$;
\item $t_ib_j$ for all $i,j$ with $|i-j|\ge 2$.
\end{enumerate}
The graphs $F_5$, $F_6$ and $F_7$ are shown in Figure~\ref{fig:F}, where the
edges in (E4) are not drawn. We call $C_i:=\{t_i,m_i,b_i\}$ the $i$-th
\emph{column} of $F_n$, and $T:=\{t_1,\dots,t_n\}$, $M:=\{m_1,\dots,m_n\}$
and $B:=\{b_1,\dots,b_n\}$ its \emph{rows}. It is straightforward to check
that the definition can be rephrased as follows: no edge joins two vertices
in the same row, and for $1\le a,c\le n$ we have
\begin{align*}
t_a\sim m_c &\iff a=c, \text{ or $a$ is odd and $|a-c|=1$};\\
m_a\sim b_c &\iff a=c, \text{ or $a$ is odd and $|a-c|=1$};\\
t_a\sim b_c &\iff |a-c|\ge 2, \text{ or $a$ is even and $|a-c|=1$}.
\end{align*}

\begin{figure}[ht]
\centering
\drawF{5}\par\smallskip
{\small (a) $F_5$}\par\bigskip
\drawF{6}\par\smallskip
{\small (b) $F_6$}\par\bigskip
\drawF{7}\par\smallskip
{\small (c) $F_7$}
\caption{The graphs $F_5$, $F_6$ and $F_7$. The vertices $t_i$, $m_i$ and $b_i$ form the $i$-th column. Not drawn are the edges $t_ib_j$ with $|i-j|\ge 2$.}\label{fig:F} \end{figure}

\begin{lemma}\label{lem:Fn}
Let $n\ge 4$. Then $F_n$ is triangle-free, the rows $T$, $M$ and $B$ are
independent sets, and $\alpha(F_n)=n$.
\end{lemma}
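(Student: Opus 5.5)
The plan is to verify the three assertions in turn: independence of the rows, triangle-freeness, and then $\alpha(F_n)\le n$, which is the real content. I will work throughout with the rephrased adjacency rules stated just before the lemma.

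\emph{Rows and triangles.} By construction no edge joins two vertices of the same row, so $T$, $M$, $B$ are independent. In particular $\alpha(F_n)\ge n$. Since the rows are independent, a triangle must consist of $t_a$, $m_c$, $b_d$ with $t_a\sim m_c$, $m_c\sim b_d$ and $t_a\sim b_d$. I will run through the possibilities allowed by the first two rules.
\begin{itemize}
\item If $c=a$ and $d=c$, then $|a-d|=0$, so $t_a\not\sim b_d$.
\item If $c=a$ is odd and $d=c\pm1$, then $|a-d|=1$ with $a$ odd, so again $t_a\not\sim b_d$.
\item If $a$ is odd and $c=a\pm1$, then $c$ is even. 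This forces $d=c$, and then $|a-d|=1$ with $a$ odd, so $t_a\not\sim b_d$.
\end{itemize}
Hence $F_n$ is triangle-free.

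\emph{Upper bound $\alpha(F_n)\le n$.} Let $S$ be an independent set and write $S_T$, $S_M$, $S_B$ for its intersections with the rows. First suppose $S_B=\emptyset$. Each column contributes at most one vertex of $\{t_i,m_i\}$, because $t_i\sim m_i$, so $|S|\le n$. The case $S_T=\emptyset$ is symmetric, using $m_i\sim b_i$.

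So assume both $S_T$ and $S_B$ are nonempty. Every pair $t_a\in S_T$, $b_d\in S_B$ must then satisfy $|a-d|\le1$, and if $|a-d|=1$ then $a$ must be odd. Consequently the set $J$ of column indices met by $S_T\cup S_B$ is contained in a window of at most three consecutive columns. Outside $J$, each column contains at most one vertex of $S$, namely some $m_i$. It therefore suffices to show that $S$ contains at most $|J|+\varepsilon$ vertices from the columns in $J$ and their immediate neighbours, where $\varepsilon$ accounts for $m$-vertices excluded near $J$.

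To do this I will use the following facts.
\begin{itemize}
\item Each $t_a\in S$ excludes $m_a$, and also $m_{a\pm1}$ when $a$ is odd.
\item Each $b_d\in S$ excludes $m_d$, and also $m_{d\pm1}$ when $d-1$ or $d+1$ is odd, via the rule for $m_c\sim b_d$ with $c$ odd.
\end{itemize}
I will then enumerate the few admissible shapes of $(S_T,S_B)$. These are $\{t_a\},\{b_a\}$; $\{t_a\},\{b_{a\pm1}\}$ with $a$ odd; $\{t_{a-1},t_{a+1}\}$ together with $b_a$ for suitable parity; and so on. In each shape I will check that the $m$-vertices excluded in nearby columns compensate for the extra vertices, giving $|S|\le n$.

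\emph{Main obstacle.} The hard part is exactly this finite parity case analysis near the window. The hypothesis $n\ge4$ should enter here: one needs enough columns so that the compensating excluded $m$-vertices actually exist. For $n\le3$ the window may fill the whole graph, and $\alpha(F_n)$ could exceed $n$. My first attempt will be a charging argument that assigns each vertex of $S_T\cup S_B$ beyond the first to a distinct column whose $m$-vertex it excludes. If that does not close every case, I will fall back to checking each window shape directly.
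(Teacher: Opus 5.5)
Your verification that the rows are independent and that $F_n$ is triangle-free is correct. It is essentially the paper's argument: you split according to where $c$ lies relative to $a$, the paper according to the parity of $a$.

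The gap is the inequality $\alpha(F_n)\le n$, which you rightly call the real content but do not prove. Your list of shapes ends with ``and so on''. The reduction to ``at most $|J|+\varepsilon$ vertices'' is not a precise statement. What you need is that the vertices of $S_T\cup S_B$ exclude at least $|S_T\cup S_B|$ distinct vertices of $M$, that is, an injective charging of \emph{all} of $S_T\cup S_B$ into columns whose $m$-vertex is excluded. The compensation itself is announced rather than checked.

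In particular, you never locate the configurations where $n\ge 4$ is actually used:
\begin{itemize}
\item $S_T=\{t_{d-1},t_d,t_{d+1}\}$, $S_B=\{b_d\}$ with $d$ even;
\item its mirror image $S_T=\{t_d\}$, $S_B=\{b_{d-1},b_d,b_{d+1}\}$ with $d$ odd.
\end{itemize}
For $n=3$ and $d=2$ the first gives the independent set $\{t_1,t_2,t_3,b_2\}$ of size $4$. So this is precisely the case where your hoped-for compensation must use $n\ge 4$, and your plan does not identify it.

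The missing idea is to count per column. Since $t_i\sim m_i\sim b_i$, every column meets $S$ in at most two vertices, and in two only if $S\cap C_d=\{t_d,b_d\}$. If no column is doubled, $|S|\le n$ immediately. This already disposes of all your shapes without a doubled column, such as $\{t_a\},\{b_{a\pm1}\}$ or $\{t_{a-1},t_{a+1}\},\{b_a\}$.

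Now suppose $C_d$ is doubled and $d$ is even.
\begin{itemize}
\item Since $t_d\sim b_c$ for every $c\ne d$, we get $S\cap B=\{b_d\}$, so $C_d$ is the only doubled column.
\item $t_a\not\sim b_d$ forces $a\in\{d-1,d,d+1\}$.
\item $b_d$ excludes $m_{d-1},m_d,m_{d+1}$.
\end{itemize}
Hence $S\cap C_{d\pm1}\subseteq\{t_{d\pm1}\}$ and $S\cap C_{d\pm2}\subseteq\{m_{d\pm2}\}$.

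As $n\ge 4$, we have $d+2\le n$ or $d-2\ge 1$; take the first case. Either $t_{d+1}\notin S$, so $C_{d+1}$ is empty. Or $t_{d+1}\in S$, and it excludes $m_{d+2}$ because $d+1$ is odd, so $C_{d+2}$ is empty. Either way some column is empty, and $|S|\le n$. The second case, $d-2\ge 1$, is the same using $C_{d-1}$ and $C_{d-2}$.

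For $d$ odd the same argument works with $t$ and $b$ interchanged. This is exactly the compensation your plan needs, and it is how the paper proves the bound.
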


\begin{proof}
By definition, no edge of $F_n$ joins two vertices in the same row, so $T$,
$M$ and $B$ are independent sets; in particular, $\alpha(F_n)\ge n$.

Since the rows are independent, a triangle of $F_n$ would consist of vertices
$t_a$, $m_c$ and $b_e$. If $a$ is even, then $t_a\sim m_c$ forces $c=a$, and
then $m_a\sim b_e$ forces $e=a$; but $t_a\not\sim b_a$. If $a$ is odd, then
$t_a\sim b_e$ forces $|a-e|\ge 2$, and $t_a\sim m_c$ forces
$c\in\{a-1,a,a+1\}$. If $c=a$, then $m_a\sim b_e$ yields $|a-e|\le 1$, a
contradiction. If $c=a\pm 1$, then $c$ is even, so $m_c\sim b_e$ yields
$e=c$ and hence $|a-e|=1$, again a contradiction. Thus $F_n$ is
triangle-free.

Now let $S$ be an independent set of $F_n$. Since $t_i\sim m_i\sim b_i$, we
have $|S\cap C_i|\le 2$ for all $i$, with equality only if
$S\cap C_i=\{t_i,b_i\}$. If $|S\cap C_i|\le 1$ for all $i$, then $|S|\le n$.
It therefore suffices to show the following: if $S\cap C_d=\{t_d,b_d\}$ for
some $d$, then $|S\cap C_i|\le 1$ for all $i\ne d$, and $S\cap C_i=\emptyset$
for some $i$. Here and below, $C_i$ is understood to be empty if
$i\notin\{1,\dots,n\}$.

Suppose first that $d$ is even. As $t_d\sim b_c$ for all $c\ne d$, we have
$S\cap B=\{b_d\}$, and hence $|S\cap C_i|\le 1$ for all $i\ne d$. If
$t_a\in S$, then $t_a\not\sim b_d$, whence $a\in\{d-1,d,d+1\}$. Moreover,
$b_d\sim m_c$ for $c\in\{d-1,d,d+1\}$, as $d\pm 1$ is odd. Consequently,
\[
S\cap C_{d\pm 1}\subseteq\{t_{d\pm 1}\}\qquad\text{and}\qquad
S\cap C_{d\pm 2}\subseteq\{m_{d\pm 2}\}.
\]
As $n\ge 4$, we have $d+2\le n$ or $d-2\ge 1$. We treat the case $d+2\le n$;
in the other case, the same argument applies to $C_{d-1}$ and $C_{d-2}$. If
$t_{d+1}\notin S$, then $S\cap C_{d+1}=\emptyset$. If $t_{d+1}\in S$, then
$m_{d+2}\notin S$, since $d+1$ is odd and so $t_{d+1}\sim m_{d+2}$; thus
$S\cap C_{d+2}=\emptyset$.

If $d$ is odd, the same argument works with the roles of $t$ and $b$
interchanged: now $b_d\sim t_a$ for all $a\ne d$, so that $S\cap T=\{t_d\}$;
$t_d\not\sim b_c$ implies $c\in\{d-1,d,d+1\}$; $t_d\sim m_c$ for
$c\in\{d-1,d,d+1\}$; and $b_{d\pm 1}\sim m_{d\pm 2}$, as $d\pm 2$ is odd.
This completes the proof.
\end{proof}

\begin{proof}[Proof of Theorem E]
Let $n\ge 4$. By Lemma~\ref{lem:Fn}, the graph $F_n$ is triangle-free and
$3$-colorable, so by Lemma~\ref{lem:GKLNS} there exists a solvable group $G$
with $\overline{\Gamma}_G\cong F_n$. (For instance, one may take the group
constructed in the proof of \cite[Theorem~2.8]{GKLNS15} from the orientation
of $F_n$ in which all edges are directed from $T$ to $M\cup B$ and from $M$
to $B$; this orientation is then the Frobenius digraph of $G$ in the sense of
\cite{GKLNS15}.) Then $\rho(G)=3n$, and $m:=\sigma(G)\le\alpha(F_n)=n$ by
Lemmas~\ref{lem:alpha} and \ref{lem:Fn}. Let $C$ be a cyclic group whose
order is the product of $n-m$ distinct primes not dividing $|G|$. Then
$G\times C$ is solvable, $\sigma(G\times C)=\sigma(G)+n-m=n$ and
$\rho(G\times C)=3n+n-m\ge 3n$. Hence $\rho(n)\ge 3n$. The remaining
assertions follow from Table~1.
\end{proof}

\begin{remark}\label{rem:lower}
(a) If $G$ and $H$ are groups of coprime orders, then
$\overline{\Gamma}_{G\times H}$ is the disjoint union of
$\overline{\Gamma}_G$ and $\overline{\Gamma}_H$, and we have
$\sigma(G\times H)=\sigma(G)+\sigma(H)$ and $\rho(G\times H)=\rho(G)+\rho(H)$.
Correspondingly, the disjoint union of triangle-free $3$-colorable graphs is
triangle-free and $3$-colorable, and independence numbers add up under
disjoint unions. Since every $n\ge 4$ is a sum of elements of
$\{4,5,6,7\}$, Theorem E could therefore also be derived from the cases
$n\in\{4,5,6,7\}$ alone, by taking disjoint unions of the graphs $F_4,\dots,F_7$,
or equivalently direct products of groups of coprime orders. This is the
strategy suggested in \cite[\S3]{BKT23}, where the case $n=4$ is covered by
the groups of \cite[Example~1.9]{Ke94}.

(b) For $n\le 3$ we have $\alpha(F_n)>n$; for instance, $\{t_1,t_2,t_3,b_2\}$
is an independent set of $F_3$. In fact, for $n\le 3$ there is no
triangle-free graph with $3n$ vertices and independence number $n$, since the
Ramsey numbers $R(3,2)=3$, $R(3,3)=6$ and $R(3,4)=9$ (see \cite{GG55}) satisfy
$R(3,n+1)\le 3n$ for $n\le 3$. In view of Lemmas~\ref{lem:GKLNS} and
\ref{lem:alpha}, this is in accordance with the values $\rho(n)=3n-1$ for
$n\le 3$ in Table~1.
\end{remark}

\begin{remark}\label{rem:W}
The graphs $F_n$ are closely related to the graphs $W_k$ studied by Hovey,
Keller, Kou, Liu, Sivaraman and Wu in \cite{HKKLSW}. For $n\ge 3$ and
$k=2n-2$, the graph $W_k$ of \cite[Definition~6.1]{HKKLSW} has the vertices
$(1,i)$ for odd $i$, $(3,i)$ for even $i$, and $(2,i)$ for $1<i<n$, where
$1\le i\le n$; two vertices $(r,i)$ and $(s,j)$ are adjacent if and only if
either $|r-s|=1$ and $|i-j|\le 1$, or $\{r,s\}=\{1,3\}$ and $|i-j|\ge 2$. By
\cite[Lemma~6.6 and Theorem~6.2]{HKKLSW}, for every even $k\ge 18$ the graph $W_k$ has a
unique $3$-coloring up to permutation of the colors, and it is the complement
of a superminimal prime graph in the sense of \cite{HKKLSW}.

Identifying $(1,i)$, $(2,i)$ and $(3,i)$ with $t_i$, $m_i$ and $b_i$,
respectively, we see that $W_{2n-2}$ is the subgraph of $F_n$ induced on the
set
\[
\{t_i\mid i\ \text{odd}\}\cup\{b_i\mid i\ \text{even}\}\cup\{m_i\mid 1<i<n\}.
\]
Indeed, by the description of the edges of $F_n$ given before
Lemma~\ref{lem:Fn}, for odd $a$ and even $e$ we have $t_a\sim m_c$ if and only
if $|a-c|\le 1$, $m_c\sim b_e$ if and only if $|c-e|\le 1$, and $t_a\sim b_e$
if and only if $|a-e|\ge 2$; moreover, in both graphs no edge joins two
vertices in the same row. Note, however, that the three rows of $W_{2n-2}$
have $\lceil n/2\rceil$, $n-2$ and $\lfloor n/2\rfloor$ vertices,
respectively, whereas each row of $F_n$ has exactly $n$ vertices; it is this
property of $F_n$, together with Lemma~\ref{lem:Fn}, that is needed for
Theorem E.
\end{remark}

\section{Further remarks}\label{sec:further}

We now turn to arbitrary finite groups. The following estimate is implicit
in the proof of \cite[Theorem~3.3]{HY21}, which in turn follows the proof of
\cite[Theorem~A]{Mo06}.

\begin{proposition}\label{prop:HY}
Let $n\in\mathbb N$, let $\kappa_n\ge\max_{m\le n}\rho(m)$, and let $\xi_n$ be
an upper bound for $\rho(T)$, where $T$ ranges over the direct products of
nonabelian simple groups with $\sigma(T)\le n$. Then
\[
\rho(\Gamma)\le\kappa_n\bigl(\xi_n+\kappa_n+1\bigr)
\]
for every group $\Gamma$ with $\sigma(\Gamma)\le n$.
\end{proposition}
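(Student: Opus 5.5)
We would follow the structure of the proofs of \cite[Theorem~A]{Mo06} and \cite[Theorem~3.3]{HY21}, splitting $\Gamma$ along its solvable radical. Let $R$ be the solvable radical of $\Gamma$. Then $\sigma(R)\le\sigma(\Gamma)\le n$, so $\rho(R)\le\max_{m\le n}\rho(m)\le\kappa_n$. The factor group $\overline\Gamma:=\Gamma/R$ has trivial solvable radical, so its socle $N/R$ is a direct product $T$ of nonabelian simple groups, and $\overline\Gamma/(N/R)$ embeds in $\mathrm{Out}(T)$. Since element orders in quotients divide element orders in $\Gamma$ (a preimage of $\bar g$ has order a multiple of $o(\bar g)$), we get $\sigma(T)\le n$, and hence $\rho(T)\le\xi_n$.

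The remaining primes divide $|\overline\Gamma:N/R|$, and we must control them. Write $T=T_1^{a_1}\times\cdots\times T_k^{a_k}$ with pairwise nonisomorphic simple $T_i$. Then $\mathrm{Out}(T)=\prod_i \mathrm{Out}(T_i)\wr S_{a_i}$, and $\overline\Gamma/(N/R)$ has a normal subgroup $B$ (the intersection with the base groups $\prod\mathrm{Out}(T_i)^{a_i}$) with quotient a permutation group $P\le\prod S_{a_i}$.

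For $B$, Schreier's conjecture (which is where CFSG enters, via the cited papers) says each $\mathrm{Out}(T_i)$ is solvable. Hence $B$ is solvable with $\sigma(B)\le n$, and so $\rho(B)\le\kappa_n$.

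For $P$, the key point is that $\sigma$ bounds the number of orbits one can ``fill''. Following \cite{Mo06}, the primes dividing $|P|$ that are not already in $\vp(T)$ are large: any prime $q$ dividing $|P|$ forces some $a_i\ge q$. One then needs an element of $\Gamma$ whose order is divisible by many such primes simultaneously. Cycles on disjoint blocks of coordinates, combined with suitable elements of $T$ via diagonal/centralizer arguments, let us absorb primes into a single element. This gives that the primes in $\vp(P)\setminus\vp(T)$ number at most about $\kappa_n\xi_n+1$; more precisely, in \cite{HY21} the bound arises from counting the prime divisors of $\prod a_i!$ against the element-order constraint. Summing the contributions gives
\[
\rho(\Gamma)\le\rho(R)+\rho(T)+\rho(B)+|\vp(P)\setminus\vp(T)|\le\kappa_n(\xi_n+\kappa_n+1),
\]
after checking that the bookkeeping reproduces this product form. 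The bound is generous, since $\kappa_n\ge 2$.

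The main obstacle is the permutation part $P$. We have to produce elements of $\Gamma$ (not merely of $\overline\Gamma$) carrying many large primes from $P$ together with primes of $T$. Here the argument of \cite[Theorem~3.3]{HY21} should be transcribed essentially verbatim, with their specific numerical values replaced by $\kappa_n$ and $\xi_n$. We would first try to bound $|\vp(P)|$ by $\kappa_n\xi_n$ through a solvable-subgroup argument: take a solvable subgroup of $P$ containing a Sylow subgroup for each relevant prime (via elements of prime order generating cyclic groups acting on disjoint coordinate sets), and apply the definition of $\kappa_n$.
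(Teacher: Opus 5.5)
\textbf{There is a genuine gap: the quotient above the socle is never controlled.} Your decomposition disposes of $R$, $T=N/R$ and the base part $B$. The part that carries all the difficulty is left unproved, and the route you sketch for it does not work. Your $P$ is a section of $\Gamma$ about which nothing is known except $\sigma(P)\le n$. It can be an arbitrary transitive group, even one with a large nonabelian socle: for $\Gamma=A_5\wr X$ one gets $B=1$ and $P\cong X$. So bounding $|\vp(P)|$ is the original problem again. Cycles on disjoint blocks combined with diagonal elements of $T$ give no estimate of the form $|\vp(P)\setminus\vp(T)|\lesssim\kappa_n\xi_n+1$. Even granting such an estimate, your sum is at most $\kappa_n(\xi_n+\kappa_n+1)$ only if $\xi_n+\kappa_n+1\le\kappa_n^2$, which need not hold. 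Recursing on $\Gamma/N$ does not close the argument either. Each radical/socle step may cost $\kappa_n+\xi_n$ primes, and the number of steps is not bounded in terms of $\sigma$. For example, the groups $W_1=A_5$, $W_{k+1}=A_5\wr W_k$ (for a faithful transitive action of $W_k$) have $\rho(W_k)=3$ but $k$ nonsolvable layers.

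\textbf{The missing idea} is the one the paper's proof rests on, following Moret\'o. Run the decomposition all the way up,
$1=S_0\le R_1<S_1<\dots<R_m<S_m\le R_{m+1}=\Gamma$,
so that every factor has at most $\kappa_n$ or at most $\xi_n$ primes. Then bound the number of layers that contribute new primes by $\kappa_n$, using a \emph{single} solvable subgroup of $\Gamma$.

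Concretely, call $i$ essential if some prime $q_i$ divides $|S_i/S_{i-1}|$ but not $|\Gamma/S_i|$. The other layers add no primes, and the $q_i$ are pairwise distinct. For the essential indices $i_1<\dots<i_r$, repeated Frattini arguments work as follows. Take $P_1\in\Syl_{q_{i_1}}(\Gamma)$ and $N_1:=N_\Gamma(P_1)$, so $\Gamma=S_{i_1}N_1$. Then take $1\ne P_2\in\Syl_{q_{i_2}}(N_1\cap S_{i_2})$ and $N_2:=N_{N_1}(P_2)$, so $\Gamma=S_{i_2}N_2$, and so on. Each $P_l$ normalizes all $P_k$ with $k<l$, so $H:=P_1\cdots P_r$ is a solvable subgroup with $\rho(H)=r$ and $\sigma(H)\le n$. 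Hence $r\le\kappa_n$, and
$\rho(\Gamma)\le\kappa_n+r(\xi_n+\kappa_n)\le\kappa_n(\xi_n+\kappa_n+1)$.
This is where the product form comes from, and it uses neither Schreier's conjecture nor the structure of $\mathrm{Out}(T)$.

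One caution about the paper's own wording: it takes a prime $q_i\mid|S_i/R_i|$ with $q_i\nmid|\Gamma/S_i|\,|R_i|$ for \emph{every} $i$. Such a prime need not exist, e.g.\ for $i=1$ in $A_5\wr S_7$. Moreover, the groups $W_k$ show that $m$ itself is unbounded. So the restriction to essential layers is genuinely needed.
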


\begin{proof}
As the order of an element of a section of $\Gamma$ divides the order of each
of its preimages, $\sigma(\Lambda)\le\sigma(\Gamma)\le n$ for every section
$\Lambda$ of $\Gamma$. Hence $\rho(\Lambda)\le\kappa_n$ for every solvable
section $\Lambda$ of $\Gamma$, and $\rho(T)\le\xi_n$ for every section $T$ of
$\Gamma$ which is a direct product of nonabelian simple groups. In particular,
we may assume that $\Gamma$ is not solvable. As in \cite[proof of
Theorem~A]{Mo06}, we consider the series
\[
1=S_0\le R_1<S_1<R_2<S_2<\dots<R_m<S_m\le R_{m+1}=\Gamma ,
\]
where $R_{i+1}/S_i$ is the largest solvable normal subgroup of $\Gamma/S_i$
for $i\ge 0$, and $S_i/R_i$ is the socle of $\Gamma/R_i$ for $i\ge 1$; here
$m\ge 1$, and as $\Gamma/R_i$ has no nontrivial solvable normal subgroup,
every $S_i/R_i$ is a direct product of nonabelian simple groups. Moret\'o
shows that for every $i$ there is a prime $q_i$ dividing $|S_i/R_i|$ but not
$|\Gamma/S_i|\,|R_i|$, and he constructs a solvable subgroup $H$ of $\Gamma$
whose order is divisible by $q_1,\dots,q_m$ (see \cite[proof of
Theorem~A]{Mo06} and \cite[proof of Theorem~3.3]{HY21}). The primes $q_i$ are
pairwise distinct, since $q_j$ divides $|S_j/R_j|$, and hence $|\Gamma/S_i|$,
for $j>i$. Thus $m\le\rho(H)\le\kappa_n$. As every prime divisor of
$|\Gamma|$ divides the order of a factor of the series above, we conclude that
\[
\rho(\Gamma)\le\sum_{i=1}^m\bigl(\rho(S_i/R_i)+\rho(R_i/S_{i-1})\bigr)
+\rho(\Gamma/S_m)\le m(\xi_n+\kappa_n)+\kappa_n
\le\kappa_n\bigl(\xi_n+\kappa_n+1\bigr). \qedhere
\]
\end{proof}

With $\kappa_n=7n$ and $\xi_n=28n^3$, Proposition~\ref{prop:HY} is the
estimate obtained in the proof of \cite[Theorem~3.3]{HY21}, and with
$\xi_n=28n^3$ it is \cite[(13)]{BKT23}; here $\xi_n=28n^3$ is admissible by
\cite[Theorem~3.2]{HY21}, which states that $\rho(T)<28\,\sigma(T)^3$ for every
direct product $T$ of nonabelian simple groups. For simple groups $T$, the
arguments of \cite{HY21} would in fact give $\rho(T)\le 8(\sigma(T)+1)^3+1$,
provided that the inequalities derived from \cite[Lemma~2.2]{HY21} hold for
the families of sets to which this lemma is applied (see
Remark~\ref{rem:HY}): for
the groups of Lie type, the proofs of \cite[Lemmas~2.4--2.6 and
Theorem~2.7]{HY21} yield one of the bounds $2+2\sigma(T)$,
$1+8\sigma(T)^3$, $1+8(\sigma(T)+1)^3$ and $18+17\sigma(T)$; the alternating
groups satisfy $\rho(A_k)\le 13\,\sigma(A_k)^2$ by \cite[Lemma~3.1]{HY21}; and
$\rho(T)\le 15$ for the sporadic groups and the Tits group. Under the
hypothesis of Corollary D we may therefore take $\xi_n=8(n+1)^3+1$, and
Proposition~\ref{prop:HY} yields
\begin{equation}\label{eq:HY}
\rho(\Gamma)\le\kappa_n\bigl(8(n+1)^3+\kappa_n+2\bigr)
\qquad\text{whenever }\sigma(\Gamma)\le n .
\end{equation}

\begin{remark}\label{rem:HY}
We have not been able to verify two steps in the proof of
\cite[Theorem~3.2]{HY21}, and therefore state Corollary D conditionally.

(a) The combinatorial \cite[Lemma~2.2]{HY21}, which is used for the classical
groups in \cite[Lemmas~2.4--2.6]{HY21}, is not correct as stated: for $n=2$
and the sets $A_1=\{x\}$, $A_2=\{y\}$ with $x\ne y$, its left hand side equals
$1$, while its right hand side equals $2^{1/3}$. (Its reduction to
\cite[Lemma~2.1]{HY21} requires the indices $a_1,\dots,a_k$ to be distinct,
whereas the proof of \cite[Lemma~2.1]{HY21} uses repeated indices.) The
conclusion of \cite[Lemma~2.2]{HY21} also fails for some of the families of
sets to which it is applied: for $S=\mathrm{PSL}_3(2)$, the inequality
$\rho(S)\le 1+\sigma(\mathrm{GL}_3(2))^3$ obtained in the proof of
\cite[Lemma~2.4]{HY21} is false, as $\rho(S)=3$ and
$\sigma(\mathrm{GL}_3(2))=\sigma(S)=1$. The final bound
$\rho(S)\le 1+8\sigma(S)^3$ holds in this case, and we do not know of a simple
group violating the bounds stated in \cite{HY21}.

(b) The case of direct products in \cite[Theorem~3.2]{HY21} is reduced to the
case of simple groups without further argument. This reduction is not
automatic: for $T=T_1\times\dots\times T_r$, the primes of $|T_j|$ which occur
in elements of $T_j$ with many prime divisors may also divide $|T_l|$ for
$l\ne j$, so that $\sigma(T)$ may be smaller than the sum of the
$\sigma(T_j)$. Combining the bounds for the factors directly only gives a
bound for $\rho(T)$ of order $\sigma(T)^4$. The socles $S_i/R_i$ in the proof
of Proposition~\ref{prop:HY} are such direct products, so this case is
needed.

Both points affect \cite[Theorem~1.1]{HY21} and \cite[Corollary~1]{BKT23} in
the same way as Corollary D. Unconditionally, Proposition~\ref{prop:HY}
reduces the problem of bounding $\rho_g(n)$ to that of bounding $\rho(T)$ for
direct products $T$ of nonabelian simple groups in terms of $\sigma(T)$.
\end{remark}

We put $\Theta_n:=\kappa_n\bigl(8(n+1)^3+\kappa_n+2\bigr)n^{-4}$, and we choose $\kappa_n=\rho(n)$ for $2\le n\le 4$ (see Table~1), $\kappa_5=18$ (by \cite{Ke95b}), $\kappa_n$ as in Theorem B for $6\le n\le 17$, and $\kappa_n=4n-1+\pi(2n)$ for $n\ge 18$ (by Theorem A); these values are increasing in $n$. This gives the following values.

\begin{center}
	\begin{tabular}{c|ccccccccccc}
		\hline $n$ & 2 & 3 & 4 & 5 & 6 & 7 & 8 & 9 & 10 & 11 & 12\\ \hline
		$\kappa_n$ & 5 & 8 & 12 & 18 & 24 & 29 & 33 & 38 & 43 & 48 & 54\\
		$\Theta_n\le$ & 69.7 & 51.6 & 47.6 & 50.4 & 51.3 & 49.9 & 47.3 & 46.6 & 46.0 & 45.5 & 46.0\\ \hline
	\end{tabular}
	
	\medskip
	Table 4. Values of $\Theta_n=\kappa_n\bigl(8(n+1)^3+\kappa_n+2\bigr)n^{-4}$, rounded up.
\end{center}

\begin{proof}[Proof of Corollary D]
For $n=1$ the assertion holds unconditionally, since $\rho_g(1)=4$ by
\cite{Hi57} and \cite{Br81}. Let $n\ge 2$, and assume the hypothesis of
Corollary D. Then \eqref{eq:HY} yields $\rho_g(n)\le\Theta_n n^4$,
and $\Theta_n<69.7$ for $2\le n\le 7$ by Table~4. For $n\ge 8$ we have
$\kappa_n\le\frac{101}{22}n$ by the proof of Corollary C, and therefore
\[
\Theta_n\le\frac{101}{22}\Bigl(8\Bigl(1+\frac1n\Bigr)^3+\frac{101}{22n^2}
+\frac{2}{n^3}\Bigr)
\le\frac{101}{22}\Bigl(8\cdot\Bigl(\frac98\Bigr)^3+\frac{101}{1408}
+\frac{1}{256}\Bigr)<52.7 .
\]
Hence $\rho_g(n)\le 70\,n^4$ for all $n\in\mathbb N$.
\end{proof}

Note that the maximum in Table~4 is attained for $n=2$; for $n\ge 8$, the
values of $\Theta_n$ are smaller than $52.7$.

\begin{remark}\label{rem:range}
Let $17\le n\le 43$ or $n\in\{45,55,57\}$. For these $n$, Theorems A and B
do not yield $\rho(n)\le\frac92 n$; they yield
\[
\rho(n)\le\Bigl\lfloor\frac92 n\Bigr\rfloor+\delta_n,\qquad\text{where }
\delta_n\in\{1,2\},
\]
and $\delta_n=2$ exactly for $n\in\{19,21,22,23,24,25,27,31,37\}$. As $\pi(2n)-\lfloor\frac n2\rfloor\le 3$
for all $n$, \eqref{eq:reduction} shows that the bound $\rho(n)\le\frac92n$
would follow for these $n$ if one could show that $\rho(G)\le 4n-3$ for every
$\sigma$-reduced group $G\in\cG_{2n}$ with $\sigma(G)=n$. Since
$\rho(G)\le 4n-1-\lambda_0-\lambda_1$ by Proposition~\ref{prop:key2}, it would
suffice to treat the case $\lambda_0+\lambda_1\le 1$, in which
$\dl(G)\le n+2$ by Lemma~\ref{lem:dl}. In this situation, the computation of
\S\ref{sec:small} with the smallest prime $p>2n$ uses all the tools at hand:
by Proposition~\ref{prop:key}, Lemma~\ref{lem:chain} applies to every
sequence, and the ranks are bounded by Lemma~\ref{lem:ranks} and, if
$\lambda_0=0$, by Lemma~\ref{lem:rank3}. For $n=17$ this yields Theorem B,
but for $n=18$ it gives no improvement on Theorem A
(Remark~\ref{rem:small}). Further progress therefore requires additional
structural information on the groups with $\lambda_0+\lambda_1\le 1$, such as
an extension of Lemma~\ref{lem:rank3} to the case $n_0=n-1$ (see
Remark~\ref{rem:rank3}).
\end{remark}

By \cite{Ke95c}, $\rho(n)\le(4+\varepsilon)n$ for every $\varepsilon>0$ and
all sufficiently large $n$, and Theorem A gives $\rho(n)\le 4n-1+\pi(2n)$
for all $n$. This naturally raises the following questions.

\begin{question}\label{q:range}
Is it true that $\rho(n)\le\frac92 n$ for every $n\in\mathbb N$? By
Theorems A and B, only the values $17\le n\le 43$ and $n\in\{45,55,57\}$
remain open.
\end{question}

\begin{question}\label{q:Gn}
Let $G$ be a $\sigma$-reduced group with $\sigma(G)=n$ such that every prime
divisor of $|G|$ is greater than $n$. Is it true that
$\rho(G'')=\nu_2+\nu_3\le 2n-1$, as in Proposition~\ref{prop:key2}? By
\eqref{eq:reduction}, a positive answer would yield
$\rho(n)\le 4n-1+\pi(n)$ for all $n$, and hence $\rho(n)\le\frac92 n$ for
all $n$, since $\pi(n)\le\frac n2+1$ (all primes but $2$ are odd).
\end{question}

\begin{question}\label{q:4n}
Is it true that $\rho(n)\le 4n$ for every $n\in\mathbb N$?
\end{question}

We recall that the conjectured optimal bound is $\rho(n)\le 3n$, in analogy
with Huppert's $\rho$--$\sigma$ conjecture for character degrees. By
Theorem E, this bound would be best possible for every $n\ge 4$.

\bigskip
\noindent\textbf{Acknowledgements.} We are grateful to Chiara Bellotti and
Timothy Trudgian for helpful discussions concerning the proof in
\cite{BKT23}, and to Benjamin Keller for first constructing the graphs $F_5$, $F_6$, and $F_7$. The second author was supported by a grant from the Simons Foundation (SFI-MPS-TSM-00014153 to Thomas M. Keller).

\bigskip
\noindent\textbf{AI statement.}
This project was started almost two years ago when the first author noticed the mistake in \cite{BKT23}. The results in the paper are the authors', but we used Claude by Anthropic to synthesize various bits and pieces into one homogeneous paper and to help with the computational work;
along the way Claude also noticed a couple of inaccuracies and fixed them. After using this tool, the authors reviewed and edited the content as needed and take full responsibility for the final mathematical and narrative integrity of the paper.

\bigskip
\noindent Enrico Jabara\\
Free Researcher\\
Castello 5135, 30122 Venezia, Italy\\
E-mail: \texttt{jabaraenrico@gmail.com}

\medskip
\noindent Thomas Michael Keller\\
Department of Mathematics, Texas State University\\
601 University Drive, San Marcos, TX 78666, USA\\
E-mail: \texttt{keller@txstate.edu}


\begin{thebibliography}{99}

\bibitem{BKT23}
C.~Bellotti, T.~M.~Keller and T.~S.~Trudgian, New bounds for numbers of
primes in element orders of finite groups, \emph{Math. Nachr.} \textbf{296}
(2023), no.~11, 5227--5231.

\bibitem{Br81}
R.~Brandl, Finite groups all of whose elements are of prime power order,
\emph{Boll. Un. Mat. Ital. A (5)} \textbf{18} (1981), no.~3, 491--493.

\bibitem{GG55}
R.~E.~Greenwood and A.~M.~Gleason, Combinatorial relations and chromatic
graphs, \emph{Canad. J. Math.} \textbf{7} (1955), 1--7.

\bibitem{GKLNS15}
A.~Gruber, T.~M.~Keller, M.~L.~Lewis, K.~Naughton and B.~Strasser, A
characterization of the prime graphs of solvable groups, \emph{J. Algebra}
\textbf{442} (2015), 397--422.

\bibitem{Hi57}
G.~Higman, Finite groups in which every element has prime power order,
\emph{J. London Math. Soc.} \textbf{32} (1957), 335--342.

\bibitem{HKKLSW}
D.~Hovey, T.~M.~Keller, J.~Kou, E.~Liu, R.~Sivaraman and E.~Wu, Properties and
families of minimal prime graphs, \emph{Ars Math. Contemp.}, to appear,
doi:10.26493/1855-3974.3553.b20.

\bibitem{HY21}
N.~N.~Hung and Y.~Yang, On the prime divisors of element orders,
\emph{Math. Nachr.} \textbf{294} (2021), no.~10, 1905--1911.

\bibitem{Hu67}
B.~Huppert, \emph{Endliche Gruppen I}, Springer-Verlag, Berlin, Heidelberg,
New York, 1967.

\bibitem{Ke94}
T.~M.~Keller, Solvable groups with a small number of prime divisors in the
element orders, \emph{J. Algebra} \textbf{170} (1994), no.~2, 625--648.

\bibitem{Ke95a}
T.~M.~Keller, On the asymptotic behaviour of $\rho(n)$, \emph{J. Algebra}
\textbf{174} (1995), no.~2, 587--598.

\bibitem{Ke95b}
T.~M.~Keller, Solvable groups with at most four prime divisors in the element
orders, \emph{J. Algebra} \textbf{175} (1995), no.~1, 1--23.

\bibitem{Ke95c}
T.~M.~Keller, A linear bound for $\rho(n)$, \emph{J. Algebra} \textbf{178}
(1995), no.~2, 643--652.

\bibitem{Mo06}
A.~Moret\'o, On the number of different prime divisors of element orders,
\emph{Proc. Amer. Math. Soc.} \textbf{134} (2006), no.~3, 617--619.

\bibitem{Mo21}
A.~Moret\'o, Huppert's conjecture for character codegrees, \emph{Math.
Nachr.} \textbf{294} (2021), no.~11, 2232--2236.

\bibitem{RS62}
J.~B.~Rosser and L.~Schoenfeld, Approximate formulas for some functions of
prime numbers, \emph{Illinois J. Math.} \textbf{6} (1962), 64--94.

\bibitem{Th59}
J.~G.~Thompson, Finite groups with fixed-point-free automorphisms of prime
order, \emph{Proc. Nat. Acad. Sci. U.S.A.} \textbf{45} (1959), 578--581.

\bibitem{Tu84}
A.~Turull, Fitting height of groups and of fixed points, \emph{J. Algebra}
\textbf{86} (1984), no.~2, 555--566.

\bibitem{Ya21}
Y.~Yang, On analogues of Huppert's conjecture, \emph{Bull. Aust. Math. Soc.}
\textbf{104} (2021), no.~2, 272--277.

\bibitem{Zh95}
J.~P.~Zhang, Arithmetical conditions on element orders and group structure,
\emph{Proc. Amer. Math. Soc.} \textbf{123} (1995), no.~1, 39--44.

\end{thebibliography}
\end{document}